\documentclass[10pt,leqno,a4paper]{amsart}
\usepackage[OT2,T1]{fontenc}
\usepackage{amsfonts, amssymb, amsmath, amsthm, latexsym, enumerate, epsfig, color, mathrsfs, fancyhdr, pdfsync, eurosym, endnotes, stmaryrd, bm, bbm}
\usepackage[all]{xy}

\usepackage{graphicx}
\usepackage{epstopdf}
\usepackage{setspace}
\usepackage{hyperref}
\usepackage{pdfsync}
\definecolor{labelkey}{rgb}{0,1,0}
\usepackage{caption}
\usepackage{amsmath,amscd,amssymb}
\usepackage[english]{babel}
\usepackage[inner=3cm,outer=3cm,bottom=5cm]{geometry}
\usepackage{setspace}
\usepackage{mathrsfs}
\usepackage[all]{xy}
\usepackage{tikz-cd}

\doublespacing

\newtheorem{thm}{Theorem}[section]
\newtheorem{cor}[thm]{Corollary}
\newtheorem{lemma}[thm]{Lemma}

\newtheorem{defn}[thm]{Definition}

\theoremstyle{remark}

\theoremstyle{definition}

\newtheorem{rmk}[thm]{Remark}
\newtheorem{exa}[thm]{Example}

\numberwithin{equation}{thm}

\def\beq{\begin{equation}}
\def\eeq{\end{equation}}

\def\ben{\begin{enumerate}}
\def\een{\end{enumerate}}


\usepackage[OT2,T1]{fontenc}
\DeclareSymbolFont{cyrletters}{OT2}{wncyr10}{m}{n}
\SetSymbolFont{cyrletters}{bold}{OT2}{cmr}{bx}{n}
\DeclareMathSymbol{\Dc}{\mathalpha}{cyrletters}{68}

\def\crash#1{}
\def\N{{\mathbb N}}
\def\Z{{\mathbb Z}}
\def\P{{\mathbb P}}
\def\Q{{\mathbb Q}}
\def\R{{\mathbb R}}
\def\C{{\mathbb C}}
\def\A{{\mathbb A}}

\def\H{{\mathbb H}}

\def\l{\left}
\def\r{\right}
\def\[[{\l[\l[}
\def\]]{\r]\r]}

\def\rk{{\rm rk}\;}

\def\ie{\emph{i.e.}\;}
\def\lc{\emph{loc.cit.}\;}

\def\cA{{\mathcal A}}

\def\cM{{\mathcal M}}

\def\cO{{\mathcal O}}

\def\cS{{\mathcal S}}
\def\cT{{\mathcal T}}

\def\sH{{\mathscr H}}

\def\fC{{\mathfrak C}}

\def\fU{{\mathfrak U}}

\def\fs{{\mathfrak s}}
\def\fri{{\mathfrak i}}
\def\wtilde{\widetilde}

\def\Gal{{\rm Gal\,}}

\def\id{{\rm id\,}}

\def\iso{\xrightarrow{\ \sim\ }}

\def\trdeg{{\text{tr.deg}}}


\def\eps{\epsilon}
\def\vt{{\vec{t}}}
\def\vv{{\vec{v}}}

\def\pf{{\mathbbm{p}}}
\def\b{{\mathbbm{b}}}

\def\sco{{\cO^{_S}}}
\def\tco{{\cO^{_T}}}

\def\sd{{D^{_S}}}
\def\td{{D^{_T}}}

\def\ta{{A^{_T}}}
\newcommand{\norneq}{\mathrel{\ooalign{$\lneq$\cr\raise.22ex\hbox{$\lhd$}\cr}}}

\setcounter{section}{-1}

\begin{document}
\setlength{\baselineskip}{0.55cm}	
\title[Canonical factorizations]{Canonical factorizations of morphisms of Berkovich curves}
\author{Velibor Bojkovi\'c}
\address{Laboratoire de Math\'ematiques Nicolas Oresme, Caen, France}
\email{velibor.bojkovic@unicaen.fr}

\keywords{Herbrand extensions, Factorization of morphisms, Berkovich curves, harmonicity properties of morphisms}

\begin{abstract}
We prove that, for certain extensions of valued fields which admit a sensible theory of ramification groups, there exist canonical towers that correspond to the break-points of their Herbrand function. In particular, each of the intermediate field extensions in the tower has a Herbrand function with only one break-point and there is at most one extension with trivial Herbrand function.

We apply the result to the setting of finite morphisms of Berkovich curves where we prove the existence of canonical local and global factorization of such morphisms according to their metric properties. 

Finally, we use the canonical factorizations to prove harmonicity properties finite morphisms satisfy at each type  2 point: formulas that can be regarded as a refinement of the Riemann-Hurwitz formula for such morphisms.
\end{abstract}
\maketitle
\tableofcontents

\section{Introduction}
The present article is a result of an attempt to explore the deep relation between ramification properties of finite extensions of certain valued fields and the metric properties of finite morphisms of analytic curves where such fields appear naturally as the (completed) residue fields of their points, a relation that was recently discovered by M. Temkin in \cite{TeMu}.

To say more, let $k$ be a complete, algebraically closed, complete with respect to a non-trivial and non-archimedean valuation, and (for simplicity) of characteristic 0 (\emph{e.g.} the field $\C_p$). We recall that in the Berkovich approach to analytic geometry over $k$ (which we adopt in this paper), the role of classic Riemann surfaces is taken by quasi-smooth $k$-analytic curves, which loosely can be described by the fact that every $k$-point has a neighborhood isomorphic to a Berkovich open disc over $k$ (we note that all smooth projective $k$-analytic curves, i.e. the analytifications of smooth projective $k$-algebraic curves are quasi-smooth). The structure of such curves is most easily described by the semistable reduction theorem, or better to say, its $k$-analytic avatar, the semistable skeleton. Namely, if $X$ is a quasi-smooth $k$-analytic curve, a semistable skeleton of $X$ is a closed subset $\Gamma$ which, roughly speaking, has a structure of a topological graph and is such that  $X\setminus\Gamma$ is a disjoint union of open discs. For a comprehensive study of quasi-smooth $k$-analytic curves and proof that they admit semistable skeleta one may refer to \cite{Duc-book}.

If now $f:Y\to X$ is a finite morphism of quasi-smooth $k$-analytic curves, it follows from (a suitable interpretation of ) a result of R. Coleman \cite{Col03} that $f$ admits a simultaneous semistable reduction, or that it has a semistable skeleton, meaning that there exists semistable  skeleta $\Gamma_Y$ and $\Gamma_X$ of $Y$ and $X$, respectively, such that $\Gamma_Y=f^{-1}(\Gamma_X)$. As a consequence, for each connected component/disc $D$ in $Y\setminus\Gamma_Y$, the restriction $f_{|D}$ is a finite morphism of open disc and $f(D)$ is a connected component of $X\setminus \Gamma_X$. 

A more subtle notion of a skeleton of a morphism $f$, the so-called radializing skeleton, was introduced in \cite{TeMu}. Following \lc we say that a finite morphism of open unit discs $g:D_1\to D_2$ is radial if for any choice of coordinates $T$ and $S$ on $D_1$ and $D_2$ respectively ( still identifying them with unit discs) and such that $T=0$ is sent to $S=0$ (we say that $T$ and $S$ are compatible), the valuation polygon of the corresponding expansion of $f$ in coordinates $T$ and $S$ does not depend on $T$ and $S$. Then, if $T$ and $S$ are one such pair of coordinates, and if $S=g(T)=\sum_{i=1}^\infty g_i\cdot T^i$ is the coordinate representation of the morphism $f$, then if $f$ is radial the function 
$$
[0,1]\ni r\mapsto\max_{i=1,..,\infty}\{|g_i|\cdot r^i\}\in [0,1] 
$$
does not depend on the compatible coordinates $T$ and $S$ and is called the profile of $f$. We denote it by $\pf_f$. One may notice that the profile is nothing but the multiplicative version of the classical valuation polygon of $g(T)$ and as such enjoys similar properties. For example, since $f$ is a finite morphism the function $g(T)$ has finitely many zeroes and consequently its valuation polygon has finitely many break-points. Then, one deduces that the profile of $f$ has finitely many break-points as well, where the latter are interpreted as the points in $(0,1)$ where the profile is not smooth. One further shows that on the intervals $I\subseteq [0,1]$ that do not contain the break-points, the profile is of the form $r\mapsto a\cdot r^{p^{\alpha}}$, for $r\in I$, where $p$ is the residual characteristic of $k$, $\alpha$ is a non-negative integer and $a\in |k^*|$. We will refer to $p^{\alpha}$ as the local degree of $\pf_{f}$ (over $I$). 

More generally, if $\Gamma=(\Gamma_Y,\Gamma_X)$ is a skeleton of $f$, we say that $f$ is radial with respect to $\Gamma$ (or that $\Gamma$ is radializing for $f$) if for any connected component (open disc) $D\in Y\setminus \Gamma_Y$, the morphism $f_{|D}:D\to f(D)$ is a radial morphism of open unit discs and the profile of $f_{|D}$ depends only on the point on $\Gamma_Y$ to which $D$ is attached (that is, the point in the closure of $D$ in $Y$ that does not belong to $D$). Two remarkable results have been proved in \cite{TeMu}. The first one is that, for any finite morphism $f:Y\to X$ of quasi-smooth $k$-analytic curves, and $\Gamma=(\Gamma_Y,\Gamma_X)$ a semistable skeleton of $f$, there exists a semistable skeleton $(\Gamma_Y',\Gamma_X')$ such that $\Gamma_Y\subset\Gamma_Y'$ and $\Gamma_X\subset\Gamma_X'$, and such that $f$ is radial with respect to $(\Gamma_Y',\Gamma_X')$. In particular, since we may find $\Gamma_Y$ to contain any type 2 point $y$ in $Y$ (in Berkovich classification of points in a quasi-smooth $k$-analytic curve), one can assign to $y$ a profile function $\pf_{f,y}$ which is nothing but the profile of $f_{|D}$, where $D$ is any disc in $Y\setminus \Gamma_Y'$ that is attached to $y$. For the second result, M. Temkin introduces the \emph{almost monogeneous} fields (see Section \ref{sec: almost monogeneous} for the precise definition), for which he constructs a reasonable theory of ramification groups. Namely, if $L/K$ is a finite Galois extension of almost monogeneous fields with Galois group $G$, then one introduces the intertia function $i_G:G\to [0,1]$ with $\sigma\mapsto i_G(\sigma):=\sup_{x\in L^\circ}|\sigma(x)-x|$, where $|\cdot|$ is the norm of $L$ and $L^\circ:=\{x\in L\mid |x|\leq 1\}$, and finally the Herbrand function 
$$
\H_{L/K}: [0,1]\to [0,1],\quad r\mapsto \H_{L/K}(r):=\prod_{\sigma\in G}\max(r,i_G(\sigma)).
$$
Then, one shows that: 1) If $H$ is a normal subgroup of $G$ and $F:=L^H$, then $\H_{L/K}=\H_{F/K}\circ \H_{L/F}$ and 2) For every normal extension $L/K$ of almost monogeneous fields one still can define the Herbrand function by simply putting $\H_{L/K}:=\H_{M/K}\circ\H_{M/L}^{-1}$, where $M$ is the Galois closure of $L/K$, and the point being that Herbrand function still continues to be transitive in the extensions. The author in \lc proceeds by showing that completed residue fields of points in a quasi-smooth $k$-analytic curves are almost monogeneous and that in the setting $f:Y\to X$, $x=f(y)$, above we have equality of functions
$$
\H_{\sH(y)/\sH(x)}\equiv \pf_{f,y}.
$$

In the present article we introduce a class of valued fields which we name Herbrand and whose main property is that the Herbrand function is transitive in the extensions (that is, the properties 1) and 2) above are satisfied, so in particular they contain the class of almost monogeneous fields, see Definition \ref{def: Herbrand}). Their study is the subject of Section 1, and the main result is that if $L/K$ is a finite extension of Herbrand fields then there exists a canonical tower 
\begin{equation}\label{eq: intro tower}
L=L_1/\dots/L_{n+1}=K'/K,
\end{equation}
such that the Herbrand function of each intermediate extension $L_i/L_{i+1}$, for $i=1,\dots,n$ has exactly one break-point, while the Herbrand function of $K'/K$ is trivial (that is identity) and moreover  $\H_{L_i/L_{i+1}}(b_i)<b_{i+1}$ where $b_i$ is the break point of $\H_{L_i/L_{i+1}}$. It is this last condition that gives us the uniqueness of the tower (see Theorem \ref{thm: canonical towers} for precise statement). 

To apply the previous result in the context of finite morphisms of Berkovich curves, in Section 2 we introduce a class of $n$-radial morphisms, which are, sort to speak, a generalization of radial ones, where we require only part of the valuation polygon to be fixed when changing coordinates. More precisely, we say that a finite morphism $f:Y\to X$ of open (unit) discs is weakly $n$-radial if for any choice of compatible coordinates  $S$ and $T$ on $Y$ and $X$, respectively, the corresponding valuation polygon of $f$ expressed in coordinates $T$ and $S$, has the first $n$-slopes and $(n-1)$ break points the same (then, $f$ would be radial if all the slopes and all the break points are the same). We show that they still satisfy nice arithmetic properties (resembling those of the radial ones) and it turns out in Section 3 that their suitable extension to finite morphisms of quasi-smooth $k$-analytic curves provide the right class of morphisms that canonically factorize. 

To this end, let $f:Y\to X$ be a finite morphism of quasi-smooth $k$-analytic curves and let $\Gamma=(\Gamma_Y,\Gamma_X)$ be its skeleton. We recall that for any type 2 point $y$ in a curve $Y$, and its image $x\in X$, the morphism $f$ induces a finite extension of completed residue fields $\sH(y)/\sH(x)$. This in turn induces a finite extension of the residue fields $\wtilde{\sH(y)}/\wtilde{\sH(x)}$. Then, we say that $f$ is residually separable (resp. purely inseparable) at $y$ if the latter extension of fields is so and these notions extend to points of type 3 and 4 by a suitable extension of scalars. The separable degree of $f$ at $y$ is then the degree of the separable closure of $\wtilde{\sH(x)}$ in $\wtilde{\sH(y)}$ over $\wtilde{\sH(x)}$. Finally, we say that $f$ is uniformly residually separable with respect to $\Gamma$ if outside of $\Gamma_Y$, $f$ is an isomorphism while the separable degree of $f$ at any point in $\Gamma_Y$ is the same. We say that $f$ is uniformly weakly $n$-radial with respect to $\Gamma$ if the following conditions are satisfied: 1) The separable degree of $f$ at $y$ is the same for all the points $y\in\Gamma_Y$ and 2) for every open disc $D$ attached to $\Gamma_Y$, $f_{|D}$ is weakly $n$-radial with first $n$ slopes not depending on $D$ (see Definition \ref{def: uniform n radial}). Similarly, we define uniformly radial morphisms (with respect to some skeleton) by suitably modifying condition 2) above.

Now we may state our local and global factorization theorems, which are the subject of Section 3. Namely, by using the above equality of the profile and Herbrand functions, canonical towers in Herbrand extensions and Berkovich theorem \ref{thm: Berk thm}, we prove that if $f:Y\to X$ is a finite morphism of quasi-smooth $k$-analytic curves and $y\in Y$ is a point (not in $Y(k)$), then locally around $y$, the morphism canonically decomposes into factors that have profiles (at corresponding points, that is, images of $y$) with exactly one break-point and at most one profile that is trivial. Furthermore, the tower for the extension $\sH(y)/\sH(x)$ that is induces by the factorization is precisely the one we described in \eqref{eq: intro tower} (see Theorem \ref{thm: local fact}). Any two such factorizations are locally isomorphic and we call (any) such obtained factorization a canonical factorization of $f$ at $y$.

In the global version of this result, Theorem \ref{thm: global fact},  we show that if $f$ is a uniformly weakly $n$-radial morphism with respect to $\Gamma$, then we can decompose it as
$$
Y\xrightarrow[]{g}Z\xrightarrow{h}Z'\xrightarrow{h'}X
$$
where $Z$ and $Z'$ are quasi-smooth $k$-analytic curves, $g$ is uniformly weakly $(n-1)$-radial with respect to $(\Gamma_Y,g(\Gamma_Y))$, $h$ is uniformly radial with respect to $(g(\Gamma_Y),h\circ g(\Gamma_Y))$ with property that for each point $z$ in $g(\Gamma_Y)$, the profile $\pf_{h,z}$ has exactly one break point and finally, $h'$ is uniformly residually separable with respect to $(h\circ g(\Gamma_Y),\Gamma_X)$. Furthermore, for any $y\in\Gamma_Y$, a canonical factorization of $g$ at $y$ ``composed'' with $h$ and $h'$ gives us a canonical decomposition of $f$ at $y$. As a consequence of this result, in the  Corollary \ref{cor: global fact} we treat uniformly radial morphisms and show that if $f$ is uniformly radial with respect to $\Gamma$, then we can decompose it into series of finite morphisms of quasi-smooth $k$-analytic curves such that for any $y\in\Gamma_Y$ the decomposition is canonical decomposition of $f$ at $y$.

Finally, in Section 4 we put to use the results of Section 3 by studying harmonicity properties of $f$ at points of type 2. Roughly speaking this part of the article goes as follows. We keep the setting $f:Y\to X$ as above and let $y\in Y$ be a point of type 2 in the interior of $Y$ (that is, $y$ has a smooth neighborhood in $Y$). Let $\pf_{f,y}$ be the profile of $f$ at $y$, let $0<b_1<\dots<b_n<1$ be its break points and suppose (for this introduction) that $n\geq 1$. Let us put $b_0:=0$ and $b_{n+1}:=1$ and let $p^{\alpha_i}$ be the local degree of $\pf_{f,y}$ over $(b_i,b_{i+1})$, for $i=0,\dots,n$. Since $y$ is of type 2 the field $\wtilde{\sH(y)}$ is a function field of a smooth projective $\wtilde{k}$-algebraic curve (where $\wtilde{k}$ is the residue field of $k$) of genus $g$ and we put $g(y):=g$. Furthermore, $f$ admits a neighborhood $U$ in $Y$ such that $U\setminus\{y\}$ is a disjoint union of open discs and finitely many open annuli in $Y$ and we denote their set by $T_yY$. For any $\vt\in T_yY$ let $A_{\vt}$ be a small enough open annulus  contained in $\vt$ such that the closure of $A_\vt$ in $Y$ contains $y$, $f_{|A_\vt}:A_{\vt}\to f(A_\vt)$ is a finite \'etale morphism of open annuli which is uniformly radial with respect to the skeleton of $f_{|A_\vt}$ coming from skeleta of $A_\vt$ and $f(A_\vt)$ (the latter are simply the complements of the union of all the open discs in $A_\vt$ and $f(A_\vt)$, respectively). Let further $T_\vt$ be a coordinate on $A_\vt$ that identifies it with an open annulus $A(0;r_\vt,1)$ of inner radius $r_\vt$ and outer radius $1$ and such that the points $\eta_{\vt,\rho}$, $\rho\in (r_\vt,1)$, converge to $y$ as $\rho$ goes to 1, where $\eta_{\vt,\rho}$ is the unique point on the skeleton of $A(0;r_\vt,1)$ that is of radius $\rho$ (see the end of Section \ref{sec: disc and annuli} for the definition of radius). Let $\pf_{f,\eta_{\vt,\rho}}$ be the profile of $f$ at $\eta_{\vt,\rho}$, let $0<b_{\vt,1}(\rho)<\dots<b_{\vt,n(\vt)}(\rho)<1$ be the break points of $\pf_{f,\eta_{\vt,\rho}}$ and let us put $b_{\vt,0}(\rho):=0$ and $b_{\vt,n(\vt)+1}(\rho)=1$. Let further $p^{\alpha_{\vt,i}}$ be the local degree of $\pf_{f,\eta_{\vt,\rho}}$ over the interval $(b_{\vt,i}(\rho),b_{\vt,i+1}(\rho))$, for $i=0,\dots,n(\vt)$. Here we note that $n(\vt)\geq n$ because of the continuity of the profile function and that the local degrees will not depend on $\rho$ if we choose $A_\vt$ sufficiently small. Finally, we fix an $i\in \{1,\dots,n\}$ and let $I_{\vt,i}$ denote the subset of $\{1,\dots,n(\vt)\}$ such that for every $j\in I_{\vt,i}$ we have $\lim(b_{\vt,j}(\rho))=b_i$, as $\rho\to 1$. Then, we have the following formula
$$
(p^{\alpha_i}-p^{\alpha_{i-1}})\cdot (2-2\cdot g(y))=\sum_{\vt\in TyY}\left(\sum_{j\in I_{\vt,i}}(p^{\alpha_{\vt,j}}-p^{\alpha_{\vt,j-1}})\cdot \big(\partial_{\vt}b_{\vt,j}-1\big)\right),
$$
where $\partial_{\vt}b_{\vt,j}:=\lim_{\rho\to 1}\frac{d\log b_{\vt,j}(\rho)}{d\log \rho}$. This is an instance of the formulas in Theorem \ref{thm: loc harmonicity} and which refine the local Riemann-Hurwitz formula for $f$ at $y$, Theorem \ref{thm: RH}.

\section{Canonical towers in Herbrand extensions}

\subsection{The set $\Lambda_p$}

\begin{defn}\label{defn: lambda} Let $p$ be a prime number. We denote by $\Lambda_p$ the set of functions $f:[0,1]\to [0,1]$ that satisfy the following properties:
\begin{enumerate}
 \item The function $f$ is continuous, strictly increasing with $f(0)=0$ and $f(1)=1$.
\item There exists real numbers $b_0=0<b_1<\dots<b_n<b_{n+1}=1$, positive real numbers $a_1,\dots,a_{n+1}$ and non-negative integers $\alpha_0,\dots,\alpha_n$ such that $f$ restricted to $(b_{i-1},b_i)$ is of the form $r\mapsto a_i\cdot r^{p^{\alpha_{i-1}}}$, for $i=1,\dots,n+1$. 

 \item For each $i=0,\dots,n$, as in the previous point, we have $\alpha_i<\alpha_{i+1}$ with $\alpha_0=0$.
\end{enumerate}
We will refer to the points $b_1,\dots,b_n$ as the \emph{break-points} of $f$, to the numbers $p^{\alpha_i}$ as the \emph{local degrees} of $f$ while $a_i$ are the \emph{local coefficients} of $f$. We call $p^{\alpha_{n}}$ the \emph{degree} of $f$. 
\end{defn}

 One may note that if in the previous definition $n=0$ for some $f\in \Lambda_p$, then $f$ is the identity function. Here are some further easily established properties of the set $\Lambda_p$. 
\begin{lemma}\label{lem: lambda prop}
 \begin{enumerate}
 \item  The set $\Lambda_p$ is closed under function-composition of its elements.
  \item For any $f\in\Lambda_p$, $a_{n+1}=1$.
  \item If $f\in \Lambda_p$, then $f^{-1}:[0,1]\to [0,1]$ is a continuous, strictly increasing function (with $f^{-1}(0)=0$ and $f^{-1}(1)=1$).
  \item Keeping the notation from the definition, each $f\in \Lambda_p$ is given by $r\mapsto \max\limits_{i=1,\dots,n+1}\{a_i\,r^{p^{\alpha_{i-1}}}\}$, for $r\in[0,1]$. In particular, $f$ is a convex function. 
  \item If the number of break-points of $f\in\Lambda_p$ is $n\geq 1$ and keeping the notation as in Definition \ref{defn: lambda}, then for $i=1,\dots,n+1$, we have
 \begin{equation}\label{eq: ai bi}
 a_i=\prod_{j=i}^{n+1}b_j^{p^{\alpha_j}-p^{\alpha_{j-1}}}.
 \end{equation}
  \end{enumerate}
\end{lemma}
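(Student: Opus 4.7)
The plan is to handle the items in an order that exposes their dependencies: first (2) and (3), then (5), then (4), and finally (1), since the composition argument is cleaner once one has the $\max$-representation.

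For (2), evaluating the rightmost piece $r\mapsto a_{n+1}r^{p^{\alpha_n}}$ at $r=1$ and using $f(1)=1$ immediately gives $a_{n+1}=1$. Item (3) is then a standard consequence of $f$ being a continuous strictly increasing bijection $[0,1]\to[0,1]$. For (5), continuity of $f$ at each break-point $b_i$ yields the matching equation $a_i b_i^{p^{\alpha_{i-1}}}=a_{i+1}b_i^{p^{\alpha_i}}$, so $a_i=a_{i+1}b_i^{p^{\alpha_i}-p^{\alpha_{i-1}}}$; iterating this downward from $a_{n+1}=1$ gives the product formula.

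For (4), I pass to log-log coordinates: with $u=\log r$, each piece $a_i r^{p^{\alpha_{i-1}}}$ becomes a line of slope $p^{\alpha_{i-1}}$, and by condition (3) of the definition these slopes are strictly increasing in $i$; the continuity identity already used in (5) says consecutive lines meet precisely at $u=\log b_i$. Hence the graph of $\log f\circ\exp$ is the upper convex envelope of the lines $\{\log a_i+p^{\alpha_{i-1}}u\}_{i=1}^{n+1}$, which yields the $\max$-representation. Convexity of $f$ itself follows because each $r\mapsto a_i r^{p^{\alpha_{i-1}}}$ is convex on $[0,1]$ (the exponent is $\geq 1$) and the pointwise maximum of convex functions is convex.

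Finally, for (1), let $f,g\in \Lambda_p$ carry data $(b_i,a_i,\alpha_{i-1})$ and $(c_j,g_j,\beta_{j-1})$. The composition $g\circ f$ is continuous, strictly increasing, and maps $0\mapsto 0$, $1\mapsto 1$. On each subinterval cut out by consecutive elements of $\{b_i\}\cup\{f^{-1}(c_j)\}$, one has $r\in(b_{i-1},b_i)$ and $f(r)\in(c_{j-1},c_j)$, and so
$$
g(f(r))=g_j\, a_i^{p^{\beta_{j-1}}}\, r^{p^{\alpha_{i-1}+\beta_{j-1}}},
$$
a monomial of the required shape, with exponent $p^{\alpha_0+\beta_0}=1$ on the leftmost interval. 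Crossing any such candidate break-point strictly increases either $\alpha_{i-1}$ or $\beta_{j-1}$ (or both) by condition (3) applied to $f$ and $g$, hence the total exponent $\alpha_{i-1}+\beta_{j-1}$ strictly increases across every candidate break-point; this simultaneously verifies condition (3) for $g\circ f$ and shows that no candidate break-point is spurious. The main obstacle is exactly this last step: careful bookkeeping of the active pieces of $f$ and $g$ on each subinterval, together with checking that the strict-increase condition is preserved under composition; the remaining items are essentially algebraic manipulations.
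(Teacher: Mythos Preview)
Your proof is correct and follows essentially the same path as the paper's: the continuity recursion $a_i=a_{i+1}b_i^{p^{\alpha_i}-p^{\alpha_{i-1}}}$ drives both (4) and (5), and the paper, like you, treats (1)--(3) as straightforward. The only cosmetic difference is that for (4) the paper derives the $\max$-representation by a direct pairwise inequality (showing $a_i r^{p^{\alpha_{i-1}}}\gtrless a_{i+1} r^{p^{\alpha_i}}$ according as $r\lessgtr b_i$) rather than via the log--log envelope picture, and it does not spell out the composition argument for (1) at all.
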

\begin{proof}
 The first three points are straightforward, so we address the fourth one. Let $f\in \Lambda_p$. If $f$ has no break-points then it is the identity function so there is nothing to prove. Otherwise, suppose that $f$ has $n\geq 1$ break-points and let them be $b_1<\dots<b_n$ (we put as usually $b_0=0$, $b_{n+1}=1$ and use notation from Definition \ref{defn: lambda}). Then, for each $i=1,\dots,n$, we have $a_i\cdot b_i^{p^{\alpha_{i-1}}}=a_{i+1}\cdot b_i^{p^{\alpha_{i}}}$, by continuity of $f$ at $b_i$, so in particular  
 \begin{equation}\label{eq: ai bi 1}
 a_i=a_{i+1}\cdot b_i^{p^{\alpha_{i}}-p^{\alpha_{i-1}}}.
 \end{equation}
 Then, for $r\in [0,1]$ we obtain 
 \begin{align*}
\text{for } r<b_i,&\quad a_i\cdot r^{p^{\alpha_{i-1}}}=a_{i+1}\cdot b_i^{p^{\alpha_{i}}-p^{\alpha_{i-1}}}\cdot r^{p^{\alpha_{i-1}}}>a_{i+1}\cdot r^{p^{\alpha_{i}}}\\
\text{for } r>b_i,&\quad a_i\cdot r^{p^{\alpha_{i-1}}}=a_{i+1}\cdot b_i^{p^{\alpha_{i}}-p^{\alpha_{i-1}}}\cdot r^{p^{\alpha_{i-1}}}<a_{i+1}\cdot r^{p^{\alpha_{i}}}.
\end{align*}
It follows that $f$ is given by $r\mapsto \max\{a_i\,r^{p^{\alpha_{i-1}}}\mid i=1,\dots,n+1\}$, and consequently it is convex.

For the point {\emph (5)}, we continue the chain of equations in \eqref{eq: ai bi 1} which together with $a_{n}\,b_n^{p^{\alpha_{n-1}}}=b_n^{p^{\alpha_{n}}}$ gives us \eqref{eq: ai bi}.
\end{proof}

\begin{defn}
 We say that $f\in \Lambda_p$ is simple if it has exactly one break-point. We will sometimes also denote the break-point by $b_f$, while the nontrivial local coefficient by $a_f$. 
\end{defn}

In particular, if $b_f$ and $p^\alpha$ are the break-point and degree, respectively, of a simple function $f$, then $f$ is given by
$$
f(r)=\begin{cases}
          b_f^{p^\alpha-1}\cdot r,\quad r\in [0,b_f]\\
          r^{p^\alpha},\quad r\in [b_f,1].
         \end{cases}
$$

\begin{lemma}\label{lem: factoring in lambda}
 Let $f\in \Lambda_p$ and suppose that it has $n\geq 2$ break-points. Then, there exist unique simple functions $f_1,\dots,f_n\in \Lambda_p$ such that 
 \begin{enumerate}
  \item We have $f=f_n\circ\dots\circ f_1$;
  \item For $i=1,\dots, n$ we have $f_i(b_{f_i})<b_{f_{i+1}}$.
 \end{enumerate}
 Explicitly, keeping the notation from Definition \ref{defn: lambda}, the functions $f_i$ are given by
 \begin{equation}\label{eq: factor profile}
 f_i(r)=\begin{cases}
         b_i^{p^{\alpha_{i}}-p^{\alpha_{i-1}}}\cdot r,\quad r\in[0,b_i^{p^{\alpha_{i-1}}}]\\
         r^{p^{\alpha_{i}-\alpha_{i-1}}},\quad r\in[b_i^{p^{\alpha_{i-1}}},1].
        \end{cases} 
 \end{equation}
\end{lemma}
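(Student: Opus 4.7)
The plan is to verify the explicit formula \eqref{eq: factor profile} directly for existence, and then establish uniqueness by analyzing how break-points propagate under composition.

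First I would check that each $f_i$ defined by \eqref{eq: factor profile} is a simple element of $\Lambda_p$: it is continuous and strictly increasing (since $\alpha_i>\alpha_{i-1}$ ensures $p^{\alpha_i-\alpha_{i-1}}>1$), fixes $0$ and $1$, and has unique break-point $b_{f_i}=b_i^{p^{\alpha_{i-1}}}$ with degree $p^{\alpha_i-\alpha_{i-1}}$. Condition \emph{(2)} is then immediate: one computes $f_i(b_{f_i})=b_i^{p^{\alpha_i}}$, which is strictly less than $b_{f_{i+1}}=b_{i+1}^{p^{\alpha_i}}$ because $b_i<b_{i+1}$.

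For the composition identity, I would prove by induction on $k$ that $F_k:=f_k\circ\cdots\circ f_1$ is the element of $\Lambda_p$ with break-points $b_1<\cdots<b_k$ and local degrees $p^{\alpha_0},\ldots,p^{\alpha_k}$. The inductive step exploits condition \emph{(2)} just verified: on $[0,b_k]$ the function $F_k$ takes values in $[0,b_k^{p^{\alpha_k}}]$, which lies strictly below $b_{f_{k+1}}=b_{k+1}^{p^{\alpha_k}}$, so $f_{k+1}$ acts by a single linear rescaling on this range and $F_{k+1}$ inherits the same $k$ break-points of $F_k$; a new break appears precisely where $F_k$ first attains $b_{f_{k+1}}$, namely at $r=b_{k+1}$, and above $b_{k+1}$ one computes $F_{k+1}(r)=r^{p^{\alpha_{k+1}}}$. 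Matching local degrees gives $F_n=f$, since by Lemma \ref{lem: lambda prop}\emph{(5)} any element of $\Lambda_p$ is determined by its break-points and local degrees.

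For uniqueness, given another factorization $g_1,\ldots,g_n$ satisfying both conditions, I would apply the same inductive analysis to $G_k:=g_k\circ\cdots\circ g_1$: writing $c_i:=b_{g_i}$ and $p^{\beta_i}$ for the degree of $g_i$, the break-points of $G_k$ turn out to be $c_1<g_1^{-1}(c_2)<(g_2\circ g_1)^{-1}(c_3)<\cdots$, and its consecutive local degrees are the partial products $p^{\beta_1+\cdots+\beta_j}$. Setting $G_n=f$ and matching break-points and degrees recursively forces $\beta_i=\alpha_i-\alpha_{i-1}$ and $c_i=b_i^{p^{\alpha_{i-1}}}$, whence $g_i=f_i$. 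The main technical obstacle is the bookkeeping in the inductive composition step: condition \emph{(2)} is precisely what guarantees that no two break-points of the composition collide, so that the $n$ given break-points of $f$ correspond bijectively to the $n$ factors $f_i$; without it the recursion above could skip over or merge break-points and destroy uniqueness.
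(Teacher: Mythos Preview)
Your proposal is correct and follows essentially the same approach as the paper: the existence argument verifies the explicit formula and checks condition \emph{(2)} identically, with your inductive computation of $F_k$ being a mild repackaging of the paper's direct interval-by-interval evaluation of $f_n\circ\cdots\circ f_1$. For uniqueness the paper peels off the first factor by showing $b_{f'_1}$ must be the smallest break-point of $f$ (hence $f'_1=f_1$) and then iterates, whereas you match all break-points and degrees of $G_n$ to those of $f$ at once; these are equivalent, though the paper's version is marginally more economical since it avoids tracking the full composition $G_k$.
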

\begin{proof}
  We keep the notation from Definition \ref{defn: lambda} and we also put $b_0:=0$ and $b_{n+1}:=1$. 
 
 \emph{Existence.} Clearly, $f_i\in\Lambda_p$, $f_i$ is simple with the break-point $b_{f_i}:=b_i^{p^{\alpha_{i-1}}}$ and we also note that 
 \begin{equation}\label{eq: passage 1}
 f_i(b_{f_i})=b_i^{p^{\alpha_{i}}}<b_{i+1}^{p^{\alpha_{i}}}=b_{f_{i+1}}.
 \end{equation}
 Next we show that $f=f_n\circ\dots\circ f_1$. Let $r\in [b_{i-1},b_{i}]$, for some $i=1,\dots,n+1$. Then, by constructions of $f_i$'s, we have that $f_{i-1}\circ\dots\circ f_1(r)=r^{p^{\alpha_{i-1}}}$. On the other side, $f_i(r^{p^{\alpha_{i-1}}})=b_i^{p^{\alpha_{i}}-p^{\alpha_{i-1}}}\cdot r^{p^{\alpha_{i-1}}}<b_{f_{i+1}}$, so we obtain inductively 
 $$
 f_n\circ\dots\circ f_i(r^{p^{\alpha_i}})=\big(\prod_{j=i}^nb_j^{p^{\alpha_{j}}-p^{\alpha_{j-1}}}\big)\cdot r^{p^{\alpha_{i-1}}}=a_i\cdot r^{p^{\alpha_{i-1}}}=f(r),
 $$
 by Lemma \ref{lem: lambda prop}.

 \emph{Uniqueness.} Suppose we have a factorization $f=f'_{l}\circ\dots\circ f'_n$ satisfying the conditions of the theorem. It is enough to prove that $f'_1$ is uniquely determined (that is $f'_1=f_1$) as then the claim for the rest of the functions will follow by applying the same reasoning and constructions of the simple factors to the function $f'_n\circ\dots\circ f'_2$ instead of $f$. Let $d_1$ be the degree of $f'_1$. Since $f'_1(b_{f'_1})<b_{f'_2}$ and consequently for each $i=2,\dots,n$, $f'_{i}\circ\dots\circ f'_1(b_{f_1})<b_{f'_{i+1}}$, we have that for $r\in [0,b_{f'_1}]$, $f(r)=a_{f'_l}\dots a_{f'_2}\cdot r$.  On the other side, for $r\in[b_{f'_1},{f'}_1^{-1}(b_{f'_2}))$ we have $f'_1(r)=r^{d_1}<b_{f'_2}$ so, consequently, for each $i=2,\dots,n$, we have $f'_i\circ\dots\circ f'_1(r)=a_{f'_l}\,\dots\, a_{f'_2}\cdot r^{d_1}$. It follows that in this case $f(r)=a_{f'_l}\,\dots\,a_{f'_2}\cdot r^{d_1}$, hence $b_{f'_1}$ is the smallest break-point of $f$, that is $b_{f'_1}=b_1=b_{f_1}$ and $d_1=p^{\alpha_1}$. So $f'_1=f_1$ and the claim follows. 
 \end{proof}

 \begin{defn}\label{def: lambda decomposition}
  Suppose $f\in\Lambda_p$ has more than one break-point. We call the factorization $f=f_n\circ\dots\circ f_1$ from Lemma \ref{lem: factoring in lambda} the \emph{canonical factorization of $f$} (into simple factors). 
  \end{defn}
\subsection{Herbrand extensions}

\subsubsection{} 
By a valued field we mean a field equipped with a nontrivial, nonarchimedean norm. If $K$ is a valued field with norm $|\cdot|$, we denote by $K^{\circ}$ and $K^{\circ \circ}$ its ring of integers and the corresponding maximal ideal, respectively. Its residue field is denoted by $\wtilde{K}$.

We recall that a valued field $K$ is said to be Henselian if for every finite field extension $L/K$, the norm $K$ uniquely extends to a norm on $L$. 

Let $L/K$ be a finite, Galois extension of valued fields (meaning that the norm of $L$ on $K$ coincides with the norm of $K$), with Galois group $G$. 

The \emph{inertia function} $i_G:=i_{G,L/K}:G\to [0,1]$ is defined as 
$$
\sigma\in G\mapsto i_G(\sigma)=\sup_{t\in L^\circ}|\sigma(t)-t|. 
$$
A point $r\in [0,1]$ is called \emph{a break-point for $i$} if there exists a $\sigma\in G$ with $i_G(\sigma)=r$. If a break-point is different from 0 or 1 we will call it nontrivial. Let $0=r_0<r_1<\dots<r_n\leq1$  be all the breakpoints of $i$. The \emph{ramification filtration of $G$} assigned to the inertia function is given by the subgroups $G_{r_i}:=\{\sigma\in G\mid i_G(\sigma)\leq r_i\}$. Clearly we have
$$
\{\id\}=G_{r_0}\norneq G_{r_1}\norneq\dots\norneq G_{r_n}=G.
$$
\begin{lemma}\label{lem: p-groups}
 Suppose that $\text{char}(\wtilde{K})=p>0$. If for some $i=1,\dots,n$, $r_i<1$, then the group $G_{r_i}$ is a $p$-group and $|G_{r_i}|$ is a power of $p$. 
\end{lemma}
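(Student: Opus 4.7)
The plan is to argue by contradiction: assume $G_{r_i}$ contains a non-identity element $\sigma$ of prime order $q\neq p$ and derive $i_G(\sigma)=1$, contradicting $i_G(\sigma)\leq r_i<1$. The starting observation is that every $\sigma\in G_{r_i}$ acts trivially on $\wtilde L$, because $|\sigma(t)-t|\leq r_i<1$ for all $t\in L^\circ$ forces $\overline{\sigma(t)}=\bar t$ in $\wtilde L$. In particular, the fixed field $F:=L^{\langle\sigma\rangle}$ satisfies $\wtilde F=\wtilde L$, so $L/F$ is cyclic Galois of degree $q$ with trivial residue extension. Since the inertia function of $\sigma$ viewed as an element of $\Gal(L/F)$ coincides with $i_G(\sigma)$, we may replace $K$ by $F$ and reduce to the cyclic case.

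Next, I would put $L/F$ into Kummer normal form. Since $q$ is coprime to $p$, a primitive $q$-th root of unity $\zeta$ may be adjoined to $F$ without affecting $i_G(\sigma)$ (the extension $F(\zeta)/F$ is unramified tame, and one has $|\zeta-1|=1$ in any extension containing it). Kummer theory then provides $y\in L$ with $L=F(y)$, $y^q\in F^\times$, and $\sigma(y)=\zeta y$. For any $c\in F^\times$, the element $t:=cy$ satisfies
\[
|\sigma(t)-t|=|c(\zeta-1)y|=|cy|,
\]
so driving $|\sigma(t)-t|$ toward $1$ reduces to making $|cy|$ approach $1$ from below while keeping $|cy|\leq 1$ (so that $t\in L^\circ$).

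The final step is a density argument: in the paper's setting the value group $|F^\times|$ sits inside the divisible group $|k^\times|$ and is dense in $\R_{>0}$, so one can choose $c\in F^\times$ with $|c|$ arbitrarily close to $|y|^{-1}$ from below. The corresponding $t=cy\in L^\circ$ then satisfies $|\sigma(t)-t|=|cy|\to 1$, forcing $i_G(\sigma)=1$ and yielding the contradiction. The bulk of the work (residue-triviality, reduction to cyclic, Kummer normal form) is essentially bookkeeping; the substantive step I expect to be most delicate is the density/approximation argument, which relies on the structural assumption that value groups are dense, and without which tame elements would provide a genuine obstruction to the $p$-group property.
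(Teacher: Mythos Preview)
Your approach is sound but takes a genuinely different route from the paper's. The paper gives a short direct argument: for $\sigma\in G_{r_i}$ and $t\in L^\circ$ it sets $s=t+\sigma(t)+\cdots+\sigma^{p-1}(t)$, observes the telescoping identity $\sigma^p(t)-t=\sigma(s)-s$, and uses $s=\sum_{j}(\sigma^j(t)-t)+p\cdot t$ together with $|p|<1$ to get $|s|\le\max\{r_i,|p|\}<1$. Feeding this through the multiplicative bound $|\sigma(u)-u|\le i_G(\sigma)\,|u|$ yields $i_G(\sigma^p)\le r_i\cdot\max\{r_i,|p|\}<r_i$, so $\sigma^p\in G_{r_{i-1}}$, and the $p$-group claim follows by induction on $i$. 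No Kummer theory, no roots of unity, no naming of a hypothetical $q$-torsion element.

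Your Kummer-eigenvector argument is the more classical one and works, but two points deserve tightening. First, the step ``adjoin $\zeta$ without affecting $i_G(\sigma)$'' is delicate in the generality of the lemma; rather than justify base change of the inertia function along $F\subset F(\zeta)$, it is cleaner to note that in the paper's intended applications one has $k\subset K$ with $k$ algebraically closed, so $\mu_q\subset K\subset F$ already and no adjunction is needed. Second, you are right that the conclusion hinges on density of the value group: without it, tame cyclic extensions furnish genuine counterexamples (for $K$ discretely valued with $\mu_q\subset K$ and $L=K(\pi^{1/q})$ one computes $i_G(\sigma)=|\pi|^{1/q}<1$ while $|G|=q$). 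It is worth observing that the paper's own argument uses the same hypothesis implicitly, through the bound $|\sigma(u)-u|\le i_G(\sigma)\,|u|$, which fails in exactly the same discrete example; so your caution here is well placed and applies to both proofs. What the paper's approach buys is brevity and the avoidance of any auxiliary field extensions; what yours buys is a transparent identification of the obstruction (a tame Kummer generator) when the density hypothesis is dropped.
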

\begin{proof}
 This is standard. It is enough to prove that if $\sigma\in G_{r_i}$, then $\sigma^p\in G_{r_{i-1}}$, where $i=1,\dots,n$. 
 
 Let $t\in L^\circ$. Then
 \begin{align}
  |\sigma^p(t)-t|&=|\sigma\big(\sigma^{p-1}(t)+\dots+\sigma(t)+t\big)-\big(\sigma^{p-1}(t)+\dots+\sigma(t)+t\big)|\nonumber\\
  &\leq i_G(\sigma)\cdot|\sigma^{p-1}(t)+\dots+\sigma(t)+\id(t)|\label{eq: bound}
 \end{align}
Finally, we note that for $\sigma_1,\dots,\sigma_p\in G_{r_i}$, and $t\in L^\circ$ we have
$$
|\sigma_1(t)+\dots+\sigma_p(t)|=|(\sigma_1(t)-t)+\dots+(\sigma_p(t)-t)+p\cdot t|\leq \max\{r_i,p\}\cdot|t|\leq \max\{r_i,p\},
$$
which combined with \eqref{eq: bound} yields the result.
\end{proof}

We call the function $\H:=\H_{L/K}:[0,1]\to [0,1]$, defined as 
\begin{equation}\label{eq: H def}
r\in[0,1]\mapsto \prod_{\sigma\in G}\max\{r,i_G(\sigma)\},
\end{equation}
the \emph{Herbrand function} of the Galois extension $L/K$. It follows that the function $\H$ is a continuous, piece-wise monomial function, that is, outside of finitely many points of the interval $(0,1)$ it is of the form $r\mapsto a\cdot r^d$, where $a\in (0,1)$. The points of the interval $(0,1)$ where it fails to be smooth are precisely the break-points of the inertia function (different from 0 and 1). We will refer to them as the nontrivial break-points of $\H$, while we will call $0$ and $1$ the trivial ones.

\subsubsection{}
\begin{defn}\label{def: Herbrand}
 Let $K$ be a henselian field. We say that a finite Galois extension $L/K$ is a \emph{Herbrand extension}, if for every normal subgroup $H\trianglelefteq G$ and corresponding intermediate field $F:=L^H$, we have
 \begin{equation}\label{eq: transitivity}
  \H_{L/K}=\H_{F/K}\circ \H_{L/F}.
 \end{equation}
We say that a finite extension $L/K$ is a \emph{Herbrand extension} if its Galois closure is so.
Finally, if every finite extension of $K$ is Herbrand, we say that $K$ is a \emph{Herbrand field}.
\end{defn}
If $L/K$ is a Herbrand extension, then we define $\H_{L/K}:=\H_{M/K}\circ \H_{M/L}^{-1}$, where $M$ is a finite Galois extension of $L/K$ (point being that if $L/K$ were Galois, then the two constructions of $\H_{L/K}$ coincide, thanks to \eqref{eq: transitivity}).
\begin{rmk}
 The motivation for the name Herbrand extension comes from the classical theory of ramification, namely from Herbrand theorem on behavior of ramification groups in upper and lower indexing. Crucial result used in the proof of Herbrand theorem is exactly the transitivity property \eqref{eq: transitivity} (See \cite[Chapter IV, Section 3.]{SerreLF}).
\end{rmk}

\subsubsection{}\label{sec: almost monogeneous} 
Here is an important example of Herbrand fields. Recall that a henselian extension of valued fields $L/K$ is said to be \emph{monogeneous} if there exists $x\in L^\circ$ such that $L^\circ=K^\circ[x]$. Following \cite[Section 4.]{TeMu} we say that it is \emph{almost monogeneous} if for every $r<1$ there exists an $a_r\in K^\circ$ with $r\leq |a_r|$ and $x_r\in L^\circ$ such that $a_rL^\circ\subseteq K^\circ[x_r]$. Consequently, $L=K(x_r)$ and $L/K$ is finite. We note that monogeneous extensions of valued fields are almost monogeneous. 

A Henselian valued field $K$ is called \emph{almost monogeneous} if its every finite extension is almost monogeneous. Then, \cite[Theorem 4.3.5.]{TeMu} proves that almost monogeneous fields are Herbrand. 

\subsubsection{}
The motivation to introduce the set $\Lambda_p$ is the following lemma.
\begin{lemma}\label{lem: H in lambda}
 Let $K$ be a Herbrand field and let $L/K$ be a finite field extension. If $p>0$ is the residual characteristic of $K$, then $\H_{L/K}\in \Lambda_p$.
\end{lemma}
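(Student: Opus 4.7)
The plan is to reduce to the Galois case and then extract the required structure from the ramification filtration using Lemma~\ref{lem: p-groups}.

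\emph{Galois case.} Suppose $L/K$ is Galois with group $G$. Let $0 = r_0 < r_1 < \cdots < r_n$ be the distinct values of $i_G$, and set $G_{\leq r} := \{\sigma \in G : i_G(\sigma) \leq r\}$. A direct expansion of~\eqref{eq: H def} shows that on each smoothness interval (namely $(r_{j-1}, r_j)$ for $j = 1, \dots, n$, and $(r_n, 1)$ if $r_n < 1$), $\H_{L/K}$ is of the form $r \mapsto a_j \cdot r^{|G_{\leq r_{j-1}}|}$ for a suitable constant $a_j$. By Lemma~\ref{lem: p-groups}, whenever $r_{j-1} < 1$ the group $G_{\leq r_{j-1}}$ is a $p$-group, so its order is a power of $p$; since $G_{\leq 0} = \{\id\}$ and the chain $G_{\leq r_0} \subsetneq G_{\leq r_1} \subsetneq \cdots$ is strict, the resulting exponents form a strictly increasing sequence of powers of $p$ starting at $p^0 = 1$. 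Continuity and the boundary values $\H_{L/K}(0) = 0$, $\H_{L/K}(1) = 1$ are immediate from the formula, so $\H_{L/K} \in \Lambda_p$.

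\emph{General case.} Let $M$ be the Galois closure of $L/K$ (which is Herbrand since $K$ is a Herbrand field), and set $G := \Gal(M/K)$, $H := \Gal(M/L)$. Both $M/K$ and $M/L$ are Galois, and by Definition~\ref{def: Herbrand}
$$\H_{L/K} = \H_{M/K} \circ \H_{M/L}^{-1},$$
with both factors in $\Lambda_p$ by the Galois case. For $r = \H_{M/L}(s) \in (0,1)$ lying inside a smoothness piece, the exponent of $\H_{L/K}$ at $r$ computes as $|G_{\leq s}|/|H \cap G_{\leq s}|$. Since $s < 1$, Lemma~\ref{lem: p-groups} makes $G_{\leq s}$ a $p$-group, so this quotient is a power of $p$; and for $s$ near $0$ one has $G_{\leq s} = \{\id\}$, giving first exponent $p^0 = 1$.

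For the strict increase of the exponents across breakpoints, we use that $i_G$ is a class function, so each $G_{\leq r}$ is normal in $G$. At a jump $s^*$ from $N := G_{\leq s^{*-}}$ to $N' := G_{\leq s^*}$, the numerator multiplies by $[N' : N]$ and the denominator by $[H \cap N' : H \cap N]$, the latter dividing the former since $(H \cap N')/(H \cap N)$ injects into $N'/N$. Thus the multiplicative jump of the exponent is a positive integer power of $p$, and it exceeds $1$ precisely at the true breakpoints of $\H_{L/K}$. Combined with continuity and boundary values inherited from $\H_{M/K}$ and $\H_{M/L}^{-1}$, this gives $\H_{L/K} \in \Lambda_p$.

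The main obstacle is that $\Lambda_p$ is not closed under the operation $(f,g) \mapsto f \circ g^{-1}$ in general, so membership of $\H_{L/K}$ cannot be deduced formally from the Galois case. The argument must exploit the compatibility of the subgroup $H$ with the ramification filtration $\{G_{\leq r}\}_r$ together with the $p$-group structure of each $G_{\leq r}$, both provided by Lemma~\ref{lem: p-groups} and the class-function property of $i_G$.
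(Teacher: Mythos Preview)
Your proof is correct and follows essentially the same route as the paper: handle the Galois case directly via Lemma~\ref{lem: p-groups}, then for general $L/K$ pass to the Galois closure $M$ and analyze $\H_{M/K}\circ\H_{M/L}^{-1}$ piecewise using the compatibility of $H$ with the ramification filtration of $G$. The only notable difference is in the group-theoretic step for monotonicity of the exponents: the paper rewrites the local exponent as $|G_{h_{i,j}}|/|H_{h_i}|=|HG_{h_{i,j}}|/|H|$ via the second isomorphism theorem (so strict increase follows at once from $G_{h_{i,j}}\subsetneq G_{h_{i,j+1}}$), whereas you show directly that the multiplicative jump $[N':N]/[H\cap N':H\cap N]$ is a positive integer via the injection $(H\cap N')/(H\cap N)\hookrightarrow N'/N$. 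Both arguments rely on normality of the $G_{\leq r}$ and give the same conclusion; the paper's reformulation is slightly slicker in that it makes the increase visible without tracking ratios across each jump.
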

\begin{proof}
 If $L/K$ is in addition Galois, then the result follows from Lemma \ref{lem: p-groups}. More precisely, as we already remarked, $\H_{L/K}$ is continuous, strictly increasing and surjective function $[0,1]\to [0,1]$ which has finitely many break-points, so condition \emph{(1)} in Definition \ref{defn: lambda} is satisfied. Let $b<c$ be two consecutive break-points of $\H_{L/K}$ and $r\in (b,c)$. Then, 
 \begin{equation}\label{eq: def group ord}
 \H_{L/K}(r)=\prod_{\sigma\in G}\max\{r,i(\sigma)\}=\big(\prod_{\sigma\in G_b} r\big)\cdot\big(\prod_{\sigma\in G\setminus G_b}i(\sigma)\big)=a\cdot r^{|G_b|},
 \end{equation}
 so by Lemma \ref{lem: p-groups}, $\H_{L/K}$ also satisfies conditions \emph{{(2)}} and \emph{(3)} as well.
 
 Suppose now that $L/K$ is not Galois, and let $M$ be its Galois closure, with $G=Gal(M/K)$ and $H=Gal(M/L)$. By definition, $\H_{L/K}=\H_{M/K}\circ \H^{-1}_{M/L}$, and the latter is clearly a continuous, surjective function on the segment $[0,1]$. 
 
 We also notice that the set of the break-points of $\H_{M/L}$ is a subset of the set of break-points of $\H_{M/K}$. So let $h_0=0<h_1<\dots< h_m\leq h_{m+1}=1$ be the break-points of $\H_{M/L}$, and let $h_i=h_{i,1}<\dots< h_{i,j(i)}<h_{i,j(i)+1}=h_{i+1}$ be the break-points of $\H_{M/K}$ in the interval $[h_i,h_{i+1}]$, $i=0,\dots,m$. Then, if we denote by $h'_{i,j}$ the image of $h_{i,j}$ by $\H_{M/L}$, it is not difficult to see that the break-points (as the points where the function is not smooth) of $\H_{L/K}$ are precisely $h'_{i,j}$, $i=0,\dots,m$, $j=1,\dots,j(i)+1$. Let $r\mapsto a_{i,j}\cdot r^{p^{\alpha_{i,j}}}$ (resp. $r\mapsto a_i\cdot r^{p^{\alpha_i}}$) be the local restriction of $\H_{M/K}$ (resp. $\H_{M/L}$) over the interval $(h_{i,j},h_{i,j+1})$ (resp. $(h_i,h_{i+1})$). The equation \eqref{eq: def group ord} gives us that $p^{\alpha_{i,j}}$ (resp. $p^{\alpha_i}$) is nothing but $|G_{h_{i,j}}|$ (resp. $|H_{h_i}|$). We next note that 
 $$
 H_{h_i}=G_{h_i}\cap H=G_{h_{i,1}}\cap H=\dots =G_{h_{i,j(i)}}\cap H,
 $$
 which together with Second isomorphism theorem for groups implies 
 $$
\frac{HG_{h_{i,j}}}{G_{h_{i,j}}}\simeq \frac{H}{H_{h_i}},\quad i=0,\dots,m,\, j=1,\dots,j(i),	  
 $$
 so that in particular, 
 $$
\frac{|G_{h_{i,j}}|}{|H_{h_i}|}=\frac{|HG_{h_{i,j}}|}{|H|}, \quad i=0,\dots,m,\, j=1,\dots,j(i).
 $$
 As a direct consequence, we have
 \begin{equation}\label{eq: h in lambda 1}
 \frac{|G_{h_{i,1}}|}{|H_{h_i}|}< \frac{|G_{h_{i,2}}|}{|H_{h_i}|}< \dots < \frac{|G_{h_{i,j(i)}}|}{|H_{h_i}|}, \quad i=1,\dots,m,\, j=1,\dots,j(i)
 \end{equation}
 and 
 \begin{equation}\label{eq: h in lambda 2}
  \frac{|G_{h_{i,j(i)}}|}{|H_{h,i}|}<\frac{|G_{h_{i+1}}|}{|H_{h_{i+1}}|}.
 \end{equation}
Let $r\in (h'_{i,j}, h'_{i,j+1})$ for some $i=0,\dots,m$ and $j=1,\dots,j(i)$ (we note that the union of closures of such intervals covers $[0,1]$). Then, 
\begin{align*}
\H_{L/K}(r)&=\H_{M/K}\big(\H^{-1}_{M/L}(r)\big)=\H_{M/K}\big(\big(\frac{1}{a_i}\big)^{\frac{1}{p^{\alpha_i}}}\cdot r^{\frac{1}{p^{\alpha_i}}}\big)=a_{i,j}\cdot\big(\frac{1}{a_i}\big)^{{p^{\alpha_{i,j}-\alpha_i}}}\cdot  r^{p^{\alpha_{i,j}-\alpha_i}}\\
&=a_{i,j}\cdot\big(\frac{1}{a_i}\big)^{{p^{\alpha_{i,j}-\alpha_i}}}\cdot  r^{\frac{|G_{h_{i,j}}|}{|H_{h_i}|}},
\end{align*}
and conditions \emph{(2)} and \emph{(3)} from Definition \ref{defn: lambda} now follow from equations \eqref{eq: h in lambda 1} and \eqref{eq: h in lambda 2}.
\end{proof}

\begin{defn}
 Let $K$ be a Herbrand field. We say that a finite field extension $L/K$ is  \emph{simply ramified} if the corresponding Herbrand function $\H_{L/K}$ has exactly one break-point and its degree is equal to the degree of extension $|L:K|$.
\end{defn}

Here is the starting point of our factorization results.

\begin{thm}\label{thm: canonical towers}
 Let $K$ be a Herbrand field and let $L/K$ be a finite field extension. Then, there exists a unique tower 
 $$
 L=L_1/L_2/\dots/L_n/L_{n+1}=K'/K
 $$
 such that 
 \begin{enumerate}
  \item The field extension $L_i/L_{i+1}$ is simply ramified for all $i=1,\dots, n$. 
  \item The field extension $K'/K$ has trivial (that is, identity) Herbrand function.
  \item The decomposition $\H_{L/K}=\H_{L_n/L_{n+1}}\circ\dots\circ \H_{L_1/L_2}$ is the canonical decomposition of $\H_{L/K}$ into simple factors (in the sense of Definition \ref{def: lambda decomposition}).
 \end{enumerate}
\end{thm}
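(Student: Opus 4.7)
\emph{Plan of proof.} The plan is to work inside the Galois closure $M/K$ of $L/K$ with $G=\Gal(M/K)$ and $H=\Gal(M/L)$, and to realize the canonical tower via the ramification filtration $\{G_r\triangleleft G\}_{r\in[0,1]}$. By Lemma \ref{lem: H in lambda}, $\H_{L/K}\in\Lambda_p$; by Lemma \ref{lem: factoring in lambda}, it admits a unique canonical factorization $f_n\circ\cdots\circ f_1$ into simple factors (the cases $n\leq 1$ are essentially immediate, so I assume $n\geq 2$). The guiding observation, implicit in the proof of Lemma \ref{lem: H in lambda}, is that the break-points of $\H_{L/K}$ correspond bijectively to the jumps of the non-decreasing step function $r\mapsto HG_r$ on $[0,1)$.

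For \emph{existence}, I would enumerate these jumps as $0=r_0<r_1<\cdots<r_n<1$, set $V_k:=HG_{r_{k-1}}$ for $1\leq k\leq n+1$ (so $V_1=H$ and $V_{n+1}=HG_{<1}$), and take $L_k:=M^{V_k}$. Condition (2) reduces, after cancellation in the product formulas for $\H_{M/K}$ and $\H_{M/K'}$, to the assertion that every $\sigma\in G\setminus V_{n+1}$ satisfies $i_G(\sigma)=1$, which is automatic from $V_{n+1}\supseteq G_{<1}$. For condition (1), applying Lemma \ref{lem: H in lambda} to $L_k/L_{k+1}$ identifies the break-points of $\H_{L_k/L_{k+1}}$ with the jumps of the map $r\mapsto V_k\cdot(V_{k+1}\cap G_r)$; using the identities $V_{k+1}\cap G_r=G_r$ for $r\leq r_k$ and $V_{k+1}\cap G_r=H_r\cdot G_{r_k}$ for $r>r_k$ (both consequences of the normality of $G_r$ in $G$, where $H_r:=H\cap G_r$), a short check shows this group equals $V_k$ for $r<r_k$ and $V_{k+1}$ for $r\geq r_k$, giving a single jump at $r_k$ with rightmost slope $[V_{k+1}:V_k]=[L_k:L_{k+1}]$. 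Condition (3) then follows from transitivity of the Herbrand function along the tower together with the strict monotonicity of each $\H_{M/L_{k+1}}$, which supplies the non-crossing inequality required by Lemma \ref{lem: factoring in lambda}.

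For \emph{uniqueness} I would argue by induction on $n$, the base $n=0$ being trivial. Since Lemma \ref{lem: factoring in lambda} determines the factors $f_k$ from $\H_{L/K}$ alone, any tower satisfying (1)--(3) has $\H_{L_k/L_{k+1}}=f_k$, and it suffices to show $L_2$ is uniquely determined; the rest then follows by applying the induction hypothesis to the shorter tower $L_2/L_3/\cdots/K$, whose Herbrand function $f_n\circ\cdots\circ f_2$ is again in canonical form by Lemma \ref{lem: factoring in lambda}. Writing $V_2':=\Gal(M/L_2')$ for an arbitrary candidate, the injectivity of $\H_{M/L}$ identifies the unique inertia jump of $\H_{L/L_2'}=f_1$ with $r=r_1$; the degree identity $[V_2':H]=p^{\alpha_1}=[G_{r_1}:H_{r_1}]$ (the first local degree of $\H_{L/K}$), combined with the forced equation $H\cdot(V_2'\cap G_{r_1})=V_2'$, yields $|V_2'\cap G_{r_1}|=|G_{r_1}|$, hence $V_2'\supseteq G_{r_1}$ and then $V_2'=HG_{r_1}=V_2$ by order. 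The main obstacle will be the group-theoretic bookkeeping behind the intersection identity $HG_s\cap G_r=H_r\cdot G_s$ for $r\geq s$, used both in the existence step (to isolate the single break-point of each $\H_{L_k/L_{k+1}}$) and in the uniqueness step (to pin down $V_2'\cap G_{r_1}$); once it is in hand, everything else reduces to order-counting and a direct appeal to Lemma \ref{lem: factoring in lambda}.
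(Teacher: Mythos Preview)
Your proposal is correct and follows essentially the same route as the paper: both work in the Galois closure $M/K$, build the tower from the groups $HG_r$ (indexed by the jumps of $r\mapsto HG_r$, which the paper writes as the distinct $HG_{r_j}$), verify single break-points via the local-degree formula $|V_{k+1}\cap G_r|/|V_k\cap G_r|$, and prove uniqueness by inductively pinning down the next subgroup through an order count. Your explicit use of the Dedekind-type identity $HG_s\cap G_r=H_rG_s$ for $r\ge s$ is exactly what underlies the paper's Case~1/Case~2 split, and your recursion-on-$n$ uniqueness argument is a cosmetic repackaging of the paper's step-by-step comparison $H_{i_0+1}'=H_{i_0+1}$.
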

\begin{proof}
 First we deal with existence. Let $M/K$ be the Galois closure of $L/K$  and let us put $G=\Gal(M/K)$ and $H=\Gal(M /L)$. Let $r_0=0<r_1<\dots<r_m<r_{m+1}=1$ be the break-points of $\H_{M/K}$. We note that either $G_{r_m}=G$ either $G_{r_m}$ is properly contained in $G$, depending whether there are elements $\sigma\in G$ with $i_G(\sigma)=1$. Since each ramification group $G_{r_i}$ is a normal subgroup of $G$, $H\, G_{r_i}=G_{r_i}\, H$ is a subgroup of $G$. Let us define subgroups 
 $$
 H_1=H\lneq H_2\lneq \dots\lneq H_n\lneq H_{n+1}\leq G,
 $$
 such that $\{H_i\mid i=1,\dots,n+1\}=\{HG_{r_i}\mid i=0,\dots,m\}$. For each $i=1,\dots,n+1$, let $s(i):=\min\{j\mid H_i=H G_{r_j}\}$, so that we have in particular $H_i=H\, G_{r_{s(i)}}$ and $G_{r_{s(i)}}$ is the minimal ramification subgroup with this property. Let us further put $L_i:=M^{H_i}$. We will prove that fields $L_i$ satisfy the conditions of the theorem.

We fix $i=1,\dots,n$ and we prove first that $L_i/L_{i+1}$ is a simply ramified extension. By definition, $\H_{L_i/L_{i+1}}=\H_{M/L_{i+1}}\circ\H_{M/L_i}^{-1}$ and by Lemma \ref{lem: H in lambda} this function is in $\Lambda_p$. Let $t_1<t_2$ be two consecutive break-points of $\H_{M/L_{i+1}}$ (we recall that the break-points of $\H_{M/L_i}$ are contained in the break-points of $\H_{M/L_{i+1}}$ because $H_i\subset H_{i+1}$, and in turn all these are contained in $\{r_0,\dots,r_{m+1}\}$). Then, as follows from definition \eqref{eq: H def}, the local degree of $\H_{M/L_{i+1}}$ (resp. $\H_{M/L_i}$) over the interval $(t_1,t_2)$ is $|G_{t_1}\cap H_{i+1}|$ (resp. $|G_{t_1}\cap H_i|$), so that the local degree of $\H_{L_i/L_{i+1}}$ over the interval $(\H_{M/L_i}(t_1),\H_{M/L_i}(t_2))$ is 
\begin{equation}\label{eq: loc degree}
 \frac{|G_{t_1}\cap H_{i+1}|}{|G_{t_1}\cap H_i|}.
\end{equation}

\emph{Case 1.} $t_1$ is in the set $\{0,r_1,\dots,r_{s(i+1)-1}\}$ (hence $t_2\in\{r_1,\dots,r_{s(i+1)}\}$). Then $G_{t_1}\subset H_i\subset H_{i+1}$, so that \eqref{eq: loc degree} becomes 1. 

\emph{Case 2.} $t_1$ is in the set $\{r_{s(i+1)},\dots,r_m\}$. Then, $G_{r_{s(i+1)}}\leq G_{t_1}$ so  $G_{t_1}\, H_{i+1}=G_{t_1}\, H_i$, therefore
$$
|G_{t_1}H_{i+1}|=\frac{|G_{t_1}|\cdot |H_{i+1}|}{|G_{t_1}\cap H_{i+1}|}=\frac{|G_{t_1}|\cdot|H_i|}{|G_{t_1}\cap H_i|},
$$
which implies
\begin{equation}\label{eq: loc degree 2}
\frac{|G_{t_1}\cap H_{i+1}|}{|G_{t_1}\cap H_i|}=\frac{|H_{i+1}|}{|H_i|}.
\end{equation}
From these two cases it follows that 
\begin{equation}\label{eq: H br point}
\H_{L_i/L_{i+1}}\quad \text{is simple with a break-point in}\quad \H_{M/L_i}(r_{s(i+1)})
\end{equation}
and of degree $|H_{i+1}|/|H_i|$ which is precisely the degree of extension $L_i/L_{i+1}$. We proved that $L_i/L_{i+1}$ is simply ramified. 

For part \emph{(2)}, if $L_{n+1}=K$, or equivalently, if $H_{n+1}=G$, there is nothing to prove. However, if $H_{n+1}\lneq G$, then the corresponding formula \eqref{eq: loc degree} shows that $\H_{K'/K}$ has local degrees equal to 1, since each ramification group $G_{r_0},\dots,G_{r_M}$ is a subgroup of both $H_{n+1}$ and $G$. Hence, $\H_{K'/K}$ is trivial.

To prove part \emph{(3)}, having in mind \emph{(1)}, \emph{(2)} and \eqref{eq: H br point} it is enough to prove that for $i=1,\dots, n$, 
$$
\H_{L_i/L_{i+1}}(\H_{M/L_i}(r_{s(i+1)}))<\H_{M/L_{i+1}}(r_{s(i+2)}),
$$
or, equivalently,
$$
\H_{M/L_{i+1}}(r_{s(i+1)})<\H_{M/L_{i+1}}(r_{s(i+2)}),
$$
which is clear. 

For the uniqueness, let $L=L_1'/L_2'/\dots/L_n'/L_{n+1}'/K$ be another tower satisfying the properties in the theorem, and let $H=H_1'\leq H_2'\leq \dots \leq H_{n+1}'$ be the corresponding subgroups of $G$. We will prove that $H_i'=H_i$ for $i=1,\dots,n+1$.  

For $i=1$ this is clear, so suppose that for some $i_0\in\{1,\dots,n\}$ we have $H_i=H_i'$ for $i=1,\dots,i_0$. Because $\H_{L_{i_0}/L_{i_0+1}}=\H_{L_{i_0}'/L_{i_0+1}'}$ and because of our assumption, \eqref{eq: loc degree} implies that for each $t\in\{r_0,\dots,r_n\}$ we have $|G_{t}\cap H_{i_0+1}|=|G_{t}\cap H_{i_0+1}'|$.
Also, since $|L_{i_0'}:L_{i_0+1}'|=|L_{i_0}:L_{i_0+1}|$ (the two simply ramified extensions have the same Herbrand functions), it follows that 
$$
\frac{|H_{i_0+1}'|}{|H_{i_0}'|}=\frac{|H_{i_0+1}|}{|H_{i_0}|},\quad\text{so}\quad |H_{i_0+1}'|=|H_{i_0+1}|.
$$
But then for $t=s(i_0+1)$, 
$$
|G_{t}H_{i_0+1}'|=\frac{|G_{t}|\cdot|H_{i_0+1}'|}{|G_{t}\cap H_{i_0+1}'|}=\frac{|G_{t}|\cdot|H_{i_0+1}|}{|G_{t}\cap H_{i_0+1}|}=|G_{t}H_{i_0+1}|=|H_{i_0+1}|=|H_{i_0+1}'|.
$$
Finally, this implies that 
$$
H_{i_0+1}=G_tH_{i_0}=G_tH_{i_0}'\leq G_tH_{i_0+1}'=H_{i_0+1}',
$$
and $H_{i_0+1}=H_{i_0+1}'$.
\end{proof}

\begin{rmk}
 \emph{(1)} If $L/K$ from Theorem \ref{thm: canonical towers} is Galois, then the tower obtained in the theorem corresponds to the tower coming from the ramification groups. 
 
 \emph{(2)} It is worth noting that theorem above shows that if $L/K$ has simple $\H_{L/K}$ but is not a simply ramified extension, then it factors canonically as $L/K'/K$, where $L/K$ is simply ramified extension with $\H_{L/K'}=\H_{L/K}$ and $\H_{K'/K}=\id$.
\end{rmk}

\section{$n$-radial morphisms of quasi-smooth $k$-analytic curves}
Although many of the results that follow hold also when the underlying field $k$ is of characteristic $p>0$ (by suitably ``factoring out'', when applicable, the inseparable part of the morphisms), we will assume, for the simplicity of exposition, that throughout the rest of the article $k$ is a complete, non-archimedean, nontrivially valued and algebraically closed field of characteristic 0. 

\subsection{$n$-radial morphisms of open discs}\hfill

\subsubsection{}\label{sec: disc and annuli}
To describe the structure of quasi-smooth $k$-analytic curves, we will use the notions of \emph{triangulations} as in \cite{Duc-book}, and before doing that, we will describe the basic pieces of $k$-analtic curves, namely, open (closed) discs and annuli.

Let $T$ be a coordinate on the Berkovich affine line $\A^1_k$ (that is, we identify $\A^1_k$ with the Berkovich spectrum $\cM(k[T])$). Then, for such  $T$ we will denote by

\begin{itemize}
\item $\td(a,r)$ (resp. $\td(a,r^{-})$) - closed (resp. open) Berkovich disc centered at $T=a\in k$ and of radius $r$. So, for example, $\td(a,r)=\{x\in \A^1_k\mid |T(x)-a|\leq r \}$. We will denote by $\eta^{_T}_{a,r}$ (or just $\eta_{a,r}$ if $T$ is clear from the context) the maximal point of the disc $\td(a,r)$. This is nothing but the multiplicative seminorm corresponding to the sup-norm on $\td(a,r)\cap k$;

\item $\ta(a;r_1,r_2)$, $a\in k$ and $r_1,r_2\in \R_{>0}$, $r_1<r_2$ - an open Berkovich annulus centered at $T=a$, of inner radius $r_1$ and outer radius $r_2$;

\item $\ta[a;r_1,r_2]$, $a\in k$ and $r_1,r_2\in \R_{>0}$, $r_1\leq r_2$ - a closed Berkovich annulus centered at $T=a$, of inner radius $r_1$ and of outer radius $r_2$.

\end{itemize}
If $a=0$ we will often drop writing it from the above notation and simply write, for example, $\td(r)$ instead of $\td(a,r)$. For $a\in k$, a coordinate given by $T\mapsto T-a$ will be denoted by $T_a$. For example, we have a canonical isomorphism of $A^{_T}(a;r_1,r_2)$ and $A^{_{T_a}}(r_1,r_2)$. Finally, if no confusion arises, we may drop writing $T$ in the notation above.\\
To these spaces we assign the corresponding rings of analytic functions. We denote by
\begin{itemize}
 \item $\tco(a,r^-)$ (resp. $\tco(a,r)$) - the ring of analytic functions on  $\td(a,r^-)$ (resp. on $\td(a,r)$) written as power series in $T-a$. For example, we have
\begin{align*}
\tco(a,r^-)&=\Big\{\sum_{i\geq 0}a_i(T-a)^i\mid \forall \rho\in (0,r)\quad \lim\limits_{i\rightarrow \infty}|a_i|\rho^i=0\Big\}\\
\tco(a,r)&=\Big\{\sum_{i\geq 0}a_i(T-a)^i\mid \lim\limits_{i\rightarrow \infty}|a_i|r^i=0\Big\};
\end{align*}
\item $\tco(a;r_1,r_2)$ the ring of analytic functions on $\ta(a;r_1,r_2)$;

\item $\tco[a;r_1,r_2]$ the ring of analytic functions on $\ta[a;r_1,r_2]$.
\end{itemize}

We say that a $k$-analytic curve $X$ is an open discs (resp. closed disc, resp. open annulus, resp. closed annulus) if it is isomorphic to an open disc $\td(a,r^-)$ for some $r>0$ and $a\in k$ (resp. $\td(a,r)$, resp. $\ta(a;r_1,r_2)$, resp. $\ta[a;r_1,r_2]$). If $X$ is an open disc and $T$ a coordinate on $X$ that identify it with an open annulus $\td(a,r^-)$ then to each point $x\in X$ we can assign the radius $r(x)$ (that depends on $T$) by setting $r(x)$ to be the infimum of radii of all closed discs in $\td(a,r^-)$ that contain $T(x)$ (and similarly we define the radius function if $X$ is a close disc or open or closed annulus). If $X$ is an annulus (open or closed), its \emph{skeleton} $\cS(X)$ is the set of points in $X$ that do not admit a neighborhood isomorphic to an open disc. For example, if $X=A(0;r_1,r_2)$, then $\cS(X)=\{\eta_{0,r}\mid r\in (r_1,r_2)\}$. We note that the restriction of the Berkovich topology on the skeleton of an annulus is the topology of the real segment.

\subsubsection{}
A morphism of open discs (and similarly for closed ones) $f:\td(r^-)\to \sd(r'^-)$ comes with a morphism of the corresponding algebras $f^\sharp:\sco(r'^-)\to \tco(r^-)$ which is generated by the image of $S$. In particular, $f^\sharp(S)$ is a power series in $\tco(r^-)$, hence of the form $\sum_{i\geq 0}f_i\cdot T^i$ where the coefficients $f_i\in k$ satisfy the usual convergence conditions above. Identifying $S$ with its image $f^\sharp(S)$ we may write $S=\sum_{i\geq 0}f_i\cdot T^i$ and call this the $(T,S)$-coordinate (or just coordinate) representation of the morphism $f$. Then, for a rational point $a\in \td(r^-)\cap k$, we have $f(a)=\sum_{i\geq 0}f_i\cdot a^i$. Furthermore, by changing $S$ to $S_{f(0)}$, we may (and most of the time will, unless otherwise stated) assume that $f_0=0$. Such a pair of coordinates $(T,S)$ will be called compatible. Note that for any $a\in \td(r)\cap k$, the pair of coordinates $(T_a, S_{f(a)})$ is compatible. We recall that for a morphism $f:\td(r^-)\to \sd(r'^-)$ to be finite of degree $n$ is equivalent to say that in $(T,S)$-representation of $f$ we have $|f_n|\cdot r^n=r'$ and $n$ is minimal with this property.

\begin{rmk}\label{rmk: general morphisms}
 Any nonconstant (\ie not in $k$) element in $\tco(0,r^-)$ of the form $f(T)=\sum_{i\geq 1}f_i\cdot T^i$ induces a surjective morphism $\sco(0,r^-)\to\tco(0,r'^-) $, where $S=f(T)$ and $r':=\sup\{|f_i|\cdot r^i\mid i\geq 1\}$. In general this morphism is quasi-finite (\cite[Section 3.1]{BerCoh}) and it is finite if and only if there exists an $i\geq 1$ such that $|f_i|\cdot r^i=r'$. The smallest such $i$ is then the degree of the morphism.
\end{rmk}

\subsubsection{}\label{ssec:valuation polygons} Suppose we are given an element $f(T)=\sum_{i\geq 0}f_i\cdot T^i\in \tco(r^-)$. Then we may study the valuation polygon $v(f(T),\cdot)$ of the the function  $\sum_{i\geq 1}f_i\cdot T^i$ which is a real function on the interval $(-\log r,\infty)$ given by 
$$
\rho\mapsto v(f(T),\rho):=\min\{-\log|f_i|+i\cdot\rho\mid i\geq 0\}=\min\{-\log|f(a)|\mid a\in k, -\log|a|=\rho\}.
$$
It is a standard fact that this function is a continuous, concave ($\cap$-shape) piece-wise affine with integral slopes. If $S=f(T)$ is a representation of a finite morphism of open discs, then the valuation polygon has finitely many break-points (since $f(T)$ has finitely many zeroes), hence finitely many slopes. The highest slope is then the degree of the morphism.

The multiplicative version of the valuation polygon is the following:

\begin{defn}
 Let $f:\td(r^-)\to \sd(r'^-)$ be a finite morphism of open discs. The function $\pf_{f,(T,S)}:[0,r]\to [0,r']$ defined by $\pf_{f,(T,S)}(0)=0$ and $\pf_{f,(T,S)}(r)=r'$ and for every $\rho\in (0,r)$ by $\pf_{f,(T,S)}=\max\{|f_i|\cdot\rho^i\mid i\in \N\}$ is called the \emph{$(T,S)$-profile} of $f$. 
 
 A point $\rho\in (0,r)$ is called a \emph{break-point} if $-\log \rho$ is a break-point of the valuation polygon of the $(T,S)$-representation of $f$. We will sometimes include $0$ and $r$ in the set of the break-points of $\pf_{f,(T,S)}$ and call these the trivial breaks.
 
 We call the slopes of the valuation polygon of $(T,S)$-representation of $f$ the \emph{dominating terms} and denote their set by $\Dc_{f,(T,S)}$.
\end{defn}

\begin{rmk} Strictly speaking, $\pf_{f,(T,S)}$ depends only on coordinate $T$ and not on its compatible pair $S$ (as long as $T$ and $S$ preserve the corresponding radii of the discs), but for informative reasons we keep both coordinates in notation.
\end{rmk}

\begin{rmk}\label{rmk: domin terms}
Let $f:\td(r^-)\to \sd(r'^-)$ be a finite morphism of open discs.

\emph{(1)} From the relation with the valuation polygon it follows that the $(T,S)$-profile of a finite morphisms of open discs is always a continuous and piecewise $|k|$-monomial function. Here is its more precise description that will be useful later on. Let $i_1<\dots<i_n$ be all the elements of $\Dc_{f,(T,S)}$ in the increasing order. Then, there exists real numbers $0=r_1<\dots<r_n$ such that the function $\pf_{f,(T,S)}$ is given by
$$
\pf_{f,(T,S)}=\begin{cases}
|f_{i_j}|\cdot\rho^{i_j},\quad \rho\in [r_{j},r_{j+1}], \quad j=0,\dots,n-1,\\
|f_{i_n}|\cdot\rho^{i_n},\quad \rho\in [r_n,r).
\end{cases}
$$
Moreover, for $j=1,\dots,n-1$ the equality $|f_{i_j}|\cdot r_{j+1}^{i_j}=|f_{i_{j+1}}|\cdot r_{j+1}^{i_{j+1}}$ holds.

Sometimes we will refer to the interval $[r_j,r_{j+1}]$ (or its interior) as the {\emph interval of dominance of $i_j$}. If there is no ambiguity, if we talk about valuation polygons, we will also call $(-\log r_j,-\log r_{j+1})$ the \emph{interval of dominance of $i_j$}.

\emph{(2)} It follows that $i\in \Dc_{f,(T,S)}$ if and only if there exists $\rho_1<\rho_2\in (0,r)$ such that  for every $\rho\in (\rho_1,\rho_2)$ and for every $j\neq i$ we have $|f_i|\cdot\rho^i>|f_j|\cdot\rho^j$.

\emph{(3)} For every $i\in \Dc_{f,(T,S)}$ and $j<i$ we have $|f_j|\cdot r^j<|f_i|\cdot r^i$. Indeed, by point \emph{(2)} above, for some $\rho\in (0,r)$ we have $|f_j|\cdot \rho ^j<|f_i|\cdot\rho ^i$, hence $|f_j|/|f_i|<\rho^{i-j}<r^{i-j}$.  

\emph{(4)} If for some $\rho_0\in [0,r)$, $|f_i|\cdot\rho_0^i>|f_j|\cdot\rho_0^j$ holds and $i<j$, then $|f_i|\cdot\rho^i>|f_j|\cdot\rho^j$ for all $\rho\in [0,\rho_0]$. This is because $|f_i|/|f_j|>\rho_0^{j-i}$ implies $|f_i|/|f_j|>\rho^{j-i}$ for all $\rho\in [0,\rho_0)$. On the other side, if $|f_i|\cdot\rho_0^i>|f_j|\cdot\rho_0^j$ and $i>j$, then $|f_i|\cdot\rho^i>|f_j|\cdot\rho^j$ for all $\rho\in [\rho_0,r]$. This is because $\rho^{i-j}>\rho_0^{i-j}>|f_j|/|f_i|$.
\end{rmk}
\subsubsection{}
Suppose that $f:\td(r^-)\to \sd(r'^-)$ is a finite morphism of open discs and $S=\sum_{i\geq 1}f_i\cdot T^i=\sum_{i\geq 1}f_i(0)\cdot T^i$ be its $(T,S)$-coordinate expression. For $a\in \td(r)\cap k$, we also have a $(T_a,S_{f(a)})$-coordinate representation of the form $S_{f(a)}=\sum_{i\geq 1}f_i(a)\cdot T^i_a$. 

\begin{defn}\label{def: coefficient}
 We call the function $f_{[i]}:\td(r)\to k$ given by $a\mapsto f_{[i]}(a):=f_i(a)$ the \emph{$i$-coordinate function} of $f$ (with respect to pair $(T,S)$).
\end{defn}

\begin{lemma}\label{lem: coordinate description} For each $i\geq 1$ and $a\in \td(r)\cap k$ we have:
 $$
 f_{[i]}(a)=\frac{1}{i!}\cdot\frac{d^iS}{dT^i}(a)=\sum _{j\geq 0}\binom{i+j}{j}\cdot a^j\cdot f_{i+j}.  
$$
In particular, there exists $b\in \td(r^-)\cap k$ such that for all $a\in k$ with $|b|=|a|$ we have $|f_{[i]}(b)|\geq |\binom{i+j}{j}||a|^j|f_{i+j}|$, for all $j\geq 0$. 

Furthermore, for every interval $(s_1,s_2)\subseteq(0,r)$, there exists a subinterval $(s',s'')$ such that for every $a\in \td(r^-)\cap k$, $|a|\in (s',s'')$, we have $|f_{[i]}(a)|\geq |\binom{i+j}{j}|\cdot|a|^j\cdot|f_{i+j}|$, for all $j\geq 0$.
\end{lemma}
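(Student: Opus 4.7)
The plan is to first obtain the explicit coordinate formula by a direct Taylor expansion, and then to derive the norm estimates from the standard non-archimedean Newton polygon theory applied to the power series $G(T) := \sum_{j\geq 0}\binom{i+j}{j} f_{i+j}\, T^j$ in the variable $a$.

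First I would substitute $T = T_a + a$ (where $T_a := T-a$) into $S = \sum_{n\geq 1} f_n T^n$, expand $(T_a+a)^n$ by the binomial theorem, and subtract the constant $f(a) = \sum_{n\geq 1} f_n a^n$ in order to obtain the expression of $S_{f(a)}$ as a power series in $T_a$. Collecting the coefficient of $T_a^i$ and reindexing with $n = i+j$ gives the identity $f_{[i]}(a) = \sum_{j\geq 0} \binom{i+j}{j} a^j f_{i+j}$. The equality with $\tfrac{1}{i!}(d^iS/dT^i)(a)$ is then immediate from term-by-term differentiation of $S$ together with $\binom{i+j}{j} = \tfrac{(i+j)!}{i!\,j!}$.

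For the norm estimates, the key observation is that $a \mapsto f_{[i]}(a)$ is precisely the analytic function $G(T)$ above; it converges on $\td(r^-)$ because $|\binom{i+j}{j}|\leq 1$ (binomials being integers) and $|f_{i+j}|\rho^{i+j}\to 0$ for every $\rho<r$. I would then invoke the standard fact that on any compact subinterval of $(0,r)$ the valuation polygon of $G$ has only finitely many break-points (equivalently, $G$ has only finitely many zeros on each closed annulus). Hence, inside any $(s_1,s_2)\subseteq (0,r)$ I can find a subinterval $(s',s'')\subseteq (s_1,s_2)$ on which a single index $j_0$ strictly dominates, namely $|\binom{i+j_0}{j_0}|\rho^{j_0}|f_{i+j_0}| > |\binom{i+j}{j}|\rho^j|f_{i+j}|$ for every $j\neq j_0$ and every $\rho\in (s',s'')$. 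The strict ultrametric inequality then forces $|G(a)| = |\binom{i+j_0}{j_0}||a|^{j_0}|f_{i+j_0}| = \max_j |\binom{i+j}{j}||a|^{j}|f_{i+j}|$ for every $a$ with $|a|\in (s',s'')$, which in particular bounds each individual summand from below, giving the third assertion. The middle assertion follows by applying this to $(s_1,s_2) = (0,r)$ and choosing any $b$ with $|b|\in (s',s'')$.

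The only mildly delicate point is the finiteness of breaks of the valuation polygon of $G$ on compact subintervals of $(0,r)$, but this is the standard Newton polygon property of power series converging on $\td(r^-)$; modulo that, the argument reduces to the coordinate bookkeeping described above together with a single application of the strict ultrametric inequality, and I do not foresee any genuine obstacle.
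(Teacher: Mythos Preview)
Your proposal is correct and follows essentially the same route as the paper: the explicit formula is obtained by the Taylor/binomial expansion of $S$ in $T_a=T-a$, and the norm estimates come from the Newton polygon of the power series $G(T)=\sum_{j\geq 0}\binom{i+j}{j}f_{i+j}T^j$, choosing a subinterval on which a single term strictly dominates (equivalently, on which $G$ has no zeros), so that the ultrametric inequality gives $|f_{[i]}(a)|=\max_j|\binom{i+j}{j}|\,|a|^j\,|f_{i+j}|$. The paper phrases the last step as ``choose $(s',s'')$ so that $f_{[i]}(T)$ has no zeros of norm in $(s',s'')$,'' which is exactly your strict-dominance condition.
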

\begin{proof}
The Taylor expansion of the function $S$ with respect to $T-a$ is 
$$
S=f(a)+\sum_{i\geq 1}\frac{1}{i!}\cdot\frac{d^iS}{dT^i}(a)\cdot(T-a)^i
$$
from which we obtain $S_{f(a)}=\sum_{i\geq 1}\frac{1}{i!}\cdot\frac{d^iS}{dT^i}(a)\cdot T_a^i$, that is $f_{[i]}(a)=\frac{1}{i!}\cdot\frac{d^iS}{dT^i}(a)$. On the other side we may also write, 
$$
S=\sum_{i\geq 1}f_i\cdot T^i=\sum_{i\geq 1}f_i\cdot(T_a+a)^i=f(a)+\sum_{i\geq 1}\bigg(\sum _{j\geq 0}\binom{i+j}{j}\cdot a^j\cdot f_{i+j}\bigg)\cdot T_a^i,
$$
which gives the second equality. The last two assertions follow from the definition of valuation polygon and from the fact that we can choose a subinterval $(s',s'')$ such that $f_{[i]}(T)$ does not have any zeroes of the norm belonging to $(s',s'')$. Then the valuation polygon of $f_{[i]}(T)$ has only one slope over $(-\log s'', -\log s')$ so that $|f_{[i]}(a)|=\max\limits_{i\geq 1}|\binom{i+j}{j}|\cdot|a|^j\cdot|f_{i+j}|$, for every $a$ such that $|a|\in (s',s'')$.
\end{proof}
It follows that coordinate functions extend to analytic functions on the whole $k$-analytic disc $\td(r)$, a fact that we will use freely in what follows.

\subsubsection{}

\begin{defn}\label{def: n-radial}
Let $f:\td(r^-)\to \sd(r'^-)$ be a morphism of open discs and let $n\in \N$.

 (1) We say that $f$ is an $n$-radial morphism if it is finite and there exists $s\in (0,r)$ such that for all $a\in \td(r)(k)$ the functions $\pf_{f,(T_a,S_{f(a)})}$ coincide on the interval $(s,r)$ and the corresponding valuation polygon has exactly $n$ slopes on the interval $(-\log r, -\log s)$. The infimum of such numbers $s$, which we will denote by $\b_{f,n}$ is called the border of $n$-radiality. By definition, we will say that $f$ is $0$-radial if it is finite. 
 
 (2) We say that $f$ is weakly $n$-radial if it is $(n-1)$-radial, and for every $a\in \td(r^-)(k)$, the $n$ first slopes of the $(T_a,S_{f(a)})$-valuation polygon of $f$ are the same.  
 
  If $f$ is a (weakly) $n$-radial morphism, the set of the first $n$ dominating terms of its valuation polygon (that is, the last $n$ dominating terms in its profile) expressed in some compatible coordinates is denoted by $\Dc_{f,n}$. 
\end{defn}
%
\begin{rmk}
 The notion of being (weakly) $n$-radial for a morphism $f$ does not depend on the chosen compatible coordinates $(T,S)$ (independent of whether they preserve the radii of the discs involved or not). However, if the coordinates do not preserve the radii of the discs, the border of $n$-radiality can change.
\end{rmk}

\begin{rmk}
 The relation between weakly $n$-radial and $n$-radial is suggested by the terminology. Namely, if $n\in \N$, every $n$-radial morphism is weakly $n$-radial by definition. Contrary does not hold, as shows the morphism $f:\td(1^-)\to \sd(1^-)$, given by 
 $$
 S=\alpha\cdot T+T^{2\cdot p},
 $$
 where we assume that $p\neq 2$ is the residual characteristic of $k$, and $|p|<|\alpha|<1$. It is not difficult to show that $f$ is weakly 1-radial (in fact, all finite morphisms of open discs are), while it is not 1-radial. See \cite[Remark 2.12]{Ba-Bo} for details.
\end{rmk}
%

\begin{rmk}\label{rmk: properties n-radial} We collect some of the more obvious properties of an $n$-radial morphism $f:D_1\to D_2$. For a more detailed study we refer to \cite{Ba-Bo}.
%

\emph{(1)} Let us denote the elements of $\Dc_{f,n}$ by $i_1<\dots <i_n$, and let $[r_j,r_{j+1}]$ be the interval of dominance of $i_j$, for $j=1,\dots,n$. Then, Remark \ref{rmk: domin terms} \emph{(1)} implies that for every $a\in \td(r^-)(k)$, the restriction of $\pf_{f,(T_a,S_{f(a)})}$ on the interval $[r_j,r_{j+1}]$ is given by $\rho\mapsto |f_{i_j}|\cdot \rho^{i_j}$.

\emph{(2)} Let $i\in \Dc_{f,n}$ and let $(r,r')$ be its interval of dominance. Then, for every $a\in \td(r^-)(k)$, every $\rho\in (r,r')$ and every $j\in \N$, $j\neq i$
$$
|f_{[j]}(a)|\cdot\rho^j\leq |f_i|\cdot\rho^i.
$$
Moreover, if $j<i$ we have the strict inequality. Explicitly, if $b\in \td(r^-)(k)$ with $|b|=|a|$ and such that $|f_{[j]}(b)|\geq \max |\binom{j+l}{l}|\cdot |a|^{j}\cdot |f_{j+l}|$ as in Lemma \ref{lem: coordinate description}, then it follows that for every $l\in\N$
$$
\left|\binom{j+l}{l}\right|\cdot|a|^j\cdot|f_{j+l}|\cdot\rho^j\leq |f_i|\cdot\rho^i,
$$
with strict inequality holding if $j<i$.
%

\emph{(3)} If $n\geq 2$ then $f$ is automatically $(n-1)$-radial, the boundary of $(n-1)$-radiality being the number $r$, where $(r,r')$ is the dominance interval of $\min\Dc_{f,n}$.

\emph{(4)} Suppose that $f:D_1\to D_2$ is a weakly $n$-radial morphism, $n\geq 2$ and $i\in\{1,\dots,n\}$. Let $b$ be the border of $i$-radiality of $f$ and let $D$ be an open disc in $D_1$ of radius $b$. Then, $f_{|D}$ is weakly $(n-i)$-radial. If $c\in (0,1)$ and $c>b$ while $c$ is smaller than the border of $(i-1)$-radiality (taken to be 1 if $i=1$), then the restriction of $f$ over a disc of radius $c$ is weakly $(n-i+1)$-radial.
\end{rmk}

The main arithmetic properties of $n$-radial morphisms are given in the following

\begin{lemma}\label{lem: n-radial properties}
 Let $f:\td(r^-)\to \sd(r'^-)$ be an $n$-radial morphism of open discs and let $i_1<\dots<i_n$ be its dominating terms. Then, 
 \begin{enumerate}
 \item For $l=1,\dots,n$ the coordinate function $f_{[i_l]}(T)$ is invertible on $\td(r)$;
  \item Assume $\text{char}(\wtilde{k})=p>0$. For each $l\in\{0,\dots,n\}$, $i_l$ is a power of $p$ and $p^{l}\,|\, i_l$. In particular, $i_n=\deg(f)$ is a power of $p$;
  \item Assume $\text{char}(\wtilde{k})=p>0$. Let $i\in \Dc_{f,n}$. 
  \begin{enumerate} 
  \item Suppose for some $j\neq i$, $|f_j|\cdot r^j\geq |f_i|\cdot r^i$. Then, $j>i$ and $i\,|\, j$;  
  \item Suppose for some $j\neq i$, $|f_j|\cdot r^j> |f_i|\cdot r^i$. Then $|j/i|<1$ and $$
  |\frac{j}{i     }|\cdot |f_j|\cdot r^j=|\binom{j}{i}|\cdot |f_j|\cdot r^j\leq|f_i|\cdot r^i.
  $$
  \end{enumerate}  
 \end{enumerate}
\end{lemma}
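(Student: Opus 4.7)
\emph{Part (1).} The strategy is to show that, for each dominating index $i_l$, the coordinate function $f_{[i_l]}$ has constant nonzero absolute value on $\td(r)$. Given any $a\in\td(r^-)(k)$, the $n$-radial hypothesis forces the profiles $\pf_{f,(T_a,S_{f(a)})}$ and $\pf_{f,(T,S)}$ to coincide on $(s,r)$, so the dominating indices $i_1<\dots<i_n$ and their dominant coefficient norms are the same for the two expansions. The coefficient of $T_a^{i_l}$ being $f_{[i_l]}(a)$ by Lemma~\ref{lem: coordinate description}, this forces $|f_{[i_l]}(a)|=|f_{i_l}|$. Since $k$-points are dense in $\td(r^-)$ and $f_{[i_l]}$ is analytic, continuity together with the maximum modulus principle gives $|f_{[i_l]}|\equiv|f_{i_l}|>0$ on the whole closed disc $\td(r)$, whence $f_{[i_l]}$ is invertible.

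\emph{Part (2).} Expanding $f_{[i_l]}(T)=\sum_{m\ge 0}\binom{i_l+m}{m}f_{i_l+m}T^m$ and using that its sup-norm on $\td(r)$ equals $|f_{i_l}|$ (from Part (1)) yields the basic pointwise bound
\[
\bigl|\tbinom{j}{i_l}\bigr|\cdot|f_j|\cdot r^j\ \le\ |f_{i_l}|\cdot r^{i_l}\qquad\text{for all }j\ge i_l.
\]
Applied with $j=i_{l+1}$, the strict jump of the profile at $r_{l+1}$ forces $|\tbinom{i_{l+1}}{i_l}|<1$, i.e.\ $p\mid\tbinom{i_{l+1}}{i_l}$; the same reasoning with every smaller dominating index $i_{l'}$ yields $p\mid\tbinom{i_{l+1}}{i_{l'}}$. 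I would then induct on $l$: granted $i_{l'}=p^{\alpha_{l'}}$ with $\alpha_{l'}\ge l'$ for $l'\le l$, Kummer's theorem translates the simultaneous divisibilities $p\mid\tbinom{i_{l+1}}{p^{\alpha_{l'}}}$ into base-$p$ digit constraints on $i_{l+1}$ which, together with the strict increase of the exponents (analogous to Definition~\ref{defn: lambda}(3)), force $i_{l+1}$ to be a power of $p$ divisible by $p^{l+1}$. Alternatively, restricting $f$ to a subdisc of radius $\rho\in(r_l,r_{l+1})$ yields a finite morphism of degree $i_l$ which is radial, and one invokes the classical fact that the degree of a radial morphism of open discs in positive residual characteristic is a power of $p$ (see \cite{Ba-Bo}).

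\emph{Part (3).} The inequality $j>i$ in (a) follows from Remark~\ref{rmk: domin terms}(3): the dominance of $i$ on its interval gives $|f_j|\rho^j<|f_i|\rho^i$ there, and for $j<i$ the inequality $\rho^{j-i}>r^{j-i}$ at $\rho<r$ propagates this to $\rho=r$, contradicting the hypothesis. For $i\mid j$: by Part (2), $i=p^\alpha$ and each previous dominating index $i_{l'}$ is a smaller power of $p$. Combining the basic bound with each such $i_{l'}$ and with $|f_{i_{l'}}|r^{i_{l'}}<|f_i|r^i\le|f_j|r^j$ produces $|\tbinom{j}{i_{l'}}|<1$ for each $l'\le l$ (the borderline equality $|f_j|r^j=|f_i|r^i$ at $l'=l$ being ruled out by a separate direct argument showing it would create a spurious break-point after recentering). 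By Lucas's theorem, the vanishing $\tbinom{j}{p^{\alpha_{l'}}}\equiv 0\pmod p$ for all $l'\le l$ forces the relevant base-$p$ digits of $j$ below position $\alpha$ to vanish, i.e.\ $p^\alpha\mid j$. For (b), once $i\mid j$ is established and $i=p^\alpha$, writing $j=ki$ gives $\tbinom{j}{i}=k\tbinom{p^\alpha k-1}{p^\alpha-1}$, and Lucas yields $\tbinom{p^\alpha k-1}{p^\alpha-1}\equiv 1\pmod p$, so $|\tbinom{j}{i}|=|k|=|j/i|$. Substituting into the basic bound and using the strict hypothesis $|f_j|r^j>|f_i|r^i$ then gives $|j/i|\cdot|f_j|r^j\le|f_i|r^i<|f_j|r^j$, forcing $|j/i|<1$.

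The main obstacle is Part (3)(a): the divisibility $i\mid j$ is not a formal consequence of the single binomial bound from Part (1), since by Lucas $|\tbinom{j}{i}|$ can equal $1$ even when $i\nmid j$. The argument must therefore exploit simultaneously the bounds from every smaller dominating index and carefully rule out the borderline case in which the hypothesis holds with equality. This bookkeeping of $p$-adic digit constraints via Kummer and Lucas is the technical heart of the proof and is what genuinely couples the binomial arithmetic to the geometric input encoded in the $n$-radial hypothesis.
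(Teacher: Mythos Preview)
Your Part~(1) is correct and essentially the paper's argument, and the ``basic bound'' $\bigl|\binom{j}{i_l}\bigr|\,|f_j|\,r^{j}\le |f_{i_l}|\,r^{i_l}$ for $j\ge i_l$ that you extract from it is genuine; the paper uses precisely this in~(3)(b), and your treatment of~(3)(b) is fine.

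The real gap is in Parts~(2) and~(3)(a). Your plan is to collect the basic bounds at the dominating indices $i_{l'}=p^{\alpha_{l'}}$, $l'\le l$, obtaining $p\mid\binom{N}{p^{\alpha_{l'}}}$ for $N=i_{l+1}$ (resp.\ $N=j$), and then argue via Lucas/Kummer that $N$ must be a $p$-power (resp.\ divisible by $p^{\alpha_l}$). But $p\mid\binom{N}{p^{\alpha}}$ says exactly that the base-$p$ digit of $N$ at position $\alpha$ vanishes, and nothing more. Since the $\alpha_{l'}$ need not be consecutive (the lemma only asserts $\alpha_{l'}\ge l'$), these constraints do not kill the digits of $N$ at the remaining positions below $\alpha_l$, nor do they constrain higher positions. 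For example, with $l=1$ and $i_1=p^{2}$, the number $N=p^{3}+p$ satisfies $p\mid\binom{N}{p^{2}}$ yet is neither a $p$-power nor divisible by $p^{2}$; with $i_1=p$, $i_2=p^{3}$, the number $N=p^{4}+p^{2}$ satisfies both $p\mid\binom{N}{p}$ and $p\mid\binom{N}{p^{3}}$ but again fails. The borderline argument you allude to does not help, and your alternative for~(2) (restricting to a subdisc to get a radial morphism) fails because below the border $\b_{f,n}$ the profiles need not agree, so the restriction is not radial.

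The paper's argument is different in kind: it does \emph{not} use the coefficients $f_{[i_{l'}]}$ at dominating indices, but rather a coefficient $f_{[m]}$ at an index $m$ \emph{strictly below} $i_l$. For~(2), if $i_l$ is not a $p$-power one picks $0<m<i_l$ with $\bigl|\binom{i_l}{m}\bigr|=1$ (Kummer); for~(3)(a), if $i\nmid j$ one takes $m=j\bmod i$, and since $i=p^{\alpha}$ Kummer again gives $\bigl|\binom{j}{m}\bigr|=1$. Lemma~\ref{lem: coordinate description} then shows that for $|a|$ close to $r$ one has $|f_{[m]}(a)|$ at least roughly $|f_{i_l}|\,r^{i_l-m}$ (resp.\ $|f_j|\,r^{j-m}$), so $|f_{[m]}(a)|\cdot(r\rho)^{m}>|f_{i_l}|\cdot(r\rho)^{i_l}$ for $r\rho$ in the dominance interval of $i_l$; since $m<i_l$ this violates $n$-radiality. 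The point is that the relevant information sits in the \emph{low} coefficients after recentering, not in those at the dominating indices.
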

\begin{rmk}\label{rmk: kummer} We will use  Kummer theorem on valuation of binomial coefficients: If $\alpha>\beta$ are natural numbers and $a\in \N$ is the highest natural number such that $p^a$ divides $\binom{\alpha}{\beta}$, then $a$ is equal to the number of "carry-overs" when adding $\beta$ and $\alpha-\beta$ in the basis $p$. For example,
\begin{enumerate} 
\item All the numbers of the form $\binom{p^\alpha}{\beta}$, where $0<\beta< p^{\alpha}$, are divisible by $p$ and if $\alpha$ is not a power of $p$, there is always a $\beta\in \{1,\dots,p^\alpha-1\}$ such that $\binom{\alpha}{\beta}$ is not divisible by $p$. 
\item We have the equality $\left|\binom{a}{b}\right|=\left|\binom{a\cdot p^\alpha}{b\cdot p^\alpha}\right|$, where $a,b, \alpha\in \N$.
\end{enumerate}
\end{rmk}

\begin{proof}  
Let $\b_{f,n}$ be the border of $n$-radiality of $f$ and let us fix an $l\in \{0,\dots,n\}$.

{\emph{(1)}} It is enough to prove that $f_{[i]}(T)$ has no zeros on $\td(r)$, and for this it is enough to prove that $|f_{[i]}(a)|$ is constant for every $a\in \td(r)\cap k$. But the latter follows from the definitions of $n$-radiality and the remark that follows it. 

\emph{(2)} Suppose that $i_l$ is not a power of $p$, let $(s_1,s_1')\subset (0,r)$ be its interval of dominance, and $\rho\in (0,1)$ such that $\rho\cdot r\in (s_1,s_1')$. By Kummer theorem stated above there is a natural number $m$ such that $\binom{i_l}{m}$ is not divisible by $p$, hence has norm 1. Lemma \ref{lem: coordinate description} then implies that for $a\in \td(r^-)\cap k$ and $|a|$ close enough to $r$, the coefficient $f_{[m]}(a)$ will have the norm arbitrarily close to $r^{i_l-m}\cdot|f_{i_l}|$, so that in particular, since $m<i_l$, $|f_{[m]}(a)|\cdot\rho^m>r^{i_l-m}\cdot|f_{i_l}|\cdot\rho^{i_l}$. Then, $|f_{[m]}(a)|\cdot(r\cdot\rho)^m>|f_{i_l}|\cdot(r\cdot\rho)^{i_l}$, which is a contradiction. 
%
%

Since $p^0\,|\, i_0$, then if $p^j\,|\,i_j$, $i_{j+1}>i_j$ and $i_{j+1}$ being a power of $p$ imply that $p^{j+1}\,|\, i_{j+1}$.

\emph{(3)} It is clear that we may assume that $j>i=p^{\alpha_l}$ for both assertions. Let $(s,s')$ be the interval of dominance of $i$ and $\rho\in (0,1)$ such that $\rho\cdot r\in (s,s')$. \emph{(a)}. Suppose that $i\nmid j$ and let $j_0\equiv j \mod i$. Clearly $j_0<i$ and by Kummer's theorem $|\binom{j}{j_0}|=1$. Then, by Lemma \ref{lem: coordinate description} (see also part \emph{(2)} in the proof) there is an $a\in \td(r^-)\cap k$ such that $|f_{[j_0]}(a)|$ is arbitrarily close to $|f_{j}|\cdot r^{j-j_0}$, hence there is an $a\in \td(r^-)\cap k$ such that  $|f_{[j_0]}(a)|>|f_{j}|\cdot r^{j-j_0}\cdot \rho ^{i-j_0}\geq |f_i|\cdot(r\cdot \rho)^{i-j_0}$ so that $|f_{[j_0]}(a)|\cdot(r\rho )^{j_0}>|f_i|\cdot(r \rho )^i$, which is a contradiction.
\emph{(b)} By \emph{(a)} we have that $i\mid j$. Suppose that $|j/i|\cdot|f_j|\cdot r^j>|f_i|\cdot r^i$ and let $j=\sum_{s\geq s_0}n_s\cdot p^s$, $n_{s_0}\neq 0$ and $n_s\in\{0,\dots,p-1\}$, be the $p$-adic expansion of $j$. Then, $j/i=\sum_{s\geq s_0}n_s\cdot p^{n_s-\alpha_l}$ and $|j/i|=p^{-(s_0-\alpha_l)}$. On the other side, the $p$-adic expansion of $j-i$ is $\sum_{s>s_0}n_s\cdot  p^s+(n_{s_0}-1)p^{s_0}+\sum_{s_0>s\geq \alpha_l}(p-1)\cdot p^s$ and by Kummer's theorem we have $|\binom{j}{i}|=p^{-(s_0-\alpha_l)}$, as well. Then, as before, for $a\in \td(r^-)\cap k$ and $|a|$ close enough to $r$, we have by Lemma \ref{lem: coordinate description} that $|f_{[i]}(a)|\geq |\binom{j}{i}|\cdot|a|^{j-i}|\cdot f_{j}|>|f_i|\cdot r^{i-i}=|f_i|$ which contradicts \emph{(1)}. 
\end{proof}

\begin{cor}\label{cor: main all ineq}
 Let $f:\td(r^-)\to \sd(r'^-)$ be an $n$-radial morphism and $i\in \Dc_{f,n}$. Then, for every $m\in \N$ we have
 $$
 \left|\binom{m\cdot i}{i}\right|\cdot |f_{m\cdot i}|\cdot r^{m\cdot i}=|m|\cdot |f_{m\cdot i}|\cdot r^{m\cdot i}\leq |f_i|\cdot r^i
 $$
\end{cor}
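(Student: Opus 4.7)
The plan is to reduce the corollary to Lemma \ref{lem: n-radial properties}, using Kummer's theorem as recorded in Remark \ref{rmk: kummer} to rewrite the binomial coefficient. The statement contains two independent assertions: the equality of norms $\left|\binom{mi}{i}\right|=|m|$, and the inequality itself.

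For the equality I would first invoke Lemma \ref{lem: n-radial properties}(2), which under the assumption $\mathrm{char}(\wtilde{k})=p>0$ gives $i=p^{\alpha}$ for some $\alpha\geq 0$ (in residue characteristic zero the equality is trivial, as every nonzero integer has absolute value one and no binomial coefficient is divisible by anything relevant). Then Remark \ref{rmk: kummer}(2), applied with $(a,b,\alpha)=(m,1,\alpha)$, yields
\[
\left|\binom{mi}{i}\right|=\left|\binom{m\cdot p^{\alpha}}{1\cdot p^{\alpha}}\right|=\left|\binom{m}{1}\right|=|m|.
\]

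For the inequality I would split on $m$. The cases $m=0$ and $m=1$ are immediate: for $m=0$ the left-hand side is zero (either because $|m|=0$ or, having fixed $f_0=0$, because $f_{mi}=0$), and for $m=1$ both sides coincide. Assume then $m\geq 2$ and set $j:=mi>i$. Compare $|f_j|\cdot r^j$ to $|f_i|\cdot r^i$. If $|f_j|\cdot r^j\leq |f_i|\cdot r^i$, then since $|m|\leq 1$ we obtain $|m|\cdot|f_{mi}|\cdot r^{mi}\leq |f_i|\cdot r^i$ at once. Otherwise $|f_j|\cdot r^j>|f_i|\cdot r^i$, which is precisely the hypothesis of Lemma \ref{lem: n-radial properties}(3)(b); that lemma, applied with this choice of $j$, gives
\[
\left|\tfrac{j}{i}\right|\cdot |f_j|\cdot r^j\leq |f_i|\cdot r^i,
\]
and since $j/i=m$ this is exactly the desired inequality.

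There is no substantive obstacle: the corollary merely packages the two clauses of Lemma \ref{lem: n-radial properties}(3) into a single uniform statement, the bridge being the Kummer-theoretic identification $|\binom{mi}{i}|=|m|$ that is made possible by $i$ being a power of $p$.
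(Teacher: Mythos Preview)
Your proof is correct and follows essentially the same route as the paper's: split according to whether $|f_{mi}|\cdot r^{mi}$ exceeds $|f_i|\cdot r^i$ and invoke Lemma~\ref{lem: n-radial properties}(3)(b) in the nontrivial case. You are simply more explicit than the paper, which does not separately justify the equality $\left|\binom{mi}{i}\right|=|m|$; your derivation of it from Remark~\ref{rmk: kummer}(2) (using that $i$ is a $p$-power by Lemma~\ref{lem: n-radial properties}(2)) is exactly the intended mechanism.
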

\begin{proof}
 If $|f_{m\cdot i}|\cdot r^{m\cdot i}>|f_i|\cdot r^i$, then the statement is covered by the previous lemma. If $|f_{m\cdot i}|\cdot r^{m\cdot i}\leq|f_i|\cdot r^i$, then the claim is obvious.
\end{proof}

\subsubsection{}
The following class of morphisms was introduced and studied by M. Temkin in \cite{TeMu}.
\begin{defn}
 A finite morphism of open discs $f:\td(r)\to \sd(r')$ is said to be \emph{radial} if there exists $n\geq1$ such that $f$ is $n$-radial with $\b_{f,n}=0$. We will say that $n$ is characteristic (of radiality) of $f$. 
 
 If $f$ is radial then the function $\pf_f:=\pf_{f,(T,S)}$ does not depend on the couple of compatible coordinates $(T,S)$ (preserving the radii of discs) and will be called the \emph{profile} of $f$.
 \end{defn}
\begin{defn}\label{def: simple}
 We call the radial morphisms of characteristic 2, \emph{simple} morphisms.
\end{defn}

We may note that radial morphisms are in particular \'etale. As a direct consequence of Lemma \ref{lem: n-radial properties} we have the following corollary.

\begin{cor}\label{cor: radial properties}
 Assume $\text{char}(\wtilde{k})=p>0$. If $f$ is a radial morphism of open discs, then $\pf_f$ has local degrees powers of $p$. Moreover, if $p^\alpha$ and $p^\beta$ are two consecutive dominating terms, then $\alpha<\beta$.
 In particular, if $f:\td(1^-)\to \sd(1^-)$ is a radial morphisms of open unit discs, then $\pf_f\in\Lambda_p$.
\end{cor}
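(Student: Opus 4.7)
The plan is to deduce the corollary from Lemma \ref{lem: n-radial properties}(2) together with the general structure of the profile. Since $f$ is radial, by definition it is $n$-radial for some $n$ with $\b_{f,n}=0$, so $\pf_f$ is defined on the whole of $[0,r]$ and its local degrees coincide exactly with the dominating terms $i_1<\dots<i_n$ of the $(T,S)$-valuation polygon (compare Remark \ref{rmk: domin terms}(1)). By Lemma \ref{lem: n-radial properties}(2) each $i_l$ is a power of $p$, say $i_l=p^{\alpha_{l-1}}$, and strict monotonicity of the $i_l$'s translates directly into $\alpha_0<\alpha_1<\dots<\alpha_{n-1}$. This proves the first two assertions.

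For the open unit disc case I would verify the three conditions of Definition \ref{defn: lambda} one by one. Continuity and the piecewise monomial form $r\mapsto |f_{i_l}|\,r^{p^{\alpha_{l-1}}}$ both come from Remark \ref{rmk: domin terms}(1); strict increase is automatic because every exponent is a positive integer. The normalization $\pf_f(0)=0$ is built into the definition of the profile, while $\pf_f(1)=1$ follows from the fact that $f\colon\td(1^-)\to\sd(1^-)$ being a finite morphism of degree $i_n$ forces $|f_{i_n}|\cdot 1^{i_n}=1$. At this stage, conditions (1) and (2) together with the inequalities $\alpha_0<\dots<\alpha_{n-1}$ in (3) are in hand, so what remains is precisely $\alpha_0=0$.

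For this last point I would invoke the observation recorded just before the corollary, namely that radial morphisms are \'etale. \'Etaleness at $T=0$ forces $f'(0)=f_1\neq 0$, so for $\rho$ sufficiently small the term $|f_1|\,\rho$ strictly dominates every $|f_j|\,\rho^j$ with $j\geq 2$. Hence $1$ belongs to $\Dc_{f,(T,S)}$ and, being the smallest possible dominating term, must coincide with $i_1$, so that $\alpha_0=0$. I do not expect any serious obstacle in this argument; essentially all of the content is already contained in Lemma \ref{lem: n-radial properties}, and the additional ingredient for $\alpha_0=0$ is only the asymptotic behavior of the valuation polygon near the origin combined with \'etaleness.
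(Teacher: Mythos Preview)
Your argument is correct and matches the paper's approach: the paper simply declares the corollary a direct consequence of Lemma \ref{lem: n-radial properties}, and you have unpacked exactly that, using part (2) for the power-of-$p$ shape of the dominating terms and the remark that radial morphisms are \'etale (stated just before the corollary) to secure $\alpha_0=0$. The only cosmetic point is that $\pf_f(1)=1$ is already built into the definition of the profile (since $\pf_{f,(T,S)}(r)=r'$ by convention), so you need not derive it from $|f_{i_n}|=1$, though your computation is of course consistent with this.
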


The proof of the following lemma amounts to a straightforward calculation which we omit.

\begin{lemma}\label{lem: profile change}
 Let $f:D^T(1^-)\to D^S(1^-)$ be a radial morphism of open unit discs with break-points $0=b_0<b_1<\dots<b_n<1=b_{n+1}$, local coefficients $a_1<\dots<a_{n+1}$ and local degrees $1=p^{\alpha_0}<p^{\alpha_1}<\dots<p^{\alpha_n}$. Let further $a\in k^{\circ\circ}$ and $r\in |k|\cap (0,1)$. Then, the restriction of $f$ to $D:=D^T(a,r^-)$ is again radial morphism. Moreover, the profile of $f_{|D}$ with respect to any pair of compatible coordinates identifying $D$
 and its image with open unit discs, is given by
 $$
 \rho\mapsto\pf_{f_{|D}}(\rho)=\frac{1}{a_i\cdot r^{p^{\alpha_{i-1}}}}\cdot\pf_{f}(r\cdot \rho),\quad \rho\in[0,1]
 $$
 where $i=1,\dots,n+1$ is such that $b_{i-1}\leq r< b_i$.
 \end{lemma}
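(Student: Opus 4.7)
The plan is to Taylor-expand $f$ at the center $a$ of $D$, read off the $(T_a, S_{f(a)})$-expansion from Lemma \ref{lem: coordinate description}, and apply radiality of $f$ twice: once to determine the radius $r'$ of the image $f(D)$, and once to see that the profile of $f_{|D}$ is independent of $a$ and equals a rescaling of $\pf_f$.

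First I would compute $r'$. By Lemma \ref{lem: coordinate description}, the $(T_a, S_{f(a)})$-expansion of $f_{|D}$ has coefficients $f_{[j]}(a)$, so $f(D)$ is the open disc centered at $f(a)$ of radius $r' = \max_{j\geq 1} |f_{[j]}(a)|\,r^j = \pf_{f,(T_a,S_{f(a)})}(r)$. By radiality of $f$ this equals $\pf_f(r)$, and since $b_{i-1}\le r< b_i$, the explicit description of elements of $\Lambda_p$ from Lemma \ref{lem: lambda prop} (4) gives $r' = a_i\cdot r^{p^{\alpha_{i-1}}}$; in particular $r'\in |k|$.

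Next, since $r,r'\in |k|$, I would choose $c,c'\in k$ with $|c|=r$ and $|c'|=r'$ and pass to the compatible coordinates $T':=T_a/c$, $S':=S_{f(a)}/c'$ identifying $D$ and $f(D)$ with open unit discs. A direct substitution yields
$$
\pf_{f_{|D},(T',S')}(\rho) \;=\; \frac{1}{r'}\max_{j\ge 1}|f_{[j]}(a)|\,(r\rho)^j \;=\; \frac{1}{r'}\,\pf_{f,(T_a,S_{f(a)})}(r\rho) \;=\; \frac{1}{r'}\,\pf_f(r\rho)
$$
for $\rho\in[0,1)$, the last equality again by radiality of $f$. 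Because the right-hand side depends neither on the chosen $a\in D\cap k$ nor on the rescaling constants $c,c'$, varying $a$ inside $D$ (which corresponds to translating the center by an element of $k^{\circ\circ}$) does not alter the profile; this yields radiality of $f_{|D}$ together with the asserted formula.

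The only point requiring care is to distinguish between compatible coordinates that preserve the radii of the discs (which leave the profile unchanged) and those that merely identify the discs with unit discs (which introduce the multiplicative constant $1/r' = 1/(a_i r^{p^{\alpha_{i-1}}})$). Everything else is bookkeeping built on Lemma \ref{lem: coordinate description} together with the already-established radiality of $f$, so I do not expect any essential obstacle.
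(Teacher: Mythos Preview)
Your argument is correct and is precisely the straightforward calculation the paper has in mind; the paper in fact omits the proof entirely, stating only that it ``amounts to a straightforward calculation.'' Your use of Lemma~\ref{lem: coordinate description} to rewrite $f$ in $(T_a,S_{f(a)})$-coordinates, followed by the rescaling $T'=T_a/c$, $S'=S_{f(a)}/c'$ and the appeal to radiality of $f$ to make the result independent of the center, is exactly the intended computation, and your remark distinguishing radius-preserving coordinates from unit-disc-identifying coordinates pinpoints the only place one could go astray.
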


\begin{rmk}
 From the valuation polygon point of view, radial morphisms which are even simpler than the simple ones are radial of characteristic 1, \ie isomorphisms and, as we know, in general they are far from being simple.
\end{rmk}

\begin{exa}
 We provide some examples of simple morphisms (assuming $\text{char}(\wtilde{k})=p>0$).
 
 \emph{(1)} Every \'etale morphism $f:\td(r^-)\to \sd(r'^-)$ of degree $p$ is simple. To see this, let $S=\sum_{i\geq 1} f_i\cdot T^i$ be the $(T,S)$-representation of $f$. Now, $f$ being \'etale means that the derivative $\sum_{i\geq 1} i\cdot f_i\cdot T^{i-1}$ is an invertible function on $\td(r^-)$, hence for every $i>1$ we have 
 \begin{equation}\label{eq: etale}
 |f_1|\cdot r\geq|i|\cdot |f_i|\cdot r^i.
 \end{equation}
 An immediate consequence is that $\pf_{f,(T,S)}$ has only one break. Namely, if $r_0:=\big(\frac{|f_1|}{|f_p|}\big)^{\frac{1}{p-1}}$, then the interval of dominance of $1$ is $[0,r_0]$. This is because for a small $\rho>0$, $|f_1|\cdot\rho>|f_i|\cdot \rho^i$ for every $i>1$ ($p$-adic Rolle theorem). Further, for $1<i<p$ inequality \eqref{eq: etale} becomes $|f_1|\cdot r\geq |f_i|\cdot r^i$, hence for such an $i$, $|f_1|\cdot \rho> |f_i|\cdot \rho^i$, for every $\rho\in [0,r)$. Since $p=\deg(f)$, and $|f_p|\cdot r_0^p=|f_1|\cdot r_0$ and $p$ dominates over some interval $(r-\eps,r)$, it follows that $(r_0,r)$ is the interval of dominance of $p$.
  
 If we take $a\in \td(r^-)\cap k$, and take a pair of compatible coordinates $(T_a,S_{f(a)})$, we note that $|f_{[1]}(a)|=|f_1|$ and $|f_{[p]}(a)|=|f_p|$ (first equality holds because $f$ is \'etale while the second because $p=\deg(f)$). Then, the same discussion above applies to deduce that the only break-point of $\pf_{f,(T_a,S_{f(a)})}$ is $\big(\frac{|f_{[1]}(a)|}{|f_{[p]}(a)|}\big)^{\frac{1}{p-1}}=r_0$. 
 
 \emph{(2)} Let $f:\td(1^-)\to \sd(1^-)$ be given by $S=T^{p^\alpha}+f_1\cdot T$, where $1>|f_1|>|p|$, $\alpha\in \N$. Then $f$ is simple of degree $p^\alpha$. Clearly, it is an \'etale morphism and $\pf_{f,(T,S)}$ has the only break-point in $r_0:=|f_1|^{\frac{1}{p^\alpha-1}}$. Now for $a\in \td(1^-)(k)$, and $l\in\{2,\dots,p^\alpha-1\}$, $|f_{[l]}(a)|=|\binom{p^\alpha}{l}|\cdot |a|^{p^\alpha-l}<|f_{[1]}(a)|=|f_1|$ by Remark \ref{rmk: kummer}, so $[0,r_0]$ is the interval of dominance of $1$ while $[r_0,1)$ is the interval of dominance of $p^\alpha$.
 
 \emph{(3)} Generalizing the previous example, let $f:\td(r^-)\to \sd(r'^-)$ be given by $S=f_{p^\alpha}\cdot T^{p^\alpha}+f_1\cdot T$. Let $r_0:=\big(\frac{|f_1|}{|f_{p^\alpha}|}\big)^{\frac{1}{p^\alpha-1}}$. Suppose that for every $l\in\{2,\dots,p^\alpha-1\}$ we have $\big|\binom{p^\alpha}{l}\big|\cdot r^{p^\alpha-l}\leq r_0^{p^\alpha-l}$ (a condition satisfied by the morphism in the previous example). Then, $f$ is simple. To see this, we note that $r_0$ is the break-point of $\pf_{f,(T,S)}$. If $a\in \td(r^-)\cap k$ then
 $$
 |f_{[l]}(a)|\cdot r_0^l=|\binom{p^\alpha}{l}|\cdot |a|^{p^\alpha-l}\cdot |f_{p^\alpha}|\cdot r_0^r<|\binom{p^\alpha}{l}|\cdot r^{p^\alpha-l}\cdot |f_{p^\alpha}|\cdot r_0^r\leq r_0^{p^\alpha-l}\cdot |f_{p^\alpha}|\cdot r_0^l=|f_1|\cdot r_0.
 $$
 The claim follows by similar arguments as before.
\end{exa}

\subsection{$n$-radial morphisms of quasi-smooth $k$-analytic curves}
From now on we assume that $\text{char}(\wtilde{k})=p>0$.
\subsubsection{}
In the present article we will deal with quasi-smooth $k$-analytic curves in the sense of Berkovich analytic geometry over $k$. We recall that a curve $X$ is quasi-smooth if every rational point in it has a neighborhood isomorphic to an open disc. We will use freely Berkovich classification of points of $X$ into four types, according to the nature of their completed residue field. More precisely, if $x\in X$ is a point, $\kappa(x)$ its residue field and $\sH(x)$ its completion with respect to the seminorm $|\cdot|_x$ induced by $x$ (after all, the points of $X$ are seminorms on $k$-affinoid algebras corresponding to their affinoid neighborhoods and we will use ``$|\cdot|_x$'' to denote the seminorm corresponding to $x$), then, fields $\sH(x)$ satisfy the Abhyankar inequality 
\begin{equation}\label{eq: Abhy}
\rk_\Q(|\sH(x)^*|/|k^*|\otimes_\Z\Q)+\trdeg(\wtilde{\sH(x)}/\wtilde{k})\leq 1, 
\end{equation}
where $\trdeg$ stands for ``transcendental degree''. Then, we have the following classification of points in $X$ (see \cite[Section 1.4.4.]{Ber90} and \cite[Section 3.6]{BerCoh} for the details): 1)\emph{of type 1} or \emph{rational} if $\sH(x)=k$; 2) \emph{of type 2} if $\trdeg(\wtilde{\sH(x)}/\wtilde{k})=1$. In this case field $\sH(x)$ is either isomorphic to the completion of $k(T)$ with respect to some norm coming from a closed disc in $k$ of a radius $r\in |k^*|$, either it will be a finite extension of such a field; 3) \emph{of type 3} if $\rk_\Q(|\sH(x)^*|/|k^*|\otimes_\Z\Q)=1$.  If $x$ is of type 3, then it admits a neighborhood isomorphic to a closed annulus $\ta[0;r_1,r_2]$ where $x$ corresponds to a point $\eta_r^T$, $r\notin|k|$. In this case, $\sH(x)=k_r$, where $k_r$ is the field of formal power series $\sum_{i\in \Z}a_i\, T^i$, $a_i\in k$ and $\lim\limits_{|i|\to \infty}|a_i|r^i=0$. 4) \emph{of type 4} if it is not of any of the previous three types. If $x$ is of type 4, we recall that from the existence of skeleta of $X$, $x$ admits a neighborhood isomorphic to an open unit disc, say $\td$. By Berkovich classification theorem, the point $x$ corresponds to a nested sequence of closed discs in $k$, $(D_n)_n$, with empty intersection and $\sH(x)$ is the completion of the the field $k(T)$ with respect to the family of norms $(D_n)_n$. We note that points of type 4 appear only if the base field $k$ is not maximally (spherically) complete. 
%

\subsubsection{}\label{sec: triangulations}
Let $X$ be a quasi-smooth $k$-analytic curve. We recall that a locally finite subset $\cT\subset X$ consisting of type 2 and 3 points is called a \emph{triangulation} of $X$ if $X\setminus\cT$ is a disjoint union of open \emph{unit} discs and annuli (we agree to consider an open disc punctured in one rational point to be an open annulus as well). Theorem ZZZ in \lc implies that quasi-smooth $k$-analytic curves admit triangulations. Moreover, for any locally finite subset $\cT'\subset X$ of type 2 points, there exists a triangulation $\cT$ of $X$ that contains $\cT'$. If $\cT$ is a triangulation of $X$, and if $\cA$ is the set of all the connected components of $X\setminus\cT$ that are open annuli, we denote by $\Gamma_{\cT}$ the \emph{skeleton of $X$ with respect to $\cT$}, which is the closed subset $\cT\cup\cup_{A\in \cA}\cS(A)$. In general, by a \emph{skeleton} $\Gamma$ of $X$ we will mean any subset of $X$ which is of the form $\Gamma_\cT$, for some triangulation $\cT$ of $X$. We note that $X\setminus\Gamma$ is a disjoint union of open unit discs and that a skeleton  $\Gamma$ can be empty only if $X$ is an open unit disc. For a $k$-analytic curve we say that it is \emph{basic} if it admits a triangulation consisting of one point.

If $\Gamma$ is a non-empty skeleton of a curve $X$, then there is a well defined deformation retraction $r_\Gamma:X\to \Gamma$ which is identity on $\Gamma$ and for a point $x\in X\setminus\Gamma$, $r_\Gamma(x)$ is the unique point on $\Gamma$ such that if $D$ is the connected component of $X\setminus\Gamma$ that contains $x$, then $r_{\Gamma}(x)$ is contained in the closure of $D$ in $X$.

\subsubsection{}\label{sec: tangent space}
If $x\in X$ of type 2, then the field $\wtilde{\sH}(x)$ has transcendence degree 1 over $\wtilde{k}$, hence is a function field of a smooth projective $\wtilde{k}$-algebraic curve, which we will denote by $\fC_x$. If $x$ is an interior point of $X$ then there is a 1-1 correspondence between the closed points of $\fC_x$ and the set $T_xX:=\varprojlim_{x\in U}U\setminus\{x\}$, where $U$ goes through the set of open neighborhoods of $x$ in $X$, as is detailed in \cite[Section 4.2.11.1]{Duc-book}. We will call the latter set the \emph{tangent space of $x$ in $X$} and use freely its identification with $\fC_x(\wtilde{k})$. 

If $x\in X$ is not in the interior of $X$, then the tangent space $\varprojlim_{x\in U}U\setminus\{x\}$ can only be identified with closed points of a $\wtilde{k}$-affine curve $\fC_x'$ whose smooth compactification is $\fC_x$.


\begin{defn}
 Let $X$ be a quasi-smooth $k$-analytic curve and $x\in X$ of type 2. Then, we define the \emph{genus of $x$}, denoted by $g(x)$, to be the genus of the curve $\fC_x$.
\end{defn}
%

%

\subsubsection{}
We next recall the category of germs of $k$-analytic spaces, referring to \cite[Section 3.4.]{BerCoh} for the details. Let $k$-Germs be the category whose objects are paris $(X,S)$, where $X$ is a $k$-analytic space and $S$ is a subset of the underlying topological space of $X$. A morphism between two such objects $(Y,T)\to (X,S)$ is given by a morphism $f:Y\to X$ such that $f(S)\subset T$. This category admits calculus of right fractions with respect to class of morphisms $f:(Y,T)\to (X,S)$, where $f$ induces an isomorphism between $Y$ and a neighborhood of $S$ in $X$. The resulting category is denoted by $k$-\emph{Germs}, and a morphisms $f:(Y,T)\to (X,S)$ in $k$-\emph{Germs} is given by an inductive limit of morphisms $f_{U}:U\to X$ where $U$ goes through the fundamental system of neighborhoods of $T$ in $Y$ and $f_U(T)\subset S$. Any such a morphism is called a representative of $f$. For example, two germs $(Y,T)$ and $(X,S)$ are isomorphic if there exists an open neighborhood $U$ of $T$ in $Y$ and $V$ of $S$ in $X$ and an isomorphism $f:U\to V$ inducing a bijection between $T$ and $S$. If $S$ consists of one point $\{x\}$ we write $(X,x)$ instead of $(X,\{x\})$.

\subsubsection{} Let $f:Y\to X$ be a finite \'etale morphism of quasi-smooth $k$-analytic curves. Then, it follows from simultaneous semistable reduction theorem that there exists a skeleton $\Gamma=(\Gamma_Y,\Gamma_X)$ of the morphisms where $\Gamma_Y$ and $\Gamma_X$ are skeleta of $Y$ and $X$, respectively, and $f^{-1}(\Gamma_X)=\Gamma_Y$. Following \cite{TeMu} we say that $\Gamma$ is \emph{radializing for $f$}, or that $f$ \emph{is radial with respect to $\Gamma$} if for every type 2 point $y\in \Gamma_Y$, every open disc $D$ in $Y\setminus \Gamma_Y$ which is attached to $y$, and any compatible coordinates $T$ and $S$ on $D$ and $f(D)$, respectively, which identify them with unit discs, the profile $\pf_{f_{|D},(T,S)}=:\pf_{f,y}$ only depends on $y$. As we have seen in Corollary \ref{cor: radial properties}, if $\text{char}(\wtilde{k})=p>0$, then $\pf_{f,y}\in\Lambda_p$. If we denote by $\Gamma_Y^{(2)}$ the set of type two points on $\Gamma_Y$, then we have a function $\pf_{f,\cdot}:\Gamma_Y^{(2)}\to \Lambda_p$ (or $\pf_{f,(\cdot)}(\cdot):\Gamma_Y^{(2)}\times [0,1]\to [0,1]$), $y\mapsto \pf_{f,y}$ (resp. $(y,r)\mapsto \pf_{f,y}(r)$). We sum up its main properties and the results of \cite{TeMu} in Theorem \ref{thm: Temkin's thm}.

Recall that a continuous function $f$ on a real interval (open or closed) $I$ is called monomial if there exists $a\in \R$ and $d\in\Z$ such that $f$ is of the form $r\mapsto a\,r^d$, for $r\in I$. We say that it is piece-wise monomial if we can cover $I$ by real intervals $(I_n)_n$ such that $f$ is monomial on every $I_n$. A break-point of $f$ is a point in the interior of $I$ where it is not smooth. It is not difficult to see that if $f$ has finitely many break-points then we can cover $I$ by finitely many intervals over which $f$ is monomial.

\begin{thm}[M. Temkin]\label{thm: Temkin's thm} Let $f:Y\to X$ be a finite \'etale morphism of quasi-smooth $k$-analytic curves.
 \begin{enumerate}
  \item Let $\Gamma'=(\Gamma_Y',\Gamma_X')$ be a skeleton of $f$. Then, there exists a radializing skeleton $\Gamma=(\Gamma_Y,\Gamma_X)$ of $f$ such that $\Gamma_Y'\subset \Gamma_Y$ and $\Gamma_X'\subset \Gamma_X$.
  \item The function $\pf_{f,\cdot}$ extends to a function (denoted in the same way) $\pf_{f,\cdot}:Y^{\text{hyp}}\to \Lambda_p$, where $Y^{\text{hyp}}$ is the set of type \emph{(2)}, \emph{(3)} and \emph{(4)} points, which varies continuously along skeleta of $Y$. More precisely, if $(y_n)_n$ is a sequence of points in $Y^{\text{hyp}}$ converging to some $y\in Y^{\text{hyp}}$, then $\pf_{f,y_n}$ converges uniformly to $\pf_{f,y}$.
  \item The function $\pf_{f,\cdot}$ is piece-wise $|k|$-monomial in the following sense: For every open annulus $A\iso \ta(0,r_1,1)$ in $Y$ which is not precompact, any $a\in [0,1]$, the function $r\mapsto \pf_{f,\eta^T_{0,r}}(a)$, where $r\in (r_1,1)$ is piece-wise monomial with finitely many break-points.
  \item Let $y\in Y^{\text{hyp}}$ and $x=f(y)$. The field $\sH(x)$ is almost monogeneous (see Section \ref{sec: almost monogeneous}) and $\H_{\sH(y)/\sH(x)}=\pf_{f,y}$.
 \end{enumerate}
\end{thm}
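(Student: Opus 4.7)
The plan is to treat the four parts sequentially, building on the local theory of $n$-radial morphisms developed in Section 2 and on a descent/extension-of-scalars argument for non-rigid points.

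For part \emph{(1)}, I would start from the simultaneous semistable skeleton $\Gamma' = (\Gamma_Y',\Gamma_X')$ produced by Coleman's theorem and show that, by adding finitely many type 2 points, one can force the profile of every restriction $f_{|D}$ (with $D$ a component of $Y\setminus\Gamma_Y$) to depend only on the endpoint of $D$ on $\Gamma_Y$. The key observation is that if $D\iso\td(1^-)$ and $f_{|D}\colon D\to D'$ is finite étale, then by Lemma \ref{lem: n-radial properties} the dominating terms and local coefficients of $\pf_{f_{|D},(T,S)}$ are controlled by the invertible coordinate functions $f_{[i]}(T)$; the behavior of these functions as one moves the center of $D$ inside a larger disc is governed by Lemma \ref{lem: coordinate description}. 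By cutting out finitely many smaller discs where any $f_{[i]}$ has a zero (or where two norms $|f_{[i]}(T)|\cdot\rho^i$ and $|f_{[j]}(T)|\cdot\rho^j$ cross as a function of the center), one obtains a refinement on which $f$ is radial over each resulting component. Iterating the refinement over a finite open cover produces the radializing skeleton.

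For part \emph{(2)}, I would define $\pf_{f,y}$ at a type 3 point by the base-change trick: after a suitable (complete, algebraically closed) extension $k'/k$, the type 3 point $y$ becomes the Gauss point of a closed annulus, which admits type 2 points arbitrarily close to it on the skeleton, and the profile is forced to be the common limit of the profiles at those type 2 points because of part \emph{(1)}. For a type 4 point one represents it as the intersection of a nested sequence of closed discs $(D_n)$ and takes the uniform limit of the profiles computed at the corresponding Gauss points; the existence of the limit follows because on any open annulus these profiles are piece-wise $|k|$-monomial with boundedly many slopes (this is where part \emph{(3)} is used). Continuity along skeleta is then a consequence of the uniform convergence of coefficients of the coordinate representations of $f$ when passing from one coordinate chart to a nearby one.

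For part \emph{(3)}, it is enough to parametrize the skeleton of an annulus $A\iso\ta(0;r_1,1)$ by $r\mapsto \eta^T_{0,r}$, write $f$ in coordinates on $A$ and on $f(A)$, and observe that for fixed $a\in[0,1]$ the value $\pf_{f,\eta^T_{0,r}}(a)$ is obtained as $\max_i |f_i^{(r)}|\cdot a^i$ for certain coefficients $f_i^{(r)}$ that are themselves monomial in $r$ with $|k|$-coefficients; the max of finitely many such monomials is piece-wise monomial with finitely many breaks, and finiteness of the set of relevant indices follows from the étaleness of $f$ together with Lemma \ref{lem: n-radial properties}(2). The main obstacle here is bounding the number of relevant $i$'s globally on $A$, which one handles by first restricting to a subannulus where $f$ is uniformly $n$-radial.

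For part \emph{(4)}, the almost monogeneous statement is proved locally: near a type 2 point $x$ one writes $\sH(x)$ as a completion of $k(T)$ and shows by direct construction, using the description of the curve $\fC_x$, that for every $r<1$ one can find $a_r\in \sH(x)^\circ$ and $x_r\in \sH(y)^\circ$ with $a_r\sH(y)^\circ\subseteq \sH(x)^\circ[x_r]$ (the analogous treatment for types 3 and 4 follows by the same base-change device as in \emph{(2)}). The hardest and most conceptually substantial step is the identification $\H_{\sH(y)/\sH(x)}\equiv \pf_{f,y}$. The idea is to pick a uniformizer / generator $x_r\in\sH(y)^\circ$ of an almost monogeneous presentation and compute both sides in terms of its minimal polynomial over $\sH(x)$: the Herbrand function is read off from the norms $|\sigma(x_r)-x_r|$ as $\sigma$ runs over $\Gal$, whereas the profile is read off from the valuation polygon of the coordinate expression of $f$. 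Choosing coordinates so that $T$ specializes to $x_r$ translates Galois orbits of $x_r$ into roots of the defining equation of $f$, and the supremum $\sup_{t\in L^\circ}|\sigma(t)-t|$ collapses to $|\sigma(x_r)-x_r|$ modulo a controlled error that vanishes as $r\to 1$. This matches the two descriptions termwise and yields the claimed equality.
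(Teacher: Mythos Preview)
The paper does not prove this theorem at all: it is stated explicitly as a result of M.~Temkin, and the paper's ``proof'' consists solely of citations to \cite{TeMu} (Theorems~3.4.11 and~3.5.2 for parts \emph{(1)}--\emph{(3)}, Example~4.3.5 and Theorem~4.5.2 for part \emph{(4)}) together with a pointer to \cite{BojPoi} for an alternative viewpoint on types~3 and~4. So there is no proof in the paper to compare your proposal against; the theorem functions as a black box imported from the literature.

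Your proposal instead attempts an independent sketch of Temkin's arguments. The broad strategy you outline --- refining a skeleton by locating zeros of the coordinate functions $f_{[i]}$, extending to non-type-2 points by scalar extension or limits along nested discs, reading piece-wise monomiality off a coordinate expression on an annulus, and matching the Herbrand function to the profile via an almost-monogeneous generator --- is in the right spirit and does resemble the architecture of \cite{TeMu}. Two cautions, however. First, in part~\emph{(1)} you invoke Lemma~\ref{lem: n-radial properties} to control the dominating terms, but that lemma \emph{assumes} $n$-radiality; you need instead the raw information from Lemma~\ref{lem: coordinate description} and a direct analysis of where valuation polygons can change, and turning this into a finite refinement is more delicate than ``cut out where some $f_{[i]}$ vanishes'' suggests (one must also control how the profile depends on the attachment point, not just on the coordinate chosen). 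Second, in part~\emph{(4)} the reduction of $\sup_{t\in L^\circ}|\sigma(t)-t|$ to $|\sigma(x_r)-x_r|$ ``modulo a controlled error that vanishes as $r\to 1$'' is exactly the substantive content of the almost-monogeneous theory, and your sketch does not indicate how that control is obtained; in \cite{TeMu} this is a genuine piece of work. As presented, your outline is a plausible roadmap but not yet a proof.
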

\begin{proof}
 For \emph{(1), (2)} and \emph{(3)} see \cite[Section 3.]{TeMu} and in particular Theorems 3.4.11. and 3.5.2. Part \emph{(4)} follows from Example 4.3.5. and Theorem 4.5.2. of \lc.  For another perspective on definition of $\pf_f$ to points of type \emph{(3)} and \emph{(4)} see also \cite{BojPoi}.
\end{proof}
The following lemma will be useful later on.
\begin{lemma}\label{lem: skeleta 2 of 3}
  Let $f:Y\to X$ be a finite morphism of quasi-smooth $k$-analytic curves with skeleton $\Gamma=(\Gamma_Y,\Gamma_X)$. Suppose there exists a quasi-smooth $k$-analytic curve $Z$ and finite morphisms $h:Y\to Z$ and $g:Z\to X$ such that $f=g\circ h$. Then, $\Gamma_Z:=g^{-1}(\Gamma_X)=h(\Gamma_Y)$ is a skeleton of $Z$.
 \end{lemma}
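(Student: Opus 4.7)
The plan is to prove this in two steps: first the set-theoretic identity $h(\Gamma_Y)=g^{-1}(\Gamma_X)$, and then that this common set is a skeleton of $Z$.

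For the identity, the inclusion $h(\Gamma_Y)\subseteq g^{-1}(\Gamma_X)$ is immediate since $g\circ h=f$ and $f(\Gamma_Y)\subseteq\Gamma_X$. For the reverse inclusion, we may assume $h$ is surjective, working component by component in $Z$ if necessary: finite morphisms of quasi-smooth curves are flat (the target is regular of dimension $1$), hence open, so $h(Y)$ is open and closed in each connected component of $Z$, and therefore either empty or all of it. Then for $z\in g^{-1}(\Gamma_X)$, picking any $y\in h^{-1}(z)$ gives $f(y)=g(z)\in\Gamma_X$, whence $y\in f^{-1}(\Gamma_X)=\Gamma_Y$ and $z=h(y)\in h(\Gamma_Y)$.

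For the skeleton property, set $\Gamma_Z:=g^{-1}(\Gamma_X)$. Writing $X\setminus\Gamma_X=\bigsqcup_i D_i$ as a disjoint union of open discs gives $Z\setminus\Gamma_Z=\bigsqcup_i g^{-1}(D_i)$; it thus suffices to show that each connected component $E$ of $g^{-1}(D_i)$ is an open disc. Using the factorization, $h^{-1}(E)\subseteq h^{-1}(g^{-1}(D_i))=f^{-1}(D_i)$, and by hypothesis the latter is a disjoint union of open discs, so $h^{-1}(E)$ is itself a disjoint union of open discs (namely those components of $f^{-1}(D_i)$ that $h$ sends into $E$). Openness of $h$ combined with connectedness of $E$ forces $h(h^{-1}(E))=E$, so $h$ realizes $E$ as the image of a finite surjective morphism from a disjoint union of open discs.

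The principal obstacle is the resulting key claim: a connected quasi-smooth $k$-analytic curve $E$ receiving a finite surjective morphism from a disjoint union of open discs is itself an open disc. Two ingredients go into this. First, every type $2$ point $\xi\in E$ has genus $0$: choosing a preimage $\xi'$ in some open-disc component $F$ of $h^{-1}(E)$ one has $\wtilde{\sH(\xi')}\cong\wtilde{k}(T)$, and by L\"uroth's theorem the finite subextension $\wtilde{\sH(\xi)}\subseteq \wtilde{\sH(\xi')}$ is of the form $\wtilde{k}(u)$, so $\fC_\xi\cong\P^1_{\wtilde{k}}$ and $g(\xi)=0$. Second, no $\xi\in E$ has two distinct tangent directions leading to components with non-trivial skeleton: two such directions at $\xi$ would, via the surjective reduction $\tilde h\colon\fC_{\xi'}\to\fC_\xi$, lift to two distinct tangent directions at $\xi'$ in $F$, each leading to a component of $F\setminus\{\xi'\}$ that maps finitely surjectively to a sub-annulus of $E$ and hence cannot itself be a residue disc; but a type $2$ point of an open disc has exactly one such ``outward'' direction, a contradiction. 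These two facts, together with the openness of $E$ in $Z$, force $E$ to admit the empty triangulation (cf.\ the classification of basic quasi-smooth curves in \cite{Duc-book}), i.e.\ $E$ is an open disc, completing the proof.
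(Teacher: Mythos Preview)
Your first step, the set-theoretic identity $h(\Gamma_Y)=g^{-1}(\Gamma_X)$, is correct and fills in something the paper leaves implicit. Ingredient~1 of your second step (every type~2 point of $E$ has genus~$0$, via L\"uroth on the residue extension $\wtilde{\sH(\xi)}\subset\wtilde{\sH(\xi')}$) is also correct and is exactly the algebraic core of the paper's argument.

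The gap is in ingredient~2. You assert that if $\xi\in E$ has two tangent directions toward non-disc components, then each lifted direction at $\xi'\in F$ leads to a component of $F\setminus\{\xi'\}$ that ``maps finitely surjectively to a sub-annulus of $E$ and hence cannot itself be a residue disc''. But only the \emph{germ} of that component near $\xi'$ maps to a small annulus; the full component $C'$ of $F\setminus\{\xi'\}$ may very well be a residue disc, with $h_{|C'}$ a finite surjection onto the non-disc component $C_i\subset E\setminus\{\xi\}$. To rule this out you need to know that an open disc cannot finitely surject onto a non-disc --- and that is precisely the claim under proof. Your argument uses only the morphism $h$ from above, and this one-sided information is not enough to break the circularity. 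The concluding appeal to the classification in \cite{Duc-book} then rests on an unproven premise.

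The paper avoids this by using the full sandwich $D\to U\to D'$: it exhausts a single disc $D\subset Y$ by closed subdiscs $D_n$, so that the reduction of $f_{|D_n}$ is a finite morphism $\A^1_{\wtilde k}\to\A^1_{\wtilde k}$ factoring through the reduction $\wtilde{U_n}$ of $U_n:=h(D_n)$. Passing to smooth compactifications, $\wtilde{U_n}'$ is squeezed between two copies of $\P^1_{\wtilde k}$, hence equals $\P^1_{\wtilde k}$ by L\"uroth; this forces $U_n$ to be a closed disc and $U=\bigcup_n U_n$ an open disc. The point is that L\"uroth is applied once, at the level of reductions, to the intermediate curve itself --- not to transport a structural property along $h$ alone.
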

\begin{proof}
This is standard so we only sketch a proof. Since all the morphisms involved are finite, the ``branching'' points of $\Gamma_Z$ will be locally finite and it is enough to prove that each connected component of $Z\setminus \Gamma_Z$ is an open (unit) disc. For each such a component $U$ there is an open unit disc $D$ in $Y$ attached to $\Gamma_Y$ and an open unit disc $D'$ in $X$ attached to $\Gamma_X$ such that $f_{|D}:D\to D'$ is finite and factorizes through $h_{|D}:D\to U$ and $g_{|U}:U\to D'$.

Let $\eta_n$ be a sequence of type \emph{(2)} points in $D$ that converge to $\Gamma_X$ and let $D_n$ be unique closed disc in $D$ with Gauss point $\eta_n$. Then, the restriction of $f$ to $D_n$ is again a finite morphism of closed discs and consequently the image $U_n$ of $D_n$ by $h$ is $k$-affinoid domain in $U$ with good reduction (its Shilov point is $h(\eta_n)$). Passing to the canonical reduction, we obtain that $\wtilde{f_{|D_n}}:\A^1_{\wtilde{k}}\to\A^1_{\wtilde{k}}$ factors through a smooth $\wtilde{k}$-algebraic curve $\wtilde{U_n}$. This factorization extends to the extension of morphisms of smooth compactifications of our curves, that is, we have induced factorization $\P^1_{\wtilde{k}}\to \wtilde{U_n}'\to \P^1_{\wtilde{k}}$, where $\wtilde{U_n}'$ is a smooth compactification of $\wtilde{U_n}$. It follows that $\wtilde{U_n}'=\P^1_{\wtilde{k}}$ and coming back to analytic setting, it follows that $U_n$ is a $k$-affinoid domain with good reduction in $\P^1_k$. But then it is easy to see that it is isomorphic to a closed disc. Hence $U$ itself is contained in $\P^1_k$ (being an increasing union of closed discs) and since it is an image of an open disc by a finite map, it is necessarily an open disc. 
\end{proof}

\subsubsection{}
Let $f:Y\to X$ be a finite morphism of quasi-smooth $k$-analytic curves and let $y\in Y$ be of type 2, and $x=f(x)$. Then, morphism $f$ induces a finite morphism $\wtilde{f}_y: \fC_y\to \fC_x$. We say that $f$ is \emph{residually separable} (resp. \emph{residually purely inseparable}, resp. \emph{inseparable}) at $y$ if $\wtilde{f}_y$ is so. These notions extend to the points of type 3 and 4 as well, by a suitable scalar extension as is done in \cite[Definitions 1.17. and 1.18.]{BojPoi}. We denote the separability degree (resp. purely inseparable degree) of the induced field extension $\wtilde{k}(\fC_y)/\wtilde{k}(\fC_x)$ by $\fs_{f,y}$ (resp. $\fri_{f,y}$). We have (for $y$ and $x$ of type 2)
\begin{equation}\label{eq: sep insep}
|\sH(y):\sH(x)|=|\wtilde{\sH(y)}:\wtilde{\sH(x)}|=\fs_{f,y}\cdot\fri_{f,y}.
\end{equation}

\begin{lemma}\label{lem: sep insep}
Let $f:Y\to X$ be a finite morphism of quasi-smooth $k$-analytic curves, $y\in Y^{hyp}$ and $x=f(y)$. Then, if $\sH(y)/\sH(x)$ is simply ramified (resp. has trivial Herbrand function), then $f$ is residually purely inseparable (resp. is residually separable) at $y$. 

 A finite nontrivial extension $\sH(y)/\sH(x)$ is residually purely inseparable (resp. residually separable) if and only if the degree of $\H_{\sH(y)/\sH(x)}$ (degree of $\pf_{f,y}$) is equal to $\deg(f,y)=|\sH(y):\sH(x)|$ (resp. if $\H_{\sH(y)/\sH(x)}$ is trivial).
\end{lemma}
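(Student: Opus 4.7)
The plan is to reduce both halves of the lemma to the single numerical identity
\[
 \deg(\pf_{f,y})\;=\;\fri_{f,y},
\]
from which the two ``if and only if'' assertions drop out formally, and the two one-way implications (simply ramified $\Rightarrow$ residually purely inseparable; trivial Herbrand function $\Rightarrow$ residually separable) are then immediate special cases. By Theorem~\ref{thm: Temkin's thm}\emph{(4)} this is the same as $\deg(\H_{\sH(y)/\sH(x)})=\fri_{f,y}$, and by Lemma~\ref{lem: H in lambda} the left-hand side makes sense as the top local exponent $p^{\alpha_n}$ of an element of $\Lambda_p$ in the sense of Definition~\ref{defn: lambda}.

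To compute that exponent I would pass to the Galois closure. Put $K=\sH(x)$, $L=\sH(y)$, let $M/K$ be Galois containing $L$, and set $G=\Gal(M/K)$, $H=\Gal(M/L)$, $I_G=\{\sigma\in G\mid i_G(\sigma)<1\}$ and $I_H=H\cap I_G$. Inspecting
\[
 \H_{M/K}(r)=\prod_{\sigma\in G}\max(r,i_G(\sigma))
\]
for $r$ close to $1$ gives $\deg(\H_{M/K})=|I_G|$, and the same argument inside $H$ gives $\deg(\H_{M/L})=|I_H|$. Since $\H_{L/K}=\H_{M/K}\circ\H_{M/L}^{-1}$ and both factors are piecewise $p$-monomial, the chain rule at $r=1$ yields $\deg(\H_{L/K})=|I_G|/|I_H|$. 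For Galois extensions of henselian valued fields the quotient $G/I_G$ is canonically isomorphic to $\Gal(\wtilde{M}^{\mathrm{sep}}/\wtilde{K})$, via the maximal unramified subextension $M^{I_G}/K$, so $|G/I_G|=\fs_{M/K}$ and likewise $|H/I_H|=\fs_{M/L}$. Multiplicativity of the separable residue degree in the tower $M/L/K$ then gives
\[
 \fs_{L/K}=\fs_{M/K}/\fs_{M/L}=[L:K]\cdot|I_H|/|I_G|,
\]
hence $|I_G|/|I_H|=[L:K]/\fs_{L/K}=\fri_{L/K}$ by~\eqref{eq: sep insep}. For $y,x$ of type 2 this is exactly \eqref{eq: sep insep}; for types 3 and 4 one extends scalars to a complete $k'/k$ for which $y$ lifts to a type 2 point, observing that the completed residue fields $\sH(y),\sH(x)$ (and hence $\H_{\sH(y)/\sH(x)}$) do not change, while $\fs_{f,y},\fri_{f,y}$ are defined precisely by such a scalar extension.

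With the identity $\deg(\pf_{f,y})=\fri_{f,y}$ in hand the rest is formal. The morphism $f$ is residually purely inseparable at $y$ iff $\fs_{f,y}=1$ iff $\fri_{f,y}=[\sH(y):\sH(x)]$ iff $\deg(\pf_{f,y})=[\sH(y):\sH(x)]$; and $f$ is residually separable at $y$ iff $\fri_{f,y}=1$ iff $\deg(\pf_{f,y})=1$, which by the strict monotonicity $\alpha_0=0<\alpha_1<\dots<\alpha_n$ of the exponents in Definition~\ref{defn: lambda} forces $n=0$, i.e.\ $\pf_{f,y}$ is the identity. These are the two ``iff''s. The first implication of the lemma is now immediate: a simply ramified extension satisfies $\deg(\H_{\sH(y)/\sH(x)})=[\sH(y):\sH(x)]$ by definition, whence residually purely inseparable; the second is equally immediate because a trivial Herbrand function \emph{is} the identity, whence residually separable.

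I expect the main obstacle to be the identification $|G/I_G|=\fs_{M/K}$ in the middle step. In the classical discretely valued, perfect residue-field situation this is the standard decomposition of a local Galois extension into its maximal unramified subextension and its inertia, but for almost monogeneous fields the residue extension may be imperfect, so one has to invoke the henselian version: the maximal residually separable, value-group-preserving subextension of $M/K$ exists, has Galois group $G/I_G$, and its residue field is the separable closure of $\wtilde{K}$ in $\wtilde{M}$. A secondary technical point is the behaviour of $\fs$ and $\fri$ under scalar extension for type 3 and 4 points; this is essentially cosmetic, since every quantity appearing on both sides of the identity is manifestly preserved under scalar extension of the base field.
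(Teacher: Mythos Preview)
Your argument is correct and establishes the sharper identity $\deg(\pf_{f,y})=\fri_{f,y}$, from which the lemma follows as you say. The route, however, is genuinely different from the paper's. The paper argues geometrically: it reduces to $y$ of type~2, observes that the inseparable degree of the residual morphism $\wtilde{f}_y:\fC_y\to\fC_x$ equals the multiplicity of $\wtilde{f}_y$ at a generic closed point, and then invokes \cite[Th\'eor\`eme~4.3.13]{Duc-book} to identify that multiplicity with $\deg(f_{|D})$ for a generic disc $D$ attached to $y$, which is exactly the degree of $\pf_{f,y}$. Your argument bypasses the residual curve entirely and works inside the Galois closure $M/K$, identifying $\deg\H_{M/K}=|I_G|$ and $|G/I_G|=\fs_{M/K}$ via the classical inertia quotient, then dividing.

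Each approach has a cost. The paper's proof is very short but imports a structural result from Ducros' book. Yours is more self-contained valuation theory, but the step $|G/I_G|=\fs_{M/K}$ does need the henselian fact that $G/I_G\simeq\Aut(\wtilde{M}/\wtilde{K})$ and that $\wtilde{M}/\wtilde{K}$ is normal of separable degree $\fs_{M/K}$; you flag this honestly. One small inaccuracy: under scalar extension to $k'$ the fields $\sH(y)$, $\sH(x)$ \emph{do} change, so your parenthetical ``do not change'' is not literally right. What you need (and what is true, via Temkin's theory) is that the profile $\pf_{f,y}$ and the invariants $\fs_{f,y},\fri_{f,y}$ are compatible with the scalar extension used to define them at type~3 and~4 points; it would be cleaner to phrase it that way.
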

\begin{proof}
  We may assume that $y$ (hence $x$) are of type 2. The inseparability degree of $\wtilde{f}_y:\fC_y\to \fC_x$ is equal to the multiplicity of $\wtilde{f}_y$ at all but at most finitely many points in $\fC_y(\wtilde{k})$. Then, (by \cite[Th\'eor\`eme 4.3.13]{Duc-book}) this multiplicity is the degree of $f_{|D}$ for all but at most finitely many open discs $D$ in $Y$ attached to $y$ and this is the degree of $\pf_{f,y}=\H_{\sH(y)/\sH(x)}$. The claim follows. 
 \end{proof}
\begin{lemma}\label{lem: res sep at border}
 Let $f:D_1\to D_2$ be a finite, weakly $2$-radial morphism of open unit discs, let $p^\alpha,p^\beta\in \Dc_{f,2}$, $p^\beta<p^\alpha$ and let $b$ be the border of 1-radiality of $f$. Then, for every $a\in D_1(k)$ we have $\fs_{f,\eta_{a,b}}=p^{\alpha-\beta}$.
\end{lemma}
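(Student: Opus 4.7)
I would begin by translating to the case $a=0$: the substitution $T\mapsto T-a$ preserves the weakly $2$-radial structure of $f$ and identifies $\eta_{a,b}$ with $\eta_{0,b}$. Since $b=(|f_{p^\beta}|/|f_{p^\alpha}|)^{1/(p^\alpha-p^\beta)}\in|k^*|$ (the value group being divisible), pick $c,c'\in k^*$ with $|c|=b$ and $|c'|=b':=|f_{p^\alpha}|b^{p^\alpha}=|f_{p^\beta}|b^{p^\beta}$. Working in the coordinates $T/c$ and $S/c'$, the canonical reductions of the closed discs $\bar D(0,b)$ and $\bar D(0,b')$ identify with $\A^1_{\wtilde k}$ (with generic points $\eta_{0,b}$ and $\eta_{0,b'}$), and $f$ induces the polynomial map $\xi\mapsto \tilde P(\xi):=\sum_{j\in J}\tilde u_j\xi^j$, where $J:=\{j\in\N : |f_j|b^j=b'\}$ and $u_j:=f_jc^j/c'\in k^\circ$. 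A direct check using that $p^\beta$ is the second-from-top dominating term of $f$ shows $\min J=p^\beta$, while $\max J=p^\alpha$ is the degree. Hence the residue extension at $\eta_{0,b}$ is $\wtilde k(\xi)/\wtilde k(\tilde P(\xi))$, of generic degree $p^\alpha$.

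To extract the separable part, write $\tilde P(X)=\tilde Q(X^{p^e})$ with $e$ maximal, equivalently $p^e$ the largest $p$-power dividing every $j\in J$. The extension then factors as $\wtilde k(\xi)/\wtilde k(\xi^{p^e})$, purely inseparable of degree $p^e$, followed by $\wtilde k(\xi^{p^e})/\wtilde k(\tilde Q(\xi^{p^e}))$, separable of degree $p^\alpha/p^e$ (the polynomial $\tilde Q(X)-\tilde Q(\xi^{p^e})$ has nonzero derivative by maximality of $e$, and is irreducible as it is linear in the transcendental $\tilde Q(\xi^{p^e})$). Thus $\fs_{f,\eta_{0,b}}=p^\alpha/p^e$, and the lemma reduces to the equality $e=\beta$. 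The inequality $e\leq\beta$ is immediate from $p^\beta\in J$, so the heart of the proof is to show that $p^\beta$ divides every element of $J$.

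For this, I restrict $f$ to the open disc $D(0,b^-)$ and rescale both discs to unit discs via $T\mapsto T/c$ and $S\mapsto S/c'$, obtaining a finite morphism $\tilde g\colon D(0,1^-)\to D(0,1^-)$ given by $\sum_i u_iT^i$ with $|u_i|\leq 1$ and $|u_i|=1$ iff $i\in J$. The Taylor-expansion formula $u_{[i]}(a)=\sum_{l\geq 0}\binom{i+l}{l}a^lu_{i+l}$ of Lemma~\ref{lem: coordinate description} yields $|u_{[p^\beta]}(a)|=1$ for every $a\in D(0,1^-)(k)$ (only the $l=0$ term has norm $1$; every $l\geq 1$ term is bounded by $|a|^l<1$), together with $|u_{[i]}(a)|<1$ for $i<p^\beta$ and $|u_{[i]}(a)|\leq 1$ for $i>p^\beta$; this shows that $\tilde g$ is $1$-radial with $\Dc_{\tilde g,1}=\{p^\beta\}$. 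Lemma~\ref{lem: n-radial properties}\,(3)(a) applied to $\tilde g$ with $i=p^\beta$ and $r=1$ then gives that every $j\neq p^\beta$ with $|u_j|\geq 1$, i.e.\ every $j\in J\setminus\{p^\beta\}$, satisfies $p^\beta\mid j$, whence $e\geq\beta$ and $\fs_{f,\eta_{0,b}}=p^{\alpha-\beta}$.

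The main technical obstacle will be upgrading the pointwise estimates $|u_{[i]}(a)|<1$ for $i<p^\beta$ to a bound uniform in $a\in D(0,1^-)(k)$, so that the $(T_a,S_{\tilde g(a)})$-profile equals $\rho^{p^\beta}$ on a common interval $(s,1)$ independent of $a$: as $|a|\to 1$ these norms might a priori approach $1$. Controlling this requires Kummer's theorem on binomial valuations (Remark~\ref{rmk: kummer}): when $i\not\equiv 0\pmod{p^\beta}$, adding $i$ and $l$ in base $p$ necessarily produces carries whenever $i+l\in J$, forcing $|\binom{i+l}{l}|$ to be a non-trivial power of $|p|$ and keeping the corresponding contributions to $|u_{[i]}(a)|$ uniformly bounded away from $1$. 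This combinatorial estimate is the delicate part of the argument.
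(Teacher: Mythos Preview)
Your route is genuinely different from the paper's, and considerably longer. The paper argues in two lines: because $b$ is the break between the slopes $p^\alpha$ and $p^\beta$ in every coordinate $(T_{a'},S_{f(a')})$, each open disc $D$ attached to $\eta_{a,b}$ has $\deg(f_{|D})=p^\beta$; hence $\fri_{f,\eta_{a,b}}=p^\beta$ (this being the generic multiplicity of the residual map), while counting zeros in $\bar D(a,b)$ gives $\deg(f,\eta_{a,b})=p^\alpha$, so $\fs=p^{\alpha-\beta}$ by \eqref{eq: sep insep}. You instead compute the residual polynomial $\tilde P$ and aim to factor it as $\tilde Q(\xi^{p^\beta})$ with $\tilde Q$ separable, which forces you to prove $p^\beta\mid j$ for every $j\in J$.

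Your plan for this divisibility has a gap. Applying Lemma~\ref{lem: n-radial properties}(3)(a) to the rescaled map $\tilde g$ requires $\tilde g$ to be $1$-radial, i.e.\ a \emph{uniform} bound $\sup_a|u_{[i]}(a)|<1$ for each $i<p^\beta$. Your proposed Kummer argument (``adding $i$ and $l$ in base $p$ necessarily produces carries whenever $i+l\in J$'') is circular: those carries are forced only once one already knows $p^\beta\mid(i+l)$, which is the very conclusion you are after. For instance, if $p^\beta+1$ happened to lie in $J$, then with $i=1$, $l=p^\beta$ there is no carry, $|\binom{p^\beta+1}{1}|=1$, and the corresponding term contributes $|a|^{p^\beta}$ to $|u_{[1]}(a)|$, so the supremum is $1$ and uniformity fails.

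The fix is to invoke weak $2$-radiality of $f$ on the \emph{original} disc, not merely inside $\tilde g$. For every $r<p^\beta$ and every $a'\in D_1(k)$, the fact that the second slope at $a'$ equals $p^\beta$ forces $r\notin J_{a'}$, i.e.\ $|f_{[r]}(a')|\,b^r<b'$ strictly. Since the value at the Gauss point $\eta_{0,b}$ agrees with the generic pointwise value, this strict inequality persists: $|f_{[r]}(\eta_{0,b})|\,b^r<b'$. Translating back, $\sup_{a}|u_{[r]}(a)|=|f_{[r]}(\eta_{0,b})|\,b^r/b'<1$, so $\tilde g$ is genuinely $1$-radial and your argument then completes. (The same weak-$2$-radiality input, incidentally, is what justifies your formula $b=(|f_{p^\beta}|/|f_{p^\alpha}|)^{1/(p^\alpha-p^\beta)}$: it forces $f_{[p^\beta]}$ to be zero-free on $D_1$, hence of constant norm.) Note, however, that once you are willing to use this input, the paper's argument extracts $\fri=p^\beta$ directly from the equality $\min J_{a'}=p^\beta$ for all $a'$, without ever needing the divisibility statement.
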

\begin{proof}
 Similarly as in the proof of the previous lemma, for all discs $D$ in $D_1$ that are attached to the point $\eta_{a,b}$ we have that $f_{|D}$ is of degree $p^\beta$. Indeed, because of our choice of $\eta_{a,b}$, $p^\beta$ is the highest slope of the valuation polygon of $f_{|D}$ (expressed in some compatible coordinates on $D$ and $f(D)$). Consequently, $p^\beta=\fri_{f,\eta_{a,b}}$. On the other side we have $\deg(f_{|D})=p^\alpha$, which one can check by counting the number of zeros of $f$ that are in $D$ (this time by choosing some compatible coordinates on $D_1$ and $D_2$). The result follows from \eqref{eq: sep insep}. 
\end{proof}
\begin{rmk}\label{rmk: insep degree}
 We will often use the following observations.
 
 \emph{(1)} Keeping the notation as before, suppose that $f$ is residually purely inseparable at $y$. Then, for every $\vt\in TyY$ and small enough annulus $A$ which is in $\vt$ and such that $f_{|A}$ is a finite morphism of open annuli, the degree of $f_{|A}$ is precisely $\deg(f,y)=\fri_{f,y}$. Indeed, this degree is equal to the multiplicity of the corresponding point in $\fC_y(\wtilde{k})$ but $\wtilde{f}_y$ being purely inseparable the multiplicity is the same for all the points in $\fC_y(\wtilde{k})$ and is equal to $\deg(\wtilde{f}_y)$. By Lemma \ref{lem: sep insep} this degree is also equal to degree of $\pf_{f,y}$.
 
 \emph{(2)} Continuing the previous point, if $x=f(y)$, then $g(x)=g(y)$. This is because the genus of the curve does not change under finite purely inseparable morphisms so we will have that $g(\fC_y)=g(\fC_x)$ (see \cite[Chapter IV, Proposition 2.5.]{Har77})
\end{rmk}

\subsubsection{}

If $f:Y\to X$ is a finite morphism of quasi-smooth $k$-analytic curves and $y\in Y$ and $x=f(y)$, then $f$ induces a finite extension $\sH(y)/\sH(x)$ of degree equal to local degree of $f$ at $y$ which we denote by $\deg(f,y)$ (this is geometric ramification index of $f$ at $y$ in terminology of \cite[Section 6.3.]{BerCoh}). By \cite[Remarks 6.3.1.]{BerCoh} there exists an open neighborhood $U$ of $y$ such that $f_{|U}:U\to f(U)$ is finite and $\deg(f_{|U})=|\sH(y):\sH(x)|$. Then, morphism $f_{|U}$ induces a morphism of germs $(Y,y)\to (X,x)$. This relation is given more precise in the Berkovich theorem \cite[Theorem 3.4.1.]{BerCoh} whose special instance, adapted to our interests, we next recall.
\begin{thm}[V. Berkovich]\label{thm: Berk thm}
Given a point $x\in X$, where $X$ is a quasi-smooth $k$-analytic curve, there is an equivalence of categories $F\'et(\sH(x))$ and $F\'et(X,x)$, where the former is the category of finite extensions of $\sH(x)$ and latter is the category of finite morphisms of $k$-germs $(Y,S)\to (X,x)$.
\end{thm}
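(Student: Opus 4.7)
The plan is to construct quasi-inverse functors between the two categories and verify that they give the desired equivalence. I will focus on the essential ideas, as the theorem is a local-global statement about descending finite étale data from the completed residue field $\sH(x)$ to an affinoid neighborhood of $x$.

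\textbf{Forward functor $F\'et(X,x)\to F\'et(\sH(x))$.} Given a finite étale morphism of germs $f\colon (Y,S)\to(X,x)$, choose a representative $f_U\colon V\to U$ where $U$ is an affinoid neighborhood of $x$, $V=f_U^{-1}(U)$, and $f_U$ is finite étale. Since $f$ is finite the set $S=f^{-1}(x)$ consists of finitely many points $y_1,\dots,y_m$, each giving a finite separable extension $\sH(y_i)/\sH(x)$. The functor sends $f$ to the finite étale $\sH(x)$-algebra $\prod_{i=1}^m\sH(y_i)$, and morphisms of germs to the induced $\sH(x)$-algebra maps on completed residue fields.

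\textbf{Backward functor $F\'et(\sH(x))\to F\'et(X,x)$.} Given a finite étale $\sH(x)$-algebra $A$, decompose $A=\prod_i L_i$ with each $L_i$ a finite separable extension of $\sH(x)$, and write $L_i=\sH(x)[T]/(P_i)$ for some separable monic polynomial $P_i$ with nonzero discriminant. Since $\sH(x)$ is the completion of $\cO_{X,x}:=\varinjlim_{U\ni x}\cO_X(U)$, one approximates the coefficients of each $P_i$ closely enough by elements of $\cO_X(U)$ for some affinoid neighborhood $U$ to obtain monic polynomials $\widetilde P_i\in\cO_X(U)[T]$ whose discriminants remain invertible on $U$ after possibly shrinking $U$. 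The cover $V_i=\Spec_U(\cO_X(U)[T]/(\widetilde P_i))^{\an}\to U$ is then finite étale, and the coproduct $V:=\coprod_i V_i\to U$ with $S=$ preimage of $x$ is the desired germ. This assignment is well-defined on morphisms by functoriality of the étale lift.

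\textbf{Quasi-inverse property.} The composition backward-then-forward is naturally isomorphic to the identity by construction, since fibering at $x$ recovers $\prod_i L_i$. The composition forward-then-backward is the identity because any two finite étale covers of neighborhoods of $x$ whose fibers at $x$ are $\sH(x)$-isomorphic become isomorphic after shrinking: this follows from the henselian nature of $\cO_{X,x}$, which guarantees uniqueness of étale lifts by Hensel's lemma once a sufficiently accurate approximation is fixed.

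\textbf{Main obstacle.} The crux of the argument is the descent/spreading-out step for the backward functor. One needs that every finite étale $\sH(x)$-algebra descends, after shrinking, to a finite étale cover of an affinoid neighborhood $U$ of $x$, and that this descent is essentially unique. This is an Artin-approximation/henselian statement for the filtered colimit $\cO_{X,x}\to\sH(x)$ and its interaction with finite étale algebras. Establishing it rigorously in the Berkovich-analytic setting—where $\sH(x)$ is obtained by a non-trivial completion of $\cO_{X,x}$ with respect to the seminorm $|\cdot|_x$, rather than by the usual $m$-adic completion—is the technical heart of \cite[Theorem 3.4.1]{BerCoh}. Once this descent/uniqueness is in hand, the remaining verifications (that the constructed functors are well-defined on morphisms and mutually inverse) are formal.
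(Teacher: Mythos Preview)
The paper does not give its own proof of this statement: it is quoted verbatim as a theorem of Berkovich and attributed to \cite[Theorem 3.4.1]{BerCoh}, with no argument supplied beyond the citation. So there is nothing in the paper to compare your proposal against.

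That said, your sketch is a reasonable outline of how Berkovich's proof proceeds, and you correctly identify the crux: the spreading-out/descent step showing that finite \'etale $\sH(x)$-algebras extend (essentially uniquely) to finite \'etale covers of a neighborhood of $x$. Your approximation-by-polynomials argument is the right idea, but as written it is only heuristic: you need a Krasner-type lemma to guarantee that a sufficiently close monic approximation $\widetilde P_i$ yields an $\sH(x)$-algebra isomorphic to $L_i$ after base change, and you need to know that $\cO_{X,x}$ is Henselian (which is itself a theorem in \cite{BerCoh}) to get the uniqueness of the lift. Since you explicitly defer the hard step to \cite[Theorem 3.4.1]{BerCoh} in your ``Main obstacle'' paragraph, your proposal is in the end doing exactly what the paper does: invoking Berkovich's result as a black box.
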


\begin{rmk}
 In the \lc the situation is more general. The base field $k$ is of arbitrary characteristic, $x$ is a point in a Berkovich space $X$, and the categories involved are the same except one considers finite separable extensions of $\sH(x)$ and morphisms of germs $(Y,S)\to (X,x)$ which have a finite \'etale representative. Since we assumed that $k$ is of characteristic 0 every finite extension of $\sH(x)$ is separable and every finite morphisms of germs will have an \'etale representative.
\end{rmk}

\section{Factorization of morphisms of quasi-smooth $k$-analytic curves}
We assume until the rest of the article that the base field $k$ in addition satisfies $\text{char}(\wtilde{k})=p>0$.
\subsection{Local factorizations}

\subsubsection{}

 Let $f:Y\to X$ be a finite morphisms of quasi-smooth $k$-analytic curves and let $U\subset Y$ be a connected $k$-analytic curve. 
 \begin{defn}
 We say that $\fU=(U_i,f_i)_{i=1,\dots,m}$ is a \emph{factorization of $f$ over $U$} if $U_1=U$, and if the morphisms $f_i:U_i\to U_{i+1}$ , $i=1,\dots,m-1$ and $f_{m}:U_{m}\to f(U)$ are finite morphism of quasi-smooth $k$-analytic curves. We say that $\fU$ is of \emph{length $m$}. 
 
 If $y\in Y$ and $\fU$ as above, we say that $\fU$ is a factorization of $f$ at $y$ if $U_1$ is a neighborhood of $y$ in $Y$. 

Given two factorizations $\fU=(U_i,f_i)$ and $\fU'=(U'_i,f'_i)$ of $f$ over some $k$-analytic curves $U$ and $U'$ in $Y$, of length $m$, we say that $\fU$ is a \emph{subfactorization} of $\fU'$, or that \emph{$\fU$ is induced by $\fU'$}, and write $\fU\subset \fU'$, if $U_i\subset U'_i$ and $f_i=f'_{|U_i}$, for each $i=1,\dots,m$.

Finally, let $\fU=(U_i,f_i)$ and $\fU'=(U'_i,f'_i)$ be two factorizations of $f$ over $U$ of the same length $m$. We say that they are \emph{isomorphic} if there exists isomorphisms $g_i:U_i\to U'_i$ for $i=1,\dots,m$, such that for any such $i$, $f'_i\circ g_i=g_{i+1}\circ f_i$. 
\end{defn}

\subsubsection{}
As we have seen in Theorem \ref{thm: Temkin's thm} the fields $\sH(x)$ are almost monogeneous so in particular Theorem \ref{thm: canonical towers} applies. In combination with Theorem \ref{thm: Berk thm} we obtain the following.

\begin{thm}\label{thm: local fact}
 Let $f:Y\to X$ be a finite morphism of quasi-smooth $k$-analytic curves and let $y\in Y^{\text{hyp}}$ and $x=f(y)$. Suppose that $\pf_{f,y}$ has characteristic $n$. Then, there exists a neighborhood $U$ of $y$ in $Y$, a factorization $\fU=(U_i,f_i)_{i=1,\dots,n}$, of $f$ over $U$ such that
 \begin{enumerate}
  \item Let us put $y_1:=y$ and $y_{i}=f_{i-1}(y_{i-1})$, for $i=2,,\dots,n$. Then,
  \begin{enumerate}
   \item Each $f_i$, $i=1,\dots,n$, is residually purely inseparable with simple profile at $y_i$.
   \item The morphism $f_{n+1}$ is residually separable and in fact an isomorphism if $f$ is residually purely inseparable at $y$.
  \end{enumerate}
  \item The factorization $\pf_{f,y}=\pf_{f_n,y_n}\circ\dots\circ\pf_{f_1,y_1}$ is the canonical factorization of $\pf_{f,y}$ in $\Lambda_p$.
  \item Given any other factorization $\fU'$ of $f$ at $y$ satisfying the properties \emph{1} and \emph{(2)} above, then there are subfactorizations of $\fU$ and $\fU'$ at $y$ which are canonically isomorphic.
 \end{enumerate}

\end{thm}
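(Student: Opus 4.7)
The plan is to transport the canonical algebraic tower from Theorem~\ref{thm: canonical towers} across the Berkovich equivalence of Theorem~\ref{thm: Berk thm}, and then realize the resulting chain of germs by actual quasi-smooth $k$-analytic curves. To set up the algebra, Theorem~\ref{thm: Temkin's thm}(4) tells us that $\sH(x)$ is almost monogeneous, hence Herbrand by Section~\ref{sec: almost monogeneous}, and that $\H_{\sH(y)/\sH(x)}=\pf_{f,y}$. Applying Theorem~\ref{thm: canonical towers} to $\sH(y)/\sH(x)$ produces a unique tower
$$
\sH(y)=L_1/L_2/\cdots/L_n/L_{n+1}=K'/\sH(x)
$$
in which each $L_i/L_{i+1}$ is simply ramified, the top extension $K'/\sH(x)$ has trivial Herbrand function, and the identity $\H_{L_n/L_{n+1}}\circ\cdots\circ\H_{L_1/L_2}=\H_{\sH(y)/\sH(x)}$ is the canonical factorization of $\pf_{f,y}$ in $\Lambda_p$. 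Once existence in (1) is established, assertion (2) will be immediate from this.

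For existence, I would next invoke Theorem~\ref{thm: Berk thm} at the point $x$: the tower of fields above corresponds to a sequence of finite morphisms of germs
$$
(Y,y)\longrightarrow (Z_2,y_2)\longrightarrow\cdots\longrightarrow (Z_{n+1},y_{n+1})\longrightarrow (X,x)
$$
whose composition is the germ of $f$ at $y$. Shrinking to representatives, one obtains an open neighborhood $U=U_1$ of $y$, quasi-smooth intermediate $k$-analytic curves $U_i$, and finite morphisms $f_i:U_i\to U_{i+1}$ whose composition is $f_{|U}$; quasi-smoothness of the $U_i$ is ensured by applying Lemma~\ref{lem: skeleta 2 of 3} iteratively, after enlarging a skeleton of $f$ to pass through $y$. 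To verify (1), I would transport the profile information: under the Berkovich equivalence the local extension of completed residue fields induced by $f_i$ at $y_i$ is $L_i/L_{i+1}$, so Theorem~\ref{thm: Temkin's thm}(4) applied to each $(f_i,y_i)$ yields $\pf_{f_i,y_i}=\H_{L_i/L_{i+1}}$. For $i\le n$ this profile is simple of degree $|L_i:L_{i+1}|$, so Lemma~\ref{lem: sep insep} makes $f_i$ residually purely inseparable at $y_i$. For $i=n+1$ the profile is trivial, hence Lemma~\ref{lem: sep insep} gives residual separability; and if $f$ is residually purely inseparable at $y$ then $\fs_{f,y}=1$ forces $K'=\sH(x)$, whence the germ $(Z_{n+1},y_{n+1})\to(X,x)$ is an isomorphism and $f_{n+1}$ becomes an isomorphism after possibly further shrinking.

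For the uniqueness statement (3), given another factorization $\fU'=(U_i',f_i')$ satisfying (1) and (2), I would set $y_1'=y$ and $y_i'=f_{i-1}'(y_{i-1}')$, then apply the Berkovich equivalence to produce a tower of subfields $\sH(y)=L_1'/\cdots/L_{n+1}'/\sH(x)$. Lemma~\ref{lem: sep insep}, combined with the hypotheses that each $f_i'$ ($i\le n$) has simple profile of the correct degree and that $f_{n+1}'$ is residually separable, forces each $L_i'/L_{i+1}'$ to be simply ramified and $L_{n+1}'/\sH(x)$ to have trivial Herbrand function; assumption (2) on the canonical factorization in $\Lambda_p$ translates into the break-point inequalities $\H_{L_i'/L_{i+1}'}(b_i)<b_{i+1}$ required by Theorem~\ref{thm: canonical towers}. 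The uniqueness part of that theorem then yields $L_i'=L_i$ for all $i$, and Theorem~\ref{thm: Berk thm} converts the identity of intermediate fields into canonical isomorphisms of germs $(U_i,y_i)\simeq (U_i',y_i')$ compatible with the $f_i$ and $f_i'$; choosing sufficiently small common representatives produces the required canonically isomorphic subfactorizations.

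The main technical obstacle is the germ-to-space passage. Theorem~\ref{thm: Berk thm} yields only an equivalence at the level of germs, and one must verify that the intermediate objects can be chosen as genuine connected quasi-smooth $k$-analytic curves on which the $f_i$ assemble into an honest factorization of $f_{|U}$. This is precisely where Lemma~\ref{lem: skeleta 2 of 3} is crucial: it guarantees that whenever a finite morphism of quasi-smooth curves factors through an intermediate analytic space, that space is itself a quasi-smooth curve with a compatible skeleton, which propagates inductively through the tower and allows a coherent choice of representatives at every level.
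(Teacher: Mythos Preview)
Your proof follows essentially the same route as the paper: apply Theorem~\ref{thm: canonical towers} to the extension $\sH(y)/\sH(x)$, transport the tower through the Berkovich equivalence of Theorem~\ref{thm: Berk thm}, choose compatible representatives, and read off properties (1)--(2) via Lemma~\ref{lem: sep insep}; uniqueness is likewise reduced to the uniqueness clause of Theorem~\ref{thm: canonical towers}. The paper makes the choice of compatible representatives slightly more explicit---one fixes arbitrary representatives, then pulls back a small neighborhood $V$ of $x$ through the whole chain---but this is exactly what your phrase ``shrinking to representatives'' amounts to.

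One correction: your invocation of Lemma~\ref{lem: skeleta 2 of 3} is misplaced. That lemma \emph{assumes} the intermediate curve $Z$ is quasi-smooth and concludes something about its skeleton; it does not establish quasi-smoothness of an intermediate analytic space. In the present setting quasi-smoothness of the $U_i$ is not an issue anyway: since $\mathrm{char}(k)=0$, every finite morphism of $k$-analytic curves is \'etale, and a finite \'etale cover of a quasi-smooth curve is quasi-smooth. So the ``technical obstacle'' you highlight is already handled by the ambient hypotheses, and the paper accordingly does not dwell on it.
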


\begin{proof}
 Let 
 $$
 \sH(y)=L_1/L_2/\dots/L_n/L_{n+1}=K'/\sH(x)
 $$
 be the canonical tower for the extension $\sH(y)/\sH(x)$, as in Theorem \ref{thm: canonical towers}. By Berkovich theorem \ref{thm: Berk thm}, for each $L_i$, $i=1,\dots,n+1$, there exists a quasi-smooth, $k$-analytic curve, say $Y_i$, and a point $y_i\in Y_i$ (with $Y_1$ a neighborhood of $y$ in $Y$, $y_1=y$), such that $\sH(y_i)\simeq L_i$. The same theorem implies that there exist finite morphisms $f_i:(Y_i,y_i)\to (Y_{i+1},y_{i+1})$, $i=1,\dots, n$, (resp. $f_{n+1}:(Y_{n+1},y_{n+1})\to (V',x)$, where $V'$ is a neighborhood of $x$ in $X$) that induce
extensions $\sH(y_i)/\sH(y_{i+1})$ (resp. $\sH(y_{n+1})/\sH(x)$). 

We next choose a compatible set of representatives of morphisms $f_i$. To this end, let $f_i:Y_{i,0}\to Y_{i+1}'$ be a representative of $f_i$ such that $Y_{i+1}'\subset Y_{i+1,0}$, for $i=1,\dots,n$ (note that we can choose such representatives inductively), and let $V$ be an open neighborhood of $x$ in $X$ that is contained in $f_{n+1}\circ\dots\circ f_1(Y_{1,0})$. Finally, let $U_i:=f^{-1}_i\circ\dots\circ f_{n}^{-1}(V)$. Clearly, $U_i$ is an open neighborhood of $y_i$ in $Y_i$ and $f_i:U_i\to U_{i+1}$ is a finite morphism that induces the extension $\sH(y_i)/\sH(y_i)$. Also, each $f_i$, $i=1,\dots,n$, is residually purely inseparable at $y_i$ while $f_n$ is residually separable by Lemma \ref{lem: sep insep}. The claims \emph{(1)} and \emph{(2)} now follow by invoking Theorem \ref{thm: Temkin's thm}.

For the uniqueness, suppose $\fU'=(U'_i,f'_i)_{i=1,\dots,n+1}$ is another factorization of $f$ at $y$ satisfying the conditions \emph{(1)} and \emph{(2)} ($\fU'$ has to have the same length as $\fU$ because of the conditions!). We note that the morphism $f_i':(U_i',y_i')\to (U_{i+1}',y_{i+1}')$ induces the extension $\sH(y_i)/\sH(y_{i+1})$ isomorphic to $L_i/L_{i+1}$ (because of properties \emph{(1)} and \emph{(2)}), hence isomorphic to $\sH(y_i)/\sH(y_{i+1})$. By Theorem \ref{thm: Berk thm} there are isomorphisms of $k$-\textit{Germs} $g_i:(U_i,y_i)\to (U_i',y_i')$ and $g:(V,x)\to (V',x')$, such that the following diagrams are commutative for $i=1,\dots,n$, 
$$
 \begin{tikzcd}
  (U_i,y_i)\arrow{r}{g_{i}} \arrow[swap]{d}{f_{i|U_i}} & (U'_i,y'_i) \arrow{d}{f'_{i|U'_i}}\\
  (U_{i+1},y_{i+1}) \arrow{r}{g_{i+1}}& (U'_{i+1},y_{i+1}'),
 \end{tikzcd}
 \quad\text{and}\quad
 \begin{tikzcd}
  (U_{n+1},y_{n+1})\arrow{r}{g_{n+1}} \arrow[swap]{d}{f_{n+1|U_{n+1}}} & (U'_i,y_i) \arrow{d}{f'_{n+1|U'_{n+1}}}\\
  (V,x) \arrow{r}{g}& (V',x').
 \end{tikzcd}
$$

By choosing compatible representatives similarly as before, the theorem follows.
\end{proof}
\begin{rmk}
  By shrinking the curves $U_i$ constructed above, if necessary, we can make sure that they are all basic curves (see Section \ref{sec: triangulations}). 
\end{rmk}

\begin{defn}
 Let $f:Y\to X$ be a finite morphism of quasi-smooth $k$-analytic curves and let $y\in Y^\text{hyp}$. Let $U_i$ and $f_i$ be as in Theorem \ref{thm: local fact}. We call the data $\fU_{f,y}:=(U_i,f_i)_{i=1,\dots,n}$ the \emph{canonical factorization of $f$ at $y$}.
 
 We say that $\fU_y$ is canonical factorization by \emph{basic curves} if all the curves in $\fU_{f,y}$ are basic. 
 
 
 If $f$ is clear from the context we may omit writing it in the index.
\end{defn}

The following lemma suggests that we can glue ''compatible`` factorizations.

\begin{lemma}\label{lem: gluing factorizations}
 Let $f:Y\to X$ be a finite morphism of quasi-smooth $k$-analytic curves, let $\fU_1$ and $\fU_2$ be factorizations of $f$ over some $k$-analytic subsets $U_1$ and $U_2$ of $Y$, respectively. Suppose there is a nonempty $k$-analytic subsets $U_0\subset U_1\cap U_2$ such that the restrictions $\fU_{1|U_0}$ and $\fU_{2|U_0}$ are isomorphic. Then, there exists a factorization of $f$ over $U_1\cup U_2$, unique up to a canonical isomorphism, whose restrictions to $U_1$ and $U_2$ are isomorphic to $\fU_1$ and $\fU_2$, respectively.
\end{lemma}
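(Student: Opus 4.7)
The plan is to glue the two factorizations level by level along the given isomorphism over $U_0$. Write $\fU_j=(U_{j,i},f_{j,i})_{i=1,\ldots,m}$ for $j=1,2$; the two lengths must agree, since an isomorphism of factorizations forces the lengths to match. Set $V_1^{(j)}:=U_0\subset U_{j,1}$ and inductively $V_{i+1}^{(j)}:=f_{j,i}(V_i^{(j)})\subset U_{j,i+1}$; these are the intermediate pieces of the restrictions $\fU_j|_{U_0}$. The hypothesis supplies isomorphisms $\phi_i:V_i^{(1)}\iso V_i^{(2)}$ satisfying $\phi_{i+1}\circ f_{1,i}|_{V_i^{(1)}}=f_{2,i}|_{V_i^{(2)}}\circ\phi_i$, compatible with the common morphism to $f(U_0)\subset X$ (so the diagram at the target can be taken to commute via the identity).

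For each $i=1,\ldots,m$, I would form the analytic curve $W_i:=U_{1,i}\sqcup_{\phi_i}U_{2,i}$ obtained by pasting $U_{1,i}$ and $U_{2,i}$ along $\phi_i$. This is the standard Berkovich gluing of $k$-analytic spaces along an open analytic subdomain; it produces a quasi-smooth $k$-analytic curve in which $U_{1,i}$ and $U_{2,i}$ sit as open subspaces with intersection identified with $V_i^{(1)}\simeq V_i^{(2)}$. In particular $W_1=U_1\cup U_2$. By the intertwining property of the $\phi_i$'s, the morphisms $f_{1,i}$ and $f_{2,i}$ agree on the overlap, so they glue to a morphism $h_i:W_i\to W_{i+1}$. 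Finiteness of $h_i$ is local on the target and is therefore inherited from $f_{1,i}$ and $f_{2,i}$; the same argument at level $m$ produces the final finite map $W_m\to f(W_1)$. Thus $\fW:=(W_i,h_i)_i$ is a factorization of $f$ over $U_1\cup U_2$ whose restrictions to $U_1$ and $U_2$ are tautologically $\fU_1$ and $\fU_2$.

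For uniqueness, any other such factorization $\fW'=(W_i',h_i')$ comes equipped, by hypothesis, with isomorphisms to $\fU_1$ and $\fU_2$ over $U_1$ and $U_2$ respectively; these isomorphisms agree on the glued locus precisely via the $\phi_i$'s (which is the content of the compatibility assumption), so the universal property of the pasting construction produces a canonical isomorphism $\fW\iso\fW'$ intertwining the structure morphisms. The main obstacle, and essentially the only nontrivial point, is legitimacy of the pasting step: one must check that each $V_i^{(j)}$ is an open analytic subdomain of $U_{j,i}$ so that the standard Berkovich gluing applies. This reduces to observing that finite morphisms of quasi-smooth $k$-analytic curves are open (they are quasi-finite and flat), so that $V_{i+1}^{(j)}=f_{j,i}(V_i^{(j)})$ remains open once $V_1^{(j)}=U_0$ is, an assumption implicit in the convention of ``$k$-analytic subset'' employed in the definition of factorization.
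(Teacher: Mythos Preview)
Your proposal is correct and follows essentially the same approach as the paper's own proof: level-by-level gluing of the intermediate curves along the given isomorphisms over $U_0$, then gluing of the morphisms. The paper is terser, simply invoking \cite[Proposition~1.3.3]{BerCoh} for the gluing of $k$-analytic spaces and leaving finiteness and uniqueness implicit, whereas you spell out these points and flag the openness of the overlap as the one nontrivial hypothesis---but the underlying argument is the same.
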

\begin{proof}
 We note that the two factorizations have the same length as they have isomorphic subfactorizations. Let us write $\fU_i=(U_{i,j},f_{i,j})_{j=1,\dots,n}$ and $\fU_{i|U_0}=(U_{i,j,0},f_{i,j,0})_{j=1,\dots,n}$ for $i=1,2$, and let $g_j:U_{1,j,0}\to U_{2,j,0}$, $j=1,\dots,n$, be the isomorphism inducing the isomorphism of the factorizations. Then, by \cite[Proposition 1.3.3.]{BerCoh} there is a $k$-analytic curve $U'_j$, unique up to a canonical isomorphism, and obtained by ``gluing'' the curves $U_{1,j}$ and $U_{2,j}$ via $g_j$. Consequently, the morphisms $f_{1,j}$ and $f_{2,j}$ glue as well and if we denote the corresponding morphism by $f_j$, then the factorization $\fU:=(U'_j,f_{j})_{j=1,\dots,n}$ is the one we were looking for.
\end{proof}

%
%

\subsection{Global factorizations}\label{sec: global}\hfill

Continuing the discussion of factorizations we note that given two factorizations $\fU$ and $\fU'$ of finite morphisms of quasi-smooth $k$-analytic curves $f:Y\to X$ and $g:X\to Z$, respectively, one can concatenate them in a natural way in order to obtain a factorization of $g\circ f: Y\to Z$. The following definition will allow us to compactly state the global factorization results that follow.

\begin{defn}\label{front-refined}
 Let $f:Y\to X$ be a finite morphism of quasi-smooth $k$-analytic curves and let $\fU=(U_i,f_i)_{i=1,\dots,n}$ and $\fU'=(U'_i,f'_i)_{i=1,\dots,m}$, $m>n$, be two factorizations of $f$ (over $Y$). We say that $\fU$ can be {\em front-refined} to $\fU'$ if there exists a factorization of $\fU''$ of $f_1:Y=U_1\to U_2$ such that $(U_i,f_i)_{i=2,\dots,n}$ concatenated to $\fU''$ is isomorphic to $\fU'$.
 
 Let $y\in Y$. We say that $\fU$ can be {\em locally front-refined} at $y$ to $\fU'$ if there exists a subfactorization of $\fU$ at $y$ that can be front-refined to a subfactorization of $\fU'$ at $y$. 
\end{defn}

Finally, to be able to extend local canonical factorizations to global ones, we  need a stronger notion of radiality.
 \begin{defn}\label{def: uniform n radial}
  Let $f:Y\to X$ be a finite morphism of quasi-smooth $k$-analytic curves and let $\Gamma=(\Gamma_Y,\Gamma_X)$ be a skeleton of $f$.  
  
 We say that $f$ is \emph{uniformly weakly $n$-radial} (resp. \emph{$n$-radial}, resp. \emph{radial}) \emph{with respect to $\Gamma$} if $\fs_{f,y}$ is the same for all $y\in\Gamma_Y$ and if $f_{|D}$ is weakly $n$-radial (resp. $n$-radial, resp. radial) for every connected component  $D$ in $Y\setminus \Gamma_Y$ with $\Dc_{f_{|D},n}$ (resp. $\Dc_{f_{|D}}$) not depending on $D$.
 
  We say that $f$ is \emph{uniformly residually separable with respect to $\Gamma$} if $f$ is weakly $1$-radial with respect to $\Gamma$ and is residually separable at every point $y\in\Gamma_Y$ of constant degree along $\Gamma_Y$.
 \end{defn}

 \begin{rmk}
  If the skeleton $\Gamma=\emptyset$, then the previous notion of uniformly (weakly) $n$-radial morphism with respect to $\Gamma$ reduces to the notion of (weakly) $n$-radial morphism of open unit discs.
 \end{rmk}


\begin{thm}\label{thm: global fact}
  Let $f:Y\to X$ be a finite morphism of quasi-smooth $k$-analytic curves, and let $\Gamma=(\Gamma_Y,\Gamma_X)$ be a skeleton of $f$. Suppose that $f$ is uniformly weakly $n$-radial with respect to $\Gamma$. Then, there exists a factorization of $f$ of the form
  \begin{equation}\label{eq: global can fact}
  Y\xrightarrow[]{g}Z\xrightarrow{h}Z'\xrightarrow{h'}X
  \end{equation}
   such that
   \begin{enumerate}
    \item The morphism $h'$ is uniformly residually separable with respect to $(h\circ g(\Gamma_Y),\Gamma_X)$ and of degree equal to $\fs_{f,y}$ (for any $y\in\Gamma_Y$).
    \item The morphism $g$ is uniformly weakly $(n-1)$-radial with respect to $(\Gamma_Y,g(\Gamma_Y))$, while the morphism $h$ is uniformly radial of characteristic 1 with respect to $(g(\Gamma_Y),h\circ g(\Gamma_Y))$.
    \item Moreover, for every point $y\in\Gamma_Y$, the factorization \eqref{eq: global can fact} can be locally front refined at $y$ to a canonical factorization of $f$ at $y$. 
   \end{enumerate}
   The factorization \eqref{eq: global can fact} is unique up to a canonical isomorphism.
 \end{thm}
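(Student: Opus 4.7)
The plan is to construct the factorization by invoking Theorem \ref{thm: local fact} at each hyperbolic point of $\Gamma_Y$, regrouping the local canonical factors according to the uniform weak $n$-radial structure, and then gluing via Lemma \ref{lem: gluing factorizations}. Uniqueness will reduce to the pointwise uniqueness from Theorem \ref{thm: local fact}(3).

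At each hyperbolic point $y \in \Gamma_Y$, Theorem \ref{thm: local fact} supplies a canonical factorization of $f$ as a tower of simple, residually purely inseparable morphisms $f_1^y, \ldots, f_{n^y}^y$ followed by a (possibly trivial) residually separable morphism $f_{n^y+1}^y$, corresponding to the canonical $\Lambda_p$-decomposition of $\pf_{f,y}$. The uniform weak $n$-radiality hypothesis forces the $n$ largest dominant terms of $\pf_{f,y}$ to coincide across $\Gamma_Y$; by the explicit formulas in Lemmas \ref{lem: factoring in lambda} and \ref{lem: H in lambda}, the outermost $n - 1$ simple factors of the local tower therefore have uniform degrees along $\Gamma_Y$. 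I would regroup the local tower as
\[
g^y := f_{n^y - 1}^y \circ \cdots \circ f_1^y, \qquad h^y := f_{n^y}^y, \qquad (h')^y := f_{n^y + 1}^y,
\]
so that $f$ locally equals $(h')^y \circ h^y \circ g^y$ at $y$.

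Next I would glue the local data. The uniqueness clause of Theorem \ref{thm: local fact} together with the continuity and piece-wise monomial variation of $y \mapsto \pf_{f,y}$ from Theorem \ref{thm: Temkin's thm}(2)(3) makes the canonical factorizations at overlapping points canonically isomorphic on their intersections, so iteratively applying Lemma \ref{lem: gluing factorizations} produces quasi-smooth $k$-analytic curves $Z$ and $Z'$ with finite morphisms $Y \xrightarrow{g} Z \xrightarrow{h} Z' \xrightarrow{h'} X$ composing to $f$. Extension to an open disc $D$ in $Y \setminus \Gamma_Y$ is immediate: $f|_D$ is itself weakly $n$-radial, and the canonical $\Lambda_p$-factorization of $\pf_{f|_D}$ combined with Theorem \ref{thm: Berk thm} yields disc-level factorizations matching the skeletal construction. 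Properties (1) and (2) then follow from Lemma \ref{lem: sep insep}: $h'$ is residually separable of degree $\fs_{f,y}$, $h$ is simple with uniform dominant terms (uniformly radial with one break in its profile), and $g$ inherits the outermost $n - 1$ uniform dominant terms. Property (3) holds because further splitting $g$ into its constituent simple factors and composing with $h$, $h'$ reconstructs, by construction, a canonical factorization of $f$ at $y$. Uniqueness follows from Theorem \ref{thm: local fact}(3) at each $y$, with the isomorphisms glued by Lemma \ref{lem: gluing factorizations}.

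The principal obstacle is establishing compatibility of the local regroupings as $y$ varies along $\Gamma_Y$: while uniform weak $n$-radiality gives uniform numerical invariants for the blocks $g^y$, $h^y$, $(h')^y$, their profiles still depend on $y$. The key structural input is that canonical factorizations in $\Lambda_p$ are entirely determined by the profile, so Theorem \ref{thm: Temkin's thm}(3) transfers to piece-wise controlled variation of the blocks, and Theorem \ref{thm: local fact}(3) supplies the canonical isomorphisms on overlaps demanded by Lemma \ref{lem: gluing factorizations}.
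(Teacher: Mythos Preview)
Your overall strategy—local canonical factorizations via Theorem \ref{thm: local fact}, regrouped into three blocks $(g^y,h^y,(h')^y)$, then glued along $\Gamma_Y$ via Lemma \ref{lem: gluing factorizations}—is essentially the paper's. The paper organizes things slightly differently, first globally splitting off the residually separable part (Lemma \ref{lem: sep fact}) and only afterward peeling off the last simple factor, but that is presentation rather than substance.

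The genuine gap is your extension over the open discs $D$ in $Y\setminus\Gamma_Y$. You declare this step ``immediate'' on the grounds that ``the canonical $\Lambda_p$-factorization of $\pf_{f|_D}$ combined with Theorem \ref{thm: Berk thm} yields disc-level factorizations matching the skeletal construction.'' Two things fail here. First, $f|_D$ is only \emph{weakly} $n$-radial, so it has no coordinate-independent profile $\pf_{f|_D}$: there is nothing in $\Lambda_p$ to canonically factor. Second, Theorem \ref{thm: Berk thm} produces factorizations of germs at individual points, not morphisms defined over an entire disc; invoking it does not by itself give you curves $Z\supset h(g(D))$ and $Z'\supset h'(h(g(D)))$ over which $f|_D$ factors.

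The paper handles this disc case in a separate, substantial step (Lemma \ref{lem: fact discs}), argued by induction on $\deg(f)$. The inductive step descends to the border $b$ of $1$-radiality inside $D$, where the residual separable degree jumps (Lemma \ref{lem: res sep at border}); one applies the separable-part factorization (Lemma \ref{lem: sep fact}) at the Shilov points of the closed sub-discs of radius $b$, invokes the inductive hypothesis on the resulting lower-degree piece, and then glues these sub-disc factorizations with local factorizations at points of radius larger than $b$. Only after this is done can the disc-level factorization be glued to the skeletal one you constructed. Without this mechanism your curves $Z,Z'$ are defined only over a neighborhood of $\Gamma_Y$, not over all of $Y$.

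A smaller point: the phrase ``the outermost $n-1$ simple factors of the local tower have uniform degrees'' is more than you need and slightly misleading, since the number of simple factors $n^y$ in the canonical tower at $y$ can exceed $n$ and vary with $y$. What is actually required (and what the uniform weak $n$-radiality gives, via the constancy of $\Dc_{f,n}$ and of $\fs_{f,y}$) is that the \emph{single} outermost simple factor $h^y$ and the separable factor $(h')^y$ have constant degree; the remaining $n^y-1$ factors are lumped into $g^y$, whose weak $(n-1)$-radiality is then read off from the remaining elements of $\Dc_{f,n}$.
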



\begin{rmk}\label{rmk: global fact for discs}
 In case that $\Gamma=\emptyset$, that is when $f$ is a finite morphism of open unit discs, we note that the theorem simply asserts the factorizaton of $f$ into a simple morphism (that is $h$) and an $(n-1)$-radial morphism (that is $g$).  The ``canonicity'' of the factorization comes from the fact that for every point $y\in Y$ that is the Shilov point of a disc in $Y$ of radius close enough to 1, the factorization can be locally front refined at $y$ to a canonical factorization of $f$ at $y$. 
\end{rmk}

  We start with some preliminary results. In particular, in the next lemma we will first show that we can factor out the ``residually separable'' part of the morphism $f$, that is we will show the existence of $Z'$, $h\circ g$ and $h'$, and then later we will deal with existence of $g$ and $h$. Although the arguments at places will be almost repetitive (after all, the main tool will be Theorem \ref{thm: local fact} together with Lemma \ref{lem: gluing factorizations}), we hope that separating the discussion of $h'$ from $h\circ g$ will make the presentation more clear.
 
 \begin{lemma}\label{lem: sep fact}
 Keep the setting as in Theorem \ref{thm: global fact}. Then, there exists a factorization of $f$ of the form 
 \begin{equation}\label{eq: fact help}
 Y\xrightarrow[]{H}Z'\xrightarrow{h'}X,
 \end{equation}
 such that 
 \begin{enumerate}
  \item The morphism $h'$ is uniformly residually separable with respect to $(g(\Gamma_Y),\Gamma_X)$ and of degree $\fs_{f,y}$ for any $y\in \Gamma_Y$ while the morphism $H$ is uniformly weakly $n$-radial with respect to $(\Gamma_Y,g(\Gamma_Y))$. Moreover, for every point $y\in \Gamma_Y$, $H$ is residually purely inseparable at $y$.
  \item For any $y\in \Gamma_Y$, the factorization \eqref{eq: fact help} can be locally front refined at $y$ to a canonical factorization of $f$ at $y$.
 \end{enumerate}
The curve $Z'$ and morphisms $H$ and $h'$ are unique up to a canonical isomorphism.
\end{lemma}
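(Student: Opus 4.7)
The strategy is to extract the residually separable quotient of $f$ locally at each $y\in\Gamma_Y$ and then glue. For every $y\in\Gamma_Y$ the field $\sH(f(y))$ is almost monogeneous (Theorem \ref{thm: Temkin's thm}(4)), so the finite extension $\sH(y)/\sH(f(y))$ is Herbrand and Theorem \ref{thm: local fact} supplies a canonical factorization $\fU_{f,y}=(U_{y,i},f_{y,i})_{i=1,\dots,n_y+1}$ of $f$ in a neighborhood of $y$, in which $f_{y,1},\dots,f_{y,n_y}$ are residually purely inseparable and $f_{y,n_y+1}$ is residually separable. I set
\[
H_y:=f_{y,n_y}\circ\cdots\circ f_{y,1},\qquad h'_y:=f_{y,n_y+1},
\]
so that $f_{|U_{y,1}}=h'_y\circ H_y$, $H_y$ is residually purely inseparable at $y$ (a composition of such is still such), and $h'_y$ is residually separable of degree $\fs_{f,y}$. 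By Theorem \ref{thm: canonical towers} the intermediate field $\sH(H_y(y))$ is precisely the unique maximal subextension $K'_y\subseteq\sH(y)$ with $K'_y/\sH(f(y))$ of trivial Herbrand function.

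The next step is to glue these local data. Let $y,y'\in\Gamma_Y$ have neighborhoods overlapping in an open set $W$. At every $z\in W\cap Y^{\text{hyp}}$ the same canonical construction produces a unique $K'_z$, and by maximality both $\sH(H_y(z))$ and $\sH(H_{y'}(z))$ must coincide with $K'_z$. Berkovich's Theorem \ref{thm: Berk thm} then produces a canonical isomorphism between the two intermediate germs at $z$, compatible with the factorizations. Shrinking representatives if necessary and invoking Lemma \ref{lem: gluing factorizations} assembles the local data into a global factorization $Y\xrightarrow{H}Z'\xrightarrow{h'}X$ over a neighborhood of $\Gamma_Y$ in $Y$; this neighborhood exhausts $Y$ because every open disc in $Y\setminus\Gamma_Y$ is attached to a unique $y\in\Gamma_Y$ and already lies in $U_{y,1}$.

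For the stated properties, Lemma \ref{lem: skeleta 2 of 3} applied to $f=h'\circ H$ yields $H(\Gamma_Y)=h'^{-1}(\Gamma_X)$, so $H(\Gamma_Y)$ is a skeleton of $Z'$. Along $H(\Gamma_Y)$ the morphism $h'$ is residually separable of constant degree $\fs_{f,y}$ by the uniform hypothesis, and on each open disc $D'\subset Z'\setminus H(\Gamma_Y)$ the profile $\pf_{h'_{|D'}}$ is trivial (Lemma \ref{lem: sep insep}), making $h'_{|D'}$ an isomorphism; hence $h'$ is uniformly residually separable with respect to $(H(\Gamma_Y),\Gamma_X)$. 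For $H$, on each disc $D\subset Y\setminus\Gamma_Y$ attached to $y\in\Gamma_Y$ the identity $\pf_{f_{|D}}=\pf_{h'_{|H(D)}}\circ\pf_{H_{|D}}=\pf_{H_{|D}}$ transfers uniform weak $n$-radiality from $f$ to $H$ relative to $(\Gamma_Y,H(\Gamma_Y))$, while residual pure inseparability of $H$ at each $y\in\Gamma_Y$ is built in. Property (2) on local front-refinement holds because by construction $(H_y,h'_y)$ is obtained from $\fU_{f,y}$ by collapsing its first $n_y$ steps.

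Uniqueness of $(Z',H,h')$ up to canonical isomorphism then reduces to the uniqueness of $K'_y$ in Theorem \ref{thm: canonical towers} at each $y\in\Gamma_Y$ combined with the uniqueness in Lemma \ref{lem: gluing factorizations}. The main obstacle is the gluing compatibility on overlaps: matching the ``residually separable quotients'' of $f$ produced at two distinct points of $\Gamma_Y$, which I reduce to the canonicity of $K'_z$ in the canonical tower of $\sH(z)/\sH(f(z))$ for every $z\in Y^{\text{hyp}}$, i.e., to the uniqueness half of Theorem \ref{thm: canonical towers}.
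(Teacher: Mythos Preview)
Your local construction along $\Gamma_Y$ and the gluing step over overlaps follow essentially the same line as the paper, but the argument breaks down at the sentence ``this neighborhood exhausts $Y$ because every open disc in $Y\setminus\Gamma_Y$ is attached to a unique $y\in\Gamma_Y$ and already lies in $U_{y,1}$.'' There is no reason the neighborhoods $U_{y,1}$ produced by Theorem~\ref{thm: local fact} (ultimately by Berkovich's germ theorem) should contain the \emph{entire} open discs attached to $y$; they are only some small representatives of germs at $y$. After gluing you obtain a factorization over an open $Y'\supset\Gamma_Y$ which in general is strictly smaller than $Y$, with $Y\setminus Y'$ a disjoint union of closed subdiscs. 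This is precisely the situation the paper treats: one shrinks $Y'$ so that on each annulus $A=D\cap Y'$ the separable factor has degree $1$, hence is an isomorphism, and then glues in the trivial factorization $D\xrightarrow{f_{|D}} f(D)\xrightarrow{\id} f(D)$ over every remaining disc $D$ via that isomorphism. Without this extension step you do not get a global $Z'$ nor global morphisms $H,h'$.

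There is a related error in your verification of property~(1). From residual separability of $h'$ at $H(y)$ (equivalently $\pf_{h',H(y)}=\id$) you infer that $h'_{|D'}$ has trivial profile and is an isomorphism for every disc $D'$ attached to $H(y)$. This does not follow: a residually separable morphism at $H(y)$ corresponds to a separable map of residual curves $\fC_{H(y)}\to\fC_x$ of degree $\fs_{f,y}$, which can ramify at finitely many closed points; over the corresponding discs $h'$ has degree $>1$ and need not have trivial profile. Lemma~\ref{lem: sep insep} only controls the profile \emph{at the skeleton point}, not on attached discs. In the paper's construction this issue disappears because, by design, $h'$ is glued to be the identity over the discs outside $Y'$, which is exactly what forces $h'$ to be uniformly residually separable in the sense of Definition~\ref{def: uniform n radial}.
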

 
\begin{proof}
 It follows from Theorem \ref{thm: local fact} that for each $y\in\Gamma_y$ there exists a factorization $\fU_y$ of $f$ at $y$ (locally at $y$ unique up to a canonical isomorphism) of the form 
 \begin{equation}\label{eq: help fact}
 U_{y}\xrightarrow[]{f_{y,i}} U'_{y}\xrightarrow[]{f_{y,s}} f(U_{y}),
 \end{equation} 
 where $U_y$ is a basic curve and a connected open neighborhood of $y$ in $Y$, $U'_y$ is a basic curve and $f_{y,i}$ and $f_{y,s}$ are finite morphisms such that $f_{y,i}$ is residually purely inseparable at $y$ of degree $\fri_{f,y}$ while $f_{y,s}$ is residually separable at $f(y)$ of degree $\fs_{f,y}$. By shrinking $U_y$ if necessary, we may assume that $f_{y,i}$ remains residually purely inseparable of degree $\fri_{f,y}$ at every point $y'\in\Gamma_Y\cap U_y$, and this in turn implies that $f_{y,s}$ is residually separable at $f_{y,i}(y')$ necessarily of degree $\fs_{f,y}$. Furthermore, by shrinking some of such obtained factorizations (for various $y\in\Gamma_Y$) we may find a locally discrete subset $P\subset \Gamma_Y$ such that: 1) $\Gamma_Y\subset \cup_{y\in P}U_y$, 2) for every three distinct points $y,y',y''\in P$, $U_y, U_{y'}$ and $U_{y''}$ have an empty intersection and 3) for every two distinct $y,y'\in P$, $U_y\cap U_{y'}$ is either empty or it is contained in an open annulus in $Y$ whose skeleton is contained in $\Gamma_Y$. Now, if $y,y'\in P$ are such that $U_y\cap U_{y'}$ is not empty, by constructions of factorizations $\fU_y$ and $\fU_{y''}$, for every $y''\in\Gamma_Y\cap U_y\cap U_{y'}$, $\fU_y$ and $\fU_{y'}$ are locally isomorphic at $y''$ hence they glue according to Lemma \ref{lem: gluing factorizations}. By gluing all the factorizations $\fU_y$ for $y\in P$, we obtain quasi-smooth $k$-analytic curves $Y'$ and $Z''$, where $\Gamma_Y\subset Y'\subset Y$ and finite morphisms $H':Y'\to Z''$ and $h'':Z''\to X'$ with $f_{|Y'}=h''\circ H'$ and such that $H'$ is residually purely inseparable at the points of $\Gamma_Y$ while $h''$ is residually separable at the points of $H'(\Gamma_Y)$ and local isomorphism outside $H'(\Gamma_Y)$ (due to degree reasons). Furthermore, for every $y\in \Gamma_Y$, the obtain factorization by construction can be locally front refined at $y$ to a canonical factorization of $f$ at $y$.
 
 If $Y'=Y$ we are done because by construction $H'$ and $h''$ satisfy all the assertions of the lemma, so suppose that $Y'$ is properly contained in $Y$. By shrinking $Y'$ if necessary, we may assume that $Y\setminus Y'$ is a disjoint union of closed discs disjoint from $\Gamma_Y$ and such that if $D$ is a connected component of $Y\setminus Y'$, then so are open discs in $f^{-1}(f(D))$. Moreover, we may also assume that no connected component of $Y\setminus \Gamma_Y$ contains more than one of such discs. Let $D$ be a connected component (open disc) of $Y\setminus\Gamma_Y$ that is not contained in $Y'$ (so that $D$ contains a closed disc which is a connected component of $Y\setminus Y'$). In order to extend the constructed factorization also over $D$ and its $f$-conjugates, we note that $D\cap Y'$ is an open annulus, say $A$, $f_{|A}$ is a finite morphism with $f(A)$ an open annulus and so is $H'(A)$ assuming that we shrank $Y'$ enough. Furthermore, $\deg(h''_{|H'(A)})=1$  so $h''_{H'(A)}$ is an automorphism of $H'(A)$. But then it is easy to see that the two factorizations 
 $$
 f_{|D}:D\xrightarrow[]{f_{|D}}f(D)\xrightarrow[]{\id} f(D)\quad \text{and}\quad f_{|A}:A\xrightarrow[]{H'_{|A}} H'(A)\xrightarrow[]{h''_{|H'(A)}} f(A)
 $$
 glue via $h''^{-1}_{|H'(A)}$. By doing this for all the discs in $f^{-1}(f(D))$ and for all connected components of $Y\setminus Y'$, and by denoting the corresponding curve and morphisms by $Z'$, $H:Y\to Z'$ and $h':Z'\to X$, respectively, we obtain a global factorization of $f$, that satisfies properties \emph{(1)} and \emph{(2)} from the lemma.
 
 The asserted uniqueness of the curve $Z$ and morphisms $h$ and $g$ is also clear from the construction of the local factorizations and their uniqueness.
 \end{proof}

\begin{lemma}\label{lem: fact discs}
Theorem \ref{thm: global fact} is true when
\begin{enumerate}
 \item[Case 1.] $Y$ and $X$ are closed unit discs and $\Gamma=(\{\eta\},\{\zeta\})$, where $\eta$ and $\zeta$ are Shilov points of $Y$ and $X$, respectively.
 \item[Case 2.] $Y$ and $X$ are open unit discs and $\Gamma$ is empty (see Remark \ref{rmk: global fact for discs} for the statement in this case).
\end{enumerate}

\end{lemma}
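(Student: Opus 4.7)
The plan in both cases is to first peel off the residually separable tail via Lemma~\ref{lem: sep fact}, and then to produce the top simple factor $h$ and the weakly $(n-1)$-radial factor $g$ by gluing local canonical factorizations (Theorem~\ref{thm: local fact}) with the aid of Lemma~\ref{lem: gluing factorizations}. Concretely, applying Lemma~\ref{lem: sep fact} to $f$ yields a factorization $Y\xrightarrow{H}Z'\xrightarrow{h'}X$ in which $h'$ is uniformly residually separable with respect to the appropriate skeleton (vacuous in Case~2), $H$ is uniformly weakly $n$-radial, and $H$ is residually purely inseparable at every point of $\Gamma_Y$. In Case~1 the curve $Z'$ is again a closed unit disc with a distinguished Shilov point $\eta':=H(\eta)$ projecting to $\zeta$, and in Case~2 it is an open unit disc; in both situations the problem reduces to constructing a factorization $H=h\circ g$ with $g$ uniformly weakly $(n-1)$-radial and $h$ simple.

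In Case~1 I would apply Theorem~\ref{thm: local fact} at the Shilov point $\eta$ to obtain a canonical factorization $\fU_\eta=(U_i,f_i)_{i=1,\dots,n}$ of $H$ at $\eta$ (no residually separable tail appears, since $H$ is residually purely inseparable at $\eta$). Setting $g_\eta:=f_{n-1}\circ\cdots\circ f_1$ and $h_\eta:=f_n$, Theorem~\ref{thm: local fact}(2) identifies $\pf_{H,\eta}=\pi_n\circ\cdots\circ\pi_1$ with the canonical $\Lambda_p$-factorization of Lemma~\ref{lem: factoring in lambda}, so $h_\eta$ is simple and $g_\eta$ absorbs the first $n-1$ simple factors. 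For every open disc $D$ in $Y\setminus\{\eta\}$ the uniform weakly $n$-radial hypothesis on $H$ forces the first $n-1$ dominating terms and the $n-2$ innermost break-points of $H_{|D}$ to coincide with those of $H$ viewed from $\eta$; applying Theorem~\ref{thm: local fact} at any type~2 point of $D$ close to $\eta$ then yields a local factorization whose leading $n-1$ simple factors agree with those of $\fU_\eta$ on overlaps. Lemma~\ref{lem: gluing factorizations}, applied across the finitely many branches at $\eta$, glues these compatible local factorizations into a global $H=h\circ g$ on all of $Y$. Case~2 is handled analogously, by exhausting the open unit disc $Y$ with a cofinal family of Shilov points $\eta_\rho$ of sub-discs $D(0,\rho)$ as $\rho\to 1$, performing the same local canonical construction at each $\eta_\rho$, and pasting via Lemma~\ref{lem: gluing factorizations}.

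The hard part will be verifying that the simple factor $h$ constructed at $\eta$ remains simple uniformly on all of $Y$, i.e.\ that after $g$ absorbs the first $n-1$ canonical simple factors, the residual profile on every disc in $Y\setminus\{\eta\}$ still has exactly one break-point. I would control this by combining Lemma~\ref{lem: profile change}, which describes the transformation of profiles under restriction to a sub-disc, with the arithmetic constraints on dominating terms given by Lemma~\ref{lem: n-radial properties} and Corollary~\ref{cor: main all ineq}; together these should force the only dominating terms surviving in the restrictions of $h$ to be the two that produce its single break-point. The local front-refinement property of Theorem~\ref{thm: global fact}(3) is automatic by construction, and uniqueness up to canonical isomorphism follows from the uniqueness clauses in Theorem~\ref{thm: local fact} and Lemma~\ref{lem: gluing factorizations}.
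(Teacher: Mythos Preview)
Your overall plan (peel off the separable tail via Lemma~\ref{lem: sep fact}, then split off one simple factor $h$ and a weakly $(n-1)$-radial $g$) is the right shape, but there is a genuine gap in how you propose to build $h$ and $g$ on all of $Y$. Theorem~\ref{thm: local fact} applied at $\eta$ gives the factorization only on an open neighborhood $U_1$ of $\eta$; applying it again at a type~2 point of some $D\subset Y\setminus\{\eta\}$ ``close to $\eta$'' gives a factorization only on a small neighborhood of that point. These neighborhoods do not cover $Y$: nothing in your argument reaches points of small radius inside the discs $D$, and Lemma~\ref{lem: gluing factorizations} only glues factorizations that are already defined and compatible on overlapping domains --- it does not manufacture a factorization where none has been constructed. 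Your ``hard part'' paragraph treats the issue as one of verifying that an already-built $h$ stays simple, but the actual difficulty is constructing $h$ (and the intermediate curve $Z$) over the interior of $Y$ in the first place. The profile lemmas you cite (Lemma~\ref{lem: profile change}, Lemma~\ref{lem: n-radial properties}, Corollary~\ref{cor: main all ineq}) constrain valuation polygons; they do not produce factorizations of morphisms.

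The paper closes this gap with an induction on $\deg(f)$ together with a specific geometric observation that you are missing: at the border $b$ of $1$-radiality, the restriction of $H$ to any closed disc $D$ of radius $b$ is weakly $(n-1)$-radial, and by Lemma~\ref{lem: res sep at border} has residually separable degree $p^{\alpha-\beta}>1$ at its Shilov point $\xi$ (where $p^\beta<p^\alpha$ are the two elements of $\Dc_{H,2}$). Lemma~\ref{lem: sep fact} applied to $H_{|D}$ therefore produces, over the \emph{entire} disc $D$, a factorization whose separable tail is exactly the germ of the desired simple factor $h$ (it is an isomorphism on each sub-disc below radius $b$). One then glues this with the canonical factorization at $\xi$ to pass slightly above radius $b$, and with the canonical factorizations at points of radius $>b$ to fill in the annular region between $b$ and $1$; this is where Lemma~\ref{lem: gluing factorizations} is actually used. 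The induction on degree is what makes the invocation of Lemma~\ref{lem: sep fact} on $D$ legitimate and, simultaneously, handles Case~2 in tandem with Case~1. Without this ``go to the border of $1$-radiality and split off the separable part there'' step, your construction does not extend past a collar of $\eta$.
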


\begin{proof}
 We simultaneously consider both cases and argue by induction on degree $\deg(f)$, the case $\deg(f)=1$ being the base of induction and trivial.

Suppose that the claim is true in both cases for all morphisms satisfying the conditions of the theorem of degree smaller than some $m$ and suppose that $\deg(f)=m$. In case 1. if $\fs_{f,\eta}>1$ then by Lemma \ref{lem: sep fact} we may ``factor out'' the separable part and apply induction on the remaining morphism of closed discs and we are done. So suppose in addition for the first case that $\fs_{f,\eta}=1$, that is, $f$ is residually purely inseparable at $\eta$. Let $p^{\alpha}<p^{\beta}$ be the elements of $\Dc_{f,2}$.

Let $b$ be the border of $1$-radiality of $f$ and let $D$ be a closed disc in $Y$ of radius $b$ and with Shilov point $\xi$. Then $\fs_{f,\xi}>1$, and $f_{|D}$ is weakly $1$-radial with respect to $(\{\xi\},\{f(\xi)\})$ (see Remark \ref{rmk: properties n-radial}) \emph{(4)} and Lemma \ref{lem: res sep at border}), so Lemma \ref{lem: sep fact} applies and we conclude that there exists a factorization of $f$ over $D$ of the form 
\begin{equation}\label{eq: help fact 1}
D\xrightarrow{f_{|D,i}} D_s\xrightarrow{f_{|D,s}} f(D),
\end{equation}
where $f_{|D,i}$ is uniformly weakly $(n-1)$-radial with respect to $(\xi,f_{|D,i}(\xi))$ and is residually purely inseparable at $\xi$ while $f_{|D,s}$ is residually purely separable at $f_{|D,i}(\xi)$ (we also keep in mind other properties of \eqref{eq: help fact 1} granted by Lemma \ref{lem: sep fact}). On the other side, it follows from the canonical factorization of $f$ at $\xi$ that there is an open neighborhood $U_{\xi}$ of $\xi$ in $Y$ and a factorization of $f$ over $U_{\xi}$
\begin{equation}\label{eq: help fact 2}
 U_{\xi}\xrightarrow[]{f_{\xi,i}} U'_{\xi}\xrightarrow[]{f_{\xi,s}} f(U_{\xi}),
 \end{equation}
with $f_{\xi,i}$ residually purely inseparable at $\xi$ and $f_{\xi,s}$ residually purely separable at $f_{\xi,i}(\xi)$, and moreover, the two factorizations \eqref{eq: help fact 1} and \eqref{eq: help fact 2} are isomorphic over some $k$-analytic subset of $Y$ containing $\xi$ (because they are both constructed from canonical factorization of $f$ at $\xi$). By Lemma \ref{lem: gluing factorizations} they glue and by shrinking $U_{\xi}$ if necessary, we may assume that there exists a factorization of $f$ over some open disc $D'_\xi$ that contains $D$ and of the form 
\begin{equation}\label{eq: help fact 3}
 D'_\xi\xrightarrow{f_{|D'_\xi,i}} D''_\xi\xrightarrow{f_{|D''_\xi,s}} f(D'_\xi),
\end{equation}
where $f_{|D'_\xi,i}$ is residually purely inseparable at $\xi$ and $f_{|D''_\xi,s}$ is residually purely separable at $f_{|D'_\xi,i}(\xi)$. We further note that by shrinking $D'$ if necessary, we can (and will) assume that the factorization \eqref{eq: help fact 3} at every point $\xi'\in D'_\xi$ which is of radius bigger or equal to $b$, can be locally front refined at $\xi'$ to a canonical factorization of $f$ at $\xi'$, or more precisely, $f_{|D''_\xi,s}$ will be the last factor in the canonical factorization of $f$ at $\xi'$. This is because: 1) it is a factor of $f$ (in some neighborhood of $\xi'$); 2) of the continuity of the profile $\pf_f$ at $\xi$ and its canonical factors (at the corresponding points) and 3) of the fact that $f_{|D'}$ is weakly $n$-radial, so that $\pf_{f_{|D''_{\xi},s},f_{|D'_{\xi},i}(\xi')}$ will be the last factor in $\pf_{f,\xi}$. 

Let now $\xi\in Y$ be of radius bigger than $b$ and let 
\begin{equation}\label{eq: help fact 4} 
 U_\xi\xrightarrow{f'_{\xi}} U'_\xi\xrightarrow{f''_{\xi}}f(U_\xi)
\end{equation}
be a factorization of $f$ over some simple neighborhood of $\xi$ so that for each point $\xi'\in \Gamma_{U_\xi}$ ($\Gamma_{U\xi}$ being the skeleton of $U_\xi$ coming from $\{\xi\}$), $f''_\xi$ is the last factor in the canonical factorization of $f$ at $\xi'$, or in other words, factorization \eqref{eq: help fact 4} can be locally front refined at $\xi'$ to a canonical factorization of $f$ at $\xi'$. By an argument similar to what we discussed before, we can always find such a neighborhood $U_\xi$ of $\xi$.

Finally, for each $\xi\in Y$ of radius bigger or equal to $b$, we choose a factorization $\fU_\xi$ of $f$ at $\xi$ of the form \eqref{eq: help fact 3} or \eqref{eq: help fact 4} according to whether $\xi$ is of radius $b$ or bigger than $b$, respectively. Furthermore, by removing and shrinking some of them, we may assume that: 1) the domains of factorizations cover $Y$, 2) domains of any 3 distinct factorizations have an empty intersection. If $\fU_{\xi_1}$ and $\fU_{\xi_2}$ are such two factorizations with intersecting domains, then it is easy to see by their constructions that they satisfy the conditions of Lemma \ref{lem: gluing factorizations}, and in particular they glue. By gluing all the factorizations, we obtain a factorization of $f$ over $Y$ of the form
\begin{equation}
 Y\xrightarrow{g} Z\xrightarrow{h} X.
\end{equation}
By (the proof of) Lemma \ref{lem: skeleta 2 of 3}, we know that $Z$ is a closed (resp. open) disc in the first (resp. second) case. We also know that, by constructions of the factorizations, for each $\xi$ of radius $r(\xi)\geq b$, $g$ is of degree $p^\beta$ at $\xi$ while $h$ is of degree $p^{\alpha-\beta}$ at $g(\xi)$ which is of radius $r(g(\xi))\geq b^{p^\beta}$. On the other side, if $\xi\in Y$ and $r(\xi)<b$, then $h$ is of degree 1 at $g(\xi)$. In conclusion, $h$ is a radial morphism of open discs with simple profile which in the first case coincides with the last factor in the canonical factorization of $\pf_{f,\eta}$, while $g$ is weakly $(n-1)$-radial which follows by comparing profiles of $h$ and $g\circ g$ with respect to some compatible coordinates on $Y$, $Z$ and $X$. The rest of the claims follows easily. 
\end{proof}

\begin{proof}[Proof of Theorem \ref{thm: global fact}]

We may assume that $\Gamma_Y\neq \emptyset$ as this case is covered in the previous lemma. Further, Lemma \ref{lem: sep fact} implies that we may assume that $f$ is uniformly residually purely inseparable with respect to $\Gamma$.

Similarly as in the proofs of lemmas \ref{lem: sep fact} and \ref{lem: fact discs} we start by choosing for each $y\in \Gamma_Y$ a factorization $\fU_{y}$ of $f$ at $y$ of the form
\begin{equation}\label{eq: main help}
 U_y\xrightarrow{g_y} U'_y\xrightarrow{h_y} f(Y_y), 
\end{equation}
where $U_y$ is a simple curve with the skeleton $\Gamma_{U_y}$, $h_y$ is the last factor in some canonical factorization of $f$ at $y$ (so that $g_y$ is the composition of the remaining factors).  Furthermore, by shrinking and deleting some of the $\fU_y$, we may assume that there exists a locally finite subset $P\subset\Gamma$ such that: 1) The union $\cup_{y\in P}U_y$ covers $\Gamma_Y$; 2) for three distinct $y,y',y''\in P$, $U_{y}\cap U_{y'}\cap U_{y''}=\emptyset$ and 3) for a $y\in P$ and $y'\in \Gamma_{U_y}$, the restriction of the factorization \eqref{eq: main help} (to some neighborhood of $y'$) can be refined to a canonical factorization of $f$ at $y'$ and (the suitable restriction of) $h$ is the last factor in it (we can impose this condition on $\fU_y$ by a similar argument as in the proof of Lemma \ref{lem: fact discs} just after equation \eqref{eq: help fact 3}). Consequently, we may glue factorizations $\fU_y$, $y\in P$ and in such a way we obtain a $k$-analytic curve $Y_0\subset Y$ that contains $\Gamma_Y$ and a factorization $\fU_0$ of $f$ over $Y_0$ which we denote by 
$$
Y_0\xrightarrow{g_0} Z_0\xrightarrow{h_0} X_0.
$$
We next proceed similarly as in the proof of Lemma \ref{lem: sep fact}. If $Y_0\neq Y$, then by shrinking $Y_0$ if necessary, we may assume that $Y\setminus Y_0$ is a disjoint union of closed discs, and if $D'$ is one such a disc contained in an open unit disc $D$ attached to a point $y\in\Gamma_Y$, then the radius of $D'$ (in $D$) is bigger than $b_y$, where $b_y$ is the border of $1$-radiality of $f_{|D}$ (which depends only on $y$ and not on a chosen disc $D$). In particular this condition implies that $A:=D\cap Y_0$ is an open annulus and $f^{-1}(f(A))=A$. Moreover, the skeleton of $A$ is part of the skeleton of $Y_0$, and the restrictions of factorizations $\fU_D$ from Lemma \ref{lem: fact discs} and $\fU_0$ over $A$ are isomorphic, so in particular they glue. Continuing this process for all the discs in $Y\setminus Y_0$ we obtain the factorization $\fU$ of $f$ over $Y$ of the form 
$$
Y\xrightarrow{g} Z\xrightarrow{h} X.
$$

That the given factorization satisfies the conditions of Theorem \ref{thm: global fact} follows from its local construction and Theorem \ref{thm: local fact}. The theorem is proved.
\end{proof}

\subsubsection{}

We end this section by spelling out an immediate corollary of the previous theorem, which concerns uniformly radial morphisms.
\begin{cor}\label{cor: global fact}
  Let $f:Y\to X$ be a finite morphism of quasi-smooth $k$-analytic curves, and let $\Gamma=(\Gamma_Y,\Gamma_X)$ be its skeleton. Suppose that
 $f$ is uniformly radial with respect to $\Gamma$ and of characteristic $n$.
Then, there exists a factorization $\fU=(Y_i,f_i)_{i=1,\dots,n+1}$ of $f$ over $Y$, unique up to a canonical isomorphism and such that for each $y\in \Gamma_Y$, $\fU$ induces a canonical factorization of $f$ at $y$.

\end{cor}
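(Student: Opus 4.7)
The strategy is induction on the characteristic $n$, using Theorem \ref{thm: global fact} as the engine at each step. Since $f$ is uniformly radial of characteristic $n$, it is in particular uniformly weakly $n$-radial with respect to $\Gamma$, and Theorem \ref{thm: global fact} produces a factorization
\[
Y \xrightarrow{g} Z \xrightarrow{h} Z' \xrightarrow{h'} X
\]
in which $h$ is uniformly radial with simple profile (one break-point), $h'$ is uniformly residually separable of degree $\fs_{f,y}$, and $g$ is uniformly weakly $(n-1)$-radial with respect to $(\Gamma_Y,g(\Gamma_Y))$.

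The first step is to upgrade ``weakly'' to genuine radiality for $g$. Since $f$ is uniformly radial, the profile $\pf_{f_{|D}}$ is a fixed element $\phi\in\Lambda_p$ independent of the disc $D\subset Y\setminus\Gamma_Y$, and admits the canonical $\Lambda_p$-decomposition $\phi=\phi_{n-1}\circ\cdots\circ\phi_1$ of Lemma \ref{lem: factoring in lambda}. By the local-refinement clause of Theorem \ref{thm: global fact} combined with Theorem \ref{thm: local fact}, $\pf_{h,g(y)}$ coincides with the outermost simple factor $\phi_{n-1}$ and $\pf_{h',h\circ g(y)}=\id$ at every $y\in\Gamma_Y$. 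Transitivity of the Herbrand function (i.e.\ multiplicativity of profiles along the factorization, valid since the relevant completed residue fields are almost monogeneous by Theorem \ref{thm: Temkin's thm}) then forces $\pf_{g_{|D'}}=\phi_{n-2}\circ\cdots\circ\phi_1$ for every disc $D'\subset Y\setminus\Gamma_Y$ -- a single element of $\Lambda_p$ of characteristic $n-1$. Hence $g$ is uniformly $(n-1)$-radial. A parallel multiplicativity check on separable degrees, $\fs_{f,y}=\fs_{g,y}\cdot\fs_{h,g(y)}\cdot\fs_{h',h\circ g(y)}$ with $\fs_{h',h\circ g(y)}=\fs_{f,y}$ and $\fs_{h,g(y)}\geq 1$, forces $\fs_{g,y}=1$: thus $g$ is residually purely inseparable along $\Gamma_Y$.

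The second step is the iteration. Because $g$ is residually purely inseparable at every point of $\Gamma_Y$, each subsequent application of Theorem \ref{thm: global fact} produces a trivial (isomorphism) residually separable tail and peels off exactly one simple factor, dropping the characteristic by one. After $n-1$ such iterations we are left with an inner morphism of characteristic $1$, hence with identity profile on each disc and -- by the same separable/inseparable bookkeeping combined with Lemma \ref{lem: sep insep} -- an isomorphism at each point of the corresponding skeleton, i.e.\ an isomorphism. Assembling the original $h'$, the $n-1$ simple factors peeled off along the way, and this terminal isomorphism yields a factorization $\fU=(Y_i,f_i)_{i=1,\dots,n+1}$ of $f$ of the required length (the base case $n=1$ being immediate: the profile is already identity, $f$ is uniformly residually separable, and the length-$2$ factorization $Y\xrightarrow{f}X\xrightarrow{\id}X$ does the job).

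Local compatibility with the canonical factorization of $f$ at each $y\in\Gamma_Y$ is inherited inductively: at every iteration the local-refinement clause of Theorem \ref{thm: global fact} refines the new outer factorization to a canonical one at $y$, and stacking these refinements reproduces the full canonical factorization of $f$ at $y$ prescribed by Theorem \ref{thm: local fact}. Uniqueness up to canonical isomorphism follows iteration-by-iteration from the uniqueness clauses of Theorems \ref{thm: global fact} and \ref{thm: local fact}. The principal obstacle is the careful bookkeeping of the iteration -- verifying that ``weak'' radiality upgrades to full radiality, that the characteristic drops by exactly one at each step, that the residually separable factors trivialize after the first iteration, and that the local refinements glue compatibly across iterations -- all of which rests on the rigidity of the canonical $\Lambda_p$-decomposition (Lemma \ref{lem: factoring in lambda}).
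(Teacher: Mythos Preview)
The paper gives no proof of this corollary, describing it only as ``an immediate corollary of the previous theorem'' (Theorem \ref{thm: global fact}). Your approach---iterating Theorem \ref{thm: global fact} to peel off one simple factor at a time---is exactly the intended argument, and the proof is correct in outline.

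One point deserves tightening. When you upgrade $g$ from weakly $(n-1)$-radial to genuinely radial, you invoke ``transitivity of the Herbrand function.'' Herbrand transitivity, as stated, controls the profile \emph{at a point} $y\in\Gamma_Y$; what you need is that $\pf_{g_{|D},(T,R)}$ is independent of the compatible coordinates $(T,R)$ on each disc $D\subset Y\setminus\Gamma_Y$. This follows from the elementary identity
\[
\pf_{h\circ g,(T,S)}=\pf_{h,(R,S)}\circ\pf_{g,(T,R)},
\]
valid because a finite morphism of discs sends $\eta^T_\rho$ to $\eta^R_{\pf_{g,(T,R)}(\rho)}$. Combining this with the radiality of $f_{|D}$ and of $h_{|g(D)}$, together with the fact (from the construction in Lemma \ref{lem: sep fact}) that $h'$ is an isomorphism on each residual disc, you get that $\pf_{g_{|D},(T,R)}$ is the fixed function $\phi_{n-2}\circ\cdots\circ\phi_1$ regardless of $(T,R)$. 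With this in hand, the rest of your induction goes through as written.
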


\section{Harmonicity properties of morphisms of Berkovich curves}

\subsection{Preliminary results}
\subsubsection{} Let $f:A\to A'$ be a finite morphism of open annuli, and let $T$ and $S$ be coordinates on $A$ and $A'$ identifying them with $A^T(0;r,1)$ and $A^S(0;r',1)$, respectively. Then, $f$ expressed in $(T,S)$ coordinates may be written as
\begin{equation}\label{eq: morph coord}
S=\sum_{i\in\Z}a_i\, T^i,
\end{equation}
and since it is a finite morphism of open annuli, the valuation polygon of the right-hand side of \eqref{eq: morph coord} has only one segment with slope $d\in\Z$. Then, the absolute value $|d|_{\infty}$ is the degree of the morphism. We say that $T$ and $S$ are \emph{aligned} if  $d=|d|_\infty=\deg(f)$. This is equivalent to say that the radius of points $f(\eta^T_{\rho})$ goes to 1, as $\rho \to 1$. 

Suppose that $T$ and $S$ are aligned. Since $f$ is \'etale, the derivative $\frac{dS}{dT}$ has no zeroes on $A^T(0;r,1)$ hence we can write it as 
 $$
\frac{dS}{dT}=\eps_{f,(T,S)}\cdot T^{f,\sigma_{(T,S)}}\cdot (1+h(T)),
$$
where $\eps_{(T,S)}\in k^\circ$, and for every $\rho\in (r,1)$, $|h(T)|_{\eta^T_{\rho}}<1$. 

The following is \cite[Lemma 4]{BojRH}.
\begin{lemma}\label{lem: sigma prop}
 If $T'$ and $S'$ are coordinates on $A$ and $A'$ aligned with $T$ and $S$, respectively, then $\sigma_{(T,S)}=\sigma(T',S')$. Furthermore, if we have a finite \'etale morphism $g:A^{S}(0;r',1)\to A^U(0;r'',1)$, where $S$ and $U$ are aligned then 
 $$
 \sigma_{g\circ f,(T,U)}=\deg(f)\cdot\sigma_{g,(S,U)}+\sigma_{f,(T,S)}.
 $$
\end{lemma}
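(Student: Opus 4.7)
The plan is to extract the leading behaviour of $dS/dT$ near the outer-radius point $\eta^T_1$ and to compare it under the chain rule. For the first assertion, note that since $T$ and $T'$ are both aligned coordinates on $A$, writing $T'=\varphi(T)$ realizes $\varphi$ as an isomorphism of $A(0;r,1)$ sending the outer-radius direction to itself. Expanding $\varphi(T)=\sum_{i\in\Z}a_iT^i$, the alignment hypothesis forces the outermost slope of its valuation polygon to be $+1$, hence $|a_1|=1$ and
$$
\varphi(T)=a_1T(1+h_\varphi(T)),\qquad |h_\varphi(T)|_{\eta^T_\rho}<1\text{ for }\rho\text{ close to }1.
$$
Differentiating and using the standard bound $|Th_\varphi'(T)|_{\eta^T_\rho}\le|h_\varphi(T)|_{\eta^T_\rho}$ yields $\varphi'(T)=a_1\cdot(1+\delta_\varphi(T))$ with $|\delta_\varphi|_{\eta^T_\rho}<1$ near $\eta^T_1$, and analogously $\psi'(S)=b_1\cdot(1+\delta_\psi(S))$. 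By the chain rule
$$
\frac{dS'}{dT'}=\frac{\psi'(S)}{\varphi'(T)}\cdot\frac{dS}{dT},
$$
and substituting $T=(T'/a_1)\cdot(1+\text{small})$ extracted from $\varphi^{-1}$ converts $T^{\sigma_{f,(T,S)}}$ into $(T')^{\sigma_{f,(T,S)}}$ times a unit close to a constant on a neighborhood of $\eta^{T'}_1$. Comparison with the normal form defining $\sigma_{f,(T',S')}$ then yields $\sigma_{f,(T,S)}=\sigma_{f,(T',S')}$.

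For the composition formula one applies the chain rule $dU/dT=(dU/dS)\cdot(dS/dT)$, substitutes the normal forms
$$
\frac{dU}{dS}=\eps_{g,(S,U)}\cdot S^{\sigma_{g,(S,U)}}\cdot(1+h_g(S)),\qquad\frac{dS}{dT}=\eps_{f,(T,S)}\cdot T^{\sigma_{f,(T,S)}}\cdot(1+h_f(T)),
$$
and expresses the power $S^{\sigma_{g,(S,U)}}$ in terms of $T$. Since $T$ and $S$ are aligned, the valuation polygon of $S=f(T)=\sum a_iT^i$ has a unique slope equal to $\deg(f)$, so $|a_{\deg(f)}|=1$ and $S=a_{\deg(f)}T^{\deg(f)}(1+h(T))$ with $|h|_{\eta^T_\rho}<1$ for $\rho$ close to $1$. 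Raising to the integer exponent $\sigma_{g,(S,U)}$ produces $S^{\sigma_{g,(S,U)}}=a_{\deg(f)}^{\sigma_{g,(S,U)}}\cdot T^{\deg(f)\cdot\sigma_{g,(S,U)}}\cdot(1+\text{small})$, whence the combined exponent of $T$ in $dU/dT$ reads off as $\deg(f)\cdot\sigma_{g,(S,U)}+\sigma_{f,(T,S)}$, which upon comparison with the normal form of $dU/dT$ in $(T,U)$-coordinates gives the claimed formula.

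The only delicate point is the bookkeeping of the various $(1+\text{small})$ factors, to confirm that they do not alter the dominant $T$-power; this reduces to the standard estimate $|h'|_{\eta^T_\rho}\le|h|_{\eta^T_\rho}/\rho$ for analytic functions on an annulus, together with the observation that $(1+h)^n-1$ remains small whenever $h$ is, both routine for $\rho$ sufficiently close to $1$. Since the statement already appears as \cite[Lemma 4]{BojRH}, the argument above is essentially a repackaging of the proof given there.
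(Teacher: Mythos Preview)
The paper does not supply its own proof of this lemma; it simply cites \cite[Lemma 4]{BojRH}. Your argument is a correct reconstruction of the standard computation: extract the dominant monomial of the derivative via the valuation polygon, and track it through the chain rule. One cosmetic point: you repeatedly write ``for $\rho$ close to $1$'', but in fact the decompositions $\varphi(T)=a_1T(1+h_\varphi(T))$ and $S=a_{\deg(f)}T^{\deg(f)}(1+h(T))$ with $|h|_{\eta^T_\rho}<1$ hold for \emph{all} $\rho\in(r,1)$, since an aligned isomorphism (respectively, a finite \'etale morphism) of open annuli has a valuation polygon with a single slope over the entire interval; this is also what the paper's definition of $\sigma$ requires. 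This does not affect the argument, as $\sigma$ is a single integer and is already determined by the behaviour on any subannulus.
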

We will write $\sigma(f)$ or $\sigma$ instead of $\sigma_{f,(T,S)}$ if the aligned coordinates $T$ and $S$ and $f$ are clear from the context.

If now $h$ is a real function defined on the interval $(r,1)$ we put
$$
\partial h:=\lim_{\rho\to 1}\frac{d\log h(\rho)}{d\log \rho},
$$
provided that the righthand side exists.

\begin{lemma}\label{lem: partial exists 1}
 Keep the setting $f:A\to A'$ and notation as above and assume that $T$ and $S$ are aligned. Suppose further that the profile $\pf_{f,\eta^t_{\rho}}$ has constant characteristic $n+1$, $n\geq0$, independent of $\rho$, for $\rho\in (r,1)$. Let $0<b_1(\rho)<\dots<b_n(\rho)<1$ be its break-points and $1=p^{\alpha_0}<p^{\alpha_1}<\dots<p^{\alpha_n}$ be its local degrees. Then 
 \begin{equation}\label{eq: sigma and breaks}
 |\eps(f)|\cdot \rho^{\sigma(f)-d+1}=\prod_{i=1}^{n}\big(b_{i}(\rho)\big)^{p^{\alpha_{i}}-p^{\alpha_{i-1}}},
 \end{equation}
 where in case $n=0$ we take the right-hand side to be 1. 
 Furthermore, $\partial b_i$ exists and
 \begin{equation}\label{eq: partial sigma and breaks}
 \sigma(f)=\sum_{i=1}^{n}\big(p^{\alpha_{i}}-p^{\alpha_{i-1}}\big)\cdot\big(\partial b_{i}-1\big)+d-p^{\alpha_{n}}.
 \end{equation}
\end{lemma}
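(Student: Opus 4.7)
My plan is to prove both formulas at once by computing the leading (smallest-degree) local coefficient $a_1$ of the profile $\pf_{f,\eta^T_\rho}$ in two different ways and equating them. On one hand, Lemma~\ref{lem: lambda prop}~(5) expresses $a_1$ as a product of powers of the break-points $b_j(\rho)$; on the other hand, a Taylor expansion of $f$ around a $k$-point $a$ with $|a|=\rho$ will express $a_1$ in terms of $|f_{[1]}(a)|$, which is exactly $|\eps|\rho^\sigma$ by the defining relation $\tfrac{dS}{dT}=\eps T^\sigma(1+h)$. Equating the two expressions yields \eqref{eq: sigma and breaks}, from which \eqref{eq: partial sigma and breaks} will follow by logarithmic differentiation in $\log\rho$ together with the telescoping identity $\sum_{j=1}^n (p^{\alpha_j}-p^{\alpha_{j-1}})=p^{\alpha_n}-1$.

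For the Taylor-expansion side, I will fix $\rho\in(r,1)$ close enough to $1$ and pick $a\in k$ with $|a|=\rho$, so that the disc $D:=D(a,\rho^-)$ is attached to $\eta^T_\rho$. Expanding around $T=a$ gives $S-f(a)=\sum_{i\geq 1}f_{[i]}(a)(T-a)^i$, and hence $f(D)=D(f(a),{\rho'}^-)$ with $\rho'=\max_i|f_{[i]}(a)|\rho^i$. Since $T$ and $S$ are aligned and both annuli have outer radius $1$, the leading Laurent coefficient $a_d$ of $S$ satisfies $|a_d|=1$ (because $|S|_{\eta^T_\rho}=|a_d|\rho^d\to|a_d|$ as $\rho\to 1$, and this limit is forced to equal $1$), which gives $|f_{[d]}|_{\eta^T_\rho}=1$ and therefore $\rho'=\rho^d$ for $\rho$ sufficiently close to $1$. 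After rescaling to unit discs via $\wtilde T:=(T-a)/\rho$ and $\wtilde S:=(S-f(a))/\rho^d$, the profile becomes $u\mapsto\max_i|\wtilde f_i|u^i$ with $|\wtilde f_i|=|f_{[i]}(a)|\rho^{i-d}$; since the first local degree is $p^{\alpha_0}=1$, the first local coefficient is $a_1=|\wtilde f_1|=|\eps|\rho^{\sigma+1-d}$. Equating with $a_1=\prod_{j=1}^n b_j(\rho)^{p^{\alpha_j}-p^{\alpha_{j-1}}}$ (which is Lemma~\ref{lem: lambda prop}~(5) using $b_{n+1}=1$) yields \eqref{eq: sigma and breaks}.

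To pass to \eqref{eq: partial sigma and breaks}, I will show that each $b_j(\rho)$ is a monomial function of $\rho$, so that $\partial b_j$ exists. Intersecting successive dominating terms gives
\[
b_j(\rho)^{p^{\alpha_j}-p^{\alpha_{j-1}}}=\frac{|f_{[p^{\alpha_{j-1}}]}(a)|}{|f_{[p^{\alpha_j}]}(a)|}\cdot\rho^{p^{\alpha_{j-1}}-p^{\alpha_j}},
\]
so it suffices to prove that each coefficient function $f_{[p^{\alpha_i}]}$ has purely monomial norm along the skeleton of $A$. By Lemma~\ref{lem: n-radial properties}~(1) applied to the restriction of $f$ to any disc attached to a point of the skeleton (which is weakly $n$-radial with dominating terms $p^{\alpha_0},\dots,p^{\alpha_n}$), the functions $f_{[p^{\alpha_i}]}$ are invertible on every such disc, hence invertible on the whole annulus $A$; their norms along the skeleton are therefore purely monomial in $\rho$. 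Then applying $\tfrac{d}{d\log\rho}$ to the logarithm of \eqref{eq: sigma and breaks} gives $\sigma-d+1=\sum_{j=1}^n(p^{\alpha_j}-p^{\alpha_{j-1}})\partial b_j$, which rearranges via the telescoping identity into the form of \eqref{eq: partial sigma and breaks}.

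The main subtlety is the identification of the first local coefficient $a_1$ of the profile with the rescaled linear coefficient $|\wtilde f_1|=|f_{[1]}(a)|\rho^{1-d}$: this rests on the alignment hypothesis (which forces $|a_d|=1$ and therefore $\rho'=\rho^d$), on $f$ being \'etale (so that $|\wtilde f_1|\ne 0$ and $p^{\alpha_0}=1$ is genuinely the first dominating degree of the profile), and on the weak-$n$-radial behaviour, which ensures that the dominating coefficient functions $f_{[p^{\alpha_i}]}$ are invertible across the whole annulus and hence that the monomiality of each $b_j(\rho)$ is global in $\rho\in(r,1)$ rather than merely piecewise.
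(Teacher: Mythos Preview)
Your argument for \eqref{eq: sigma and breaks} is correct and genuinely different from the paper's. The paper does not compute directly: it invokes \cite[Section 4.2.4]{CTT14} to identify $|\eps(f)|\rho^{\sigma(f)-d+1}$ with the different of $\sH(\eta^T_\rho)/\sH(\eta^S_{\rho^d})$, and then \cite[Corollary 4.4.8]{TeMu} together with Lemma~\ref{lem: lambda prop} to identify the different with the product of break-point powers. Your route via the first local coefficient $a_1$ is self-contained and more elementary: the identification $|\wtilde f_1|=|f_{[1]}(a)|\rho^{1-d}=|\eps|\rho^{\sigma+1-d}$ uses only the definition of $\eps,\sigma$ and the alignment $|a_d|=1$, while $a_1=\prod_j b_j(\rho)^{p^{\alpha_j}-p^{\alpha_{j-1}}}$ is exactly Lemma~\ref{lem: lambda prop}(5). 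One small point: you should say that $a$ is chosen so that $D(a,\rho^-)$ lies in the complement of a radializing skeleton containing the annulus skeleton (Theorem~\ref{thm: Temkin's thm}(1)), so that $\pf_{f_{|D}}$ really equals $\pf_{f,\eta^T_\rho}$; this holds for all but finitely many residue discs at each $\eta^T_\rho$, which is enough.

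Your argument for the existence of $\partial b_j$ has a gap. Lemma~\ref{lem: n-radial properties}(1) is stated for $n$-radial morphisms, not weakly $n$-radial ones, and more importantly the hypothesis that $\pf_{f,\eta^T_\rho}$ has constant characteristic $n+1$ is a statement about the Herbrand function along the skeleton---it does not say that $f_{|D}$ is $n$-radial for \emph{every} residue disc $D$. On the finitely many ``bad'' discs meeting the extra branches of a radializing skeleton, $f_{|D}$ need not be radial and $f_{[p^{\alpha_i}]}$ may well have zeros there; so invertibility of $f_{[p^{\alpha_i}]}$ on the whole annulus is not established. The paper takes a different route here: it applies the global factorization Theorem~\ref{thm: global fact} to write $f=f_{n+1}\circ\cdots\circ f_1$ with each $f_i$ a finite morphism of annuli having a single break-point, and then applies \eqref{eq: sigma and breaks} to each $f_i$ separately, obtaining $|\eps(f_i)|(\rho^{p^{\alpha_{i-1}}})^{\sigma(f_i)-p^{\alpha_i-\alpha_{i-1}}+1}=b_i(\rho)^{p^{\alpha_i}-p^{\alpha_{i-1}}}$, which exhibits each $b_i(\rho)$ individually as a monomial in $\rho$. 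Your final step---logarithmic differentiation of \eqref{eq: sigma and breaks} plus the telescoping identity $\sum_j(p^{\alpha_j}-p^{\alpha_{j-1}})=p^{\alpha_n}-1$---is the same as the paper's.
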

\begin{proof}
It follows from \cite[Section 4.2.4.]{CTT14} that the left-hand side of \eqref{eq: sigma and breaks} is equal to the different of the field extension $\sH(\eta^T_\rho)/\sH(\eta^S_{\rho^d})$ (as defined in \lc). Lemma 4.2.2. in \lc implies that this different is 1 if and only if $f$ is residually separable at $\eta^T_\rho$, that is, if and only if $n=0$. In the other case, \cite[Corollary 4.4.8.]{TeMu} and Lemma \ref{lem: lambda prop} imply that this different is equal to the right-hand side of \eqref{eq: sigma and breaks}.

Our morphism $f$ satisfies the conditions of Theorem \ref{thm: global fact}, so there exists annuli $A^{T_i}(0;r_i,1)$, $i=1,\dots,n+1$ and morphisms $f_{i}:A^{T_i}(0;r_i,1)\to A^{T_{i+1}}(0;r_{i+1},1)$, for $i=1,\dots,n$ and $f_{n+1}:A^{T_{n+1}}(0;r_{n+1},1)\to A^S(0;r^d,1)$ (we disregard $f_{n+1}$ if $f$ is residually purely inseparable at each point of the skeleton of $A$) so that $\big(A^{T_i}(0;r_i,1),f_i\big)_{i=1,\dots,n}$ induces a canonical factorization of $f$ at each $\eta^T_{\rho}$, $\rho\in (r,1)$, and so that the pairs of coordinates $(T_i,T_{i+1})$, $i=1,\dots,n$ and $(T_{n+1},S)$ are aligned. Theorem \ref{thm: local fact} also implies that: 1) the degree of $f_i$ is $p^{\alpha_i-\alpha_{i-1}}$, 2) for a $j=1,\dots,n-1$ and $\rho\in (r,1)$ we have $f_j\circ\dots\circ f_1(\eta^T_\rho)=\eta^{T_{j+1}}_{\rho^{p^{\alpha_j}}}$ and 3) the break-point of $f_{j+1}$ at $\eta^{T_{j+1}}_{\rho^{\alpha_i}}$ is $b_{f_{i+1}}(\rho^{p^{\alpha_j}})=b_{i+1}^{p^{\alpha_i}}(\rho)$.

Equation \eqref{eq: sigma and breaks} applied to the morphism $f_i$ gives
$$
\eps(f_i)\cdot\big(\rho^{p^{\alpha_i}}\big)^{\sigma(f_i)-p^{\alpha_i-\alpha_{i-1}}+1}=b_{f_i}(\rho^{p^{\alpha_{i-1}}})^{p^{\alpha_i-\alpha_{i-1}}-1}=\big(b^{p^{\alpha_{i}}}_{i+1}(\rho)\big)^{p^{\alpha_i-\alpha_{i-1}}-1}=\big(b_{i}(\rho)\big)^{p^{\alpha_i}-p^{\alpha_{i-1}}}.
$$
Consequently, $\partial b_i$ exists (and is equal to $\frac{1}{p^{\alpha_i}-p^{\alpha_{i-1}}}\cdot(\sigma(f_i)-p^{\alpha_i-\alpha_{i-1}}+1)$).

Then, \eqref{eq: sigma and breaks} implies that 
$$
\sigma(f)-d+1=\sum_{i=1}^n(p^{\alpha_i}-p^{\alpha_{i-1}})\cdot\partial b_i
$$
which implies \eqref{eq: partial sigma and breaks} once we note that
$$
d-1=d-p^{\alpha_n}+\sum_{i=1}^n(p^{\alpha_i}-p^{\alpha_{i-1}}).
$$
\end{proof}

\subsection{Harmonicity properties}
\subsubsection{} 
Let $f:Y\to X$ be a quasi-finite morphism of quasi-smooth $k$-analytic curves. Let $y\in Y$ be a type 2 point in the interior of $Y$, and let $x=f(y)$. We denote by $\deg(f,y)$ be the local degree of $f$ at $y$. It follows from semistable reduction theorem that for a small enough neighborhood $U_y\in Y$, $U_y\setminus\{y\}$ is a disjoint union of open unit discs and (finitely many) open annuli, which are in correspondence with the tangent space $T_yY$ (see Section \ref{sec: tangent space}). Let us denote by $A_{\vt}:=A^{T_{\vt}}(0;r_{\vt},1)$ an open annulus contained in $U_y$ and corresponding to the tangent point $\vt$, and where the coordinate $T_\vt$ is chosen in such a way that the points $\eta_{\vt,\rho}:=\eta^{T_{\vt}}_\rho$ converge to $y$, when $\rho\to 1$ (here and elsewhere, $\eta_{\vt,\rho}$ is the unique point on the skeleton of $A_{\vt}$ of radius $\rho$).

For each $\vt\in T_yY$ and small enough annulus $A_\vt\in \vt$ the restriction $f$ to $A_\vt$ induces a finite \'etale morphism of open annuli, that is, there is an annulus $A_\vv\in T_xX$ such that $f_{\vt}:=f_{|A_\vt}:A_\vt\to A_\vv$ is finite \'etale. If we choose coordinate $T_\vt$ and $S_\vv$ on $A_\vt$ and $A_\vv$ identifying them with $A^{T_\vt}(0;r_\vt,1)$ and $A^{S_\vv}(0;r_\vv,1)$, respectively, and so that the pair $(T_\vt,S_\vv)$ is aligned and the points $\eta^{T_\vt}_\rho$ converge to $y$ as $\rho\to 1$ (hence, $\eta^{S_\vv}_\rho \to x$ as $\rho \to 1$), having in mind Lemma \ref{lem: sigma prop}, we put
$$
\sigma_{\vt}(f):=\sigma_{f,T_\vt,S_\vv)}\quad \text{and}\quad d_{\vt}:=\deg(f_\vt).
$$
Then, we have the following local Riemann-Hurwitz formula.
\begin{thm}(See \cite[Theorems 7 and 8 ]{BojRH}, \cite[Theorem 4.5.4.]{CTT14})\label{thm: RH}
 $$
 \deg(f_y)(2-2\cdot g(x))-(2-2\cdot g(y))=\sum_{\vt\in T_yY}\sigma_{\vt}(f).
 $$
\end{thm}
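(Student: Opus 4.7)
The plan is to deduce the identity from the classical Riemann--Hurwitz formula applied to the induced morphism $\wtilde{f}_y:\fC_y\to\fC_x$ of smooth projective $\wtilde{k}$-curves, combined with a direction-by-direction identification of the local contributions of the different divisor with the radial invariants $\sigma_{\vt}(f)$.

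First, since $y$ is a type $2$ point in the interior of $Y$, equality \eqref{eq: sep insep} yields $\deg(f,y)=[\wtilde{\sH(y)}:\wtilde{\sH(x)}]=\deg(\wtilde{f}_y)$, so $\wtilde{f}_y$ is a finite morphism of degree $\deg(f,y)$ between smooth projective $\wtilde{k}$-curves of genera $g(y)$ and $g(x)$. Via the bijection of Section \ref{sec: tangent space} between $T_yY$ and $\fC_y(\wtilde{k})$, each $\vt\in T_yY$ corresponds to a unique closed point $P_\vt\in\fC_y$, and the ramification index of $\wtilde{f}_y$ at $P_\vt$ equals the degree $d_\vt$ of the restricted morphism $f_\vt:A_\vt\to A_\vv$ with $\vv=f(\vt)$. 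The generalized Riemann--Hurwitz formula for morphisms of smooth projective curves (using the different divisor $\fD$, which simultaneously records tame ramification, wild ramification, and inseparability) gives
\[
2g(y)-2\;=\;\deg(f,y)\bigl(2g(x)-2\bigr)+\deg(\fD).
\]
Rearranging reduces the theorem to proving that
\[
\deg(\fD)\;=\;\sum_{\vt\in T_yY}v_{P_\vt}(\fD)\;=\;\sum_{\vt\in T_yY}\sigma_\vt(f).
\]

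Second, to identify the local contribution $v_{P_\vt}(\fD)$ with $\sigma_\vt(f)$, I would restrict $f$ to a sufficiently small open annulus $A_\vt\to A_\vv$ representing the direction $\vt$, choose aligned coordinates $(T_\vt,S_\vv)$, and apply the analytic computation of the different recalled in Lemma \ref{lem: partial exists 1} and \cite[Section 4.2.4]{CTT14}: for $\rho\in(r_\vt,1)$, the different of the finite extension $\sH(\eta_{\vt,\rho})/\sH(\eta_{\vv,\rho^{d_\vt}})$ equals $|\eps_\vt(f)|\cdot\rho^{\sigma_\vt(f)-d_\vt+1}$. Passing to the boundary $\rho\to 1$ and comparing with the algebraic local different of $\wtilde{f}_y$ at $P_\vt$ (computed from the reduction of the differential $dT_\vt$ against $dS_\vv$) then yields $v_{P_\vt}(\fD)=\sigma_\vt(f)$, after which summation over $\vt$ closes the argument.

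The main obstacle is precisely this analytic--algebraic comparison of the two notions of different. A clean way to handle it, making full use of the machinery developed in the article, is to invoke Corollary \ref{cor: global fact} to factor the restriction of $f$ on an annular neighborhood of the germ of the skeleton at $y$ into a tower of simple radial pieces followed by a residually separable factor. Each simple piece has a single break in its profile, so its analytic $\sigma$ is computed explicitly via \eqref{eq: sigma and breaks}, and Lemma \ref{lem: sigma prop} ensures additivity of $\sigma$ along the tower; on the algebraic side, the contribution to the local different at the corresponding closed point of $\fC_y$ decomposes compatibly along the same tower, with the residually separable factor contributing $d_\vt-1$ and each wild simple factor contributing exactly its break-point exponent. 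Matching the two decompositions term-by-term verifies the equality $v_{P_\vt}(\fD)=\sigma_\vt(f)$, and substitution into the Riemann--Hurwitz formula yields the theorem.
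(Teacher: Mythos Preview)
The paper does not prove this theorem; it is quoted from \cite{BojRH} and \cite{CTT14} and then used as input for the harmonicity results of Section~4. Your outline is in the spirit of those references---reduce to Riemann--Hurwitz on the residue curves $\fC_y\to\fC_x$ and identify the local terms with the $\sigma_\vt(f)$---but two of the steps do not hold as written.

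First, the different divisor $\fD$ of $\wtilde{f}_y$ is only defined when $\wtilde{f}_y$ is generically \emph{separable}; when $\fri_{f,y}>1$ the sheaf $\Omega_{\fC_y/\fC_x}$ is not torsion and there is no ``generalized Riemann--Hurwitz formula using the different divisor which simultaneously records inseparability.'' One has to factor $\wtilde{f}_y$ through a power of Frobenius (which preserves the genus, cf.\ Remark~\ref{rmk: insep degree}) and apply Riemann--Hurwitz only to the separable quotient; the bookkeeping between $\deg(f,y)$, $\fs_{f,y}$ and $p^{\alpha_n}$ then has to be carried out explicitly, and your claimed identity $v_{P_\vt}(\fD)=\sigma_\vt(f)$ has no meaning as stated in the inseparable case.

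Second, even for the residually separable factor your assertion that it ``contributes $d_\vt-1$'' on the algebraic side is only the tame bound: if the local extension $\widehat{\cO}_{\fC_y,P_\vt}/\widehat{\cO}_{\fC_x,Q_\vt}$ is wildly ramified the different exponent is strictly larger than $e_\vt-1$. On the analytic side this is exactly the phenomenon that break-points of the profile along $A_\vt$ may converge to $1$ (the set $I_{\vt,0}$ in Theorem~\ref{thm: loc harmonicity}), and then \eqref{eq: partial sigma and breaks} gives $\sigma_\vt(f)\neq d_\vt-1$. So the ``term-by-term matching'' you describe does not close without a genuine comparison of the reduction of $dS_\vv/dT_\vt$ with the algebraic different, which is precisely the content of the cited proofs. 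Finally, invoking Corollary~\ref{cor: global fact} here is logically allowed (Section~3 does not use Theorem~\ref{thm: RH}) but inverts the architecture of the paper: Theorem~\ref{thm: RH} is an imported prerequisite, and the factorization machinery is built to \emph{refine} it, not to derive it.
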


Let now $f_{\vt}:(r_{\vt},1)\to \R_{>0}$ be a function. We set 
$$
\partial_{\vt}f_{\vt}:=\lim_{\rho\to 1}\frac{d\log f_{\vt}(\rho)}{d\log \rho},
$$
provided that the right-hand side exists.

\subsubsection{}
We keep the setting $f:Y\to X$, notation and assumptions as before. Let $\pf_{f,y}$, as usual, denote the profile of $f$ at $y$ and suppose it is of characteristic $n+1$, $n\geq 0$. Let $0<b_1<\dots<b_n<1$ be the break-points of $\pf_{f,y}$. By continuity of the profile function, for each $\vt\in T_yY$ and small enough open annulus $A_{\vt}=A^{T_{\vt}}(0;r_{\vt},1)$ in $\vt$, the profile $\pf_{f,\eta_{\vt,\rho}}$, $\rho\in (r_{\vt},1)$ will have a fixed characteristic, say $n(\vt)+1$ and we have $n(\vt)\geq n$. Let $0<b_{\vt,1}(\rho)<\dots<b_{\vt,n(\vt)}(\rho)<1$ be the break-points of $\pf_{f,\eta_{\vt,\rho}}$, and $1=p^{\alpha_{\vt,0}}<\dots<p^{\alpha_{\vt,n(\vt)}}$ be the corresponding local degrees. Let now, for each $i=1,\dots,n$, $I_{\vt,i}$ be the set of those numbers $j$ for which $b_{\vt,j}(\rho)$ converges to $b_i$ as $\rho\to 1$ and let $I_{\vt,0}$ be the set of those numbers $j$ for which $b_{\vt,j}(\rho)$ converges to 1, as $\rho\to 1$.  We note that if $n=0$ these sets as well may be empty.

\begin{lemma}\label{lem: partial exists 2}
 For each $\vt\in T_yY$, $i=0,\dots,n$, such that $I_{\vt,i}\neq \emptyset$ and $j\in I_{\vt,i}$, $\partial_{\vt} b_{\vt,j}$ exists and for all but at most finitely many $\vt$, we have
 \begin{equation}\label{eq: partial b}
 \partial_{\vt} b_{\vt,j}-1=0.
 \end{equation}
\end{lemma}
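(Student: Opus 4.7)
The plan is to reduce to studying the restrictions $f_\vt := f|_{A_\vt}$, apply Lemma \ref{lem: partial exists 1} to each of them, and then invoke the local Riemann--Hurwitz formula (Theorem \ref{thm: RH}) to confine the non-generic behavior to finitely many tangent directions.

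For the existence of $\partial_\vt b_{\vt, j}$, I would fix $\vt \in T_y Y$ and shrink $A_\vt$ so that, by parts (2) and (3) of Theorem \ref{thm: Temkin's thm}, both the characteristic $n(\vt) + 1$ and the local degrees $p^{\alpha_{\vt, j}}$ of $\pf_{f, \eta_{\vt, \rho}}$ are independent of $\rho \in (r_\vt, 1)$. Then $f_\vt$ satisfies the hypotheses of Lemma \ref{lem: partial exists 1}, whose proof (via the canonical factorization of $f_\vt$ into simple morphisms of annuli from Theorem \ref{thm: global fact} and equation (\ref{eq: sigma and breaks}) applied to each simple factor) shows that each $b_{\vt, j}(\rho)$ is monomial in $\rho$ near $\rho = 1$, yielding both the existence of $\partial_\vt b_{\vt, j}$ and the formula (\ref{eq: partial sigma and breaks}).

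For the main assertion, Theorem \ref{thm: RH} gives
\[
\sum_{\vt \in T_y Y} \sigma_\vt(f) \;=\; \deg(f, y)(2 - 2 g(x)) - (2 - 2 g(y)),
\]
a finite integer. Each $\sigma_\vt(f)$ is a non-negative integer, being the exponent of the different of the generic local extension $\sH(\eta_{\vt, \rho})/\sH(f(\eta_{\vt, \rho}))$ (cf.\ \cite[Section 4.2.4]{CTT14}). Hence $\sigma_\vt(f) = 0$ for all but finitely many $\vt$. For such generic $\vt$, the canonical factorization of $f_\vt$ into simple factors $f_1, \dots, f_{n(\vt)}$ combined with the additivity formula of Lemma \ref{lem: sigma prop} expresses $\sigma_\vt(f)$ as a non-negative linear combination of the $\sigma(f_l)$; its vanishing therefore forces $\sigma(f_l) = 0$ for every $l$. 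Applying (\ref{eq: partial sigma and breaks}) to the simple factor $f_l$, whose degree coincides with its unique local degree $p^{\alpha_{\vt, j} - \alpha_{\vt, j-1}}$, gives $(p^{\alpha_{\vt, j} - \alpha_{\vt, j-1}} - 1)(\partial_\vt b_{\vt, j} - 1) = 0$, whence $\partial_\vt b_{\vt, j} - 1 = 0$.

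The main technical obstacle is the non-negativity of $\sigma(f_l)$ for each simple factor $f_l$, which should follow from its interpretation as the different-exponent of an induced extension of completed residue fields. A secondary point is handling the case $j \in I_{\vt, 0}$, where $b_{\vt, j}(\rho) \to 1$: the same simple-factor analysis after shrinking $A_\vt$ produces the needed monomial asymptotics here as well.
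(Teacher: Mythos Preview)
Your existence argument via Lemma~\ref{lem: partial exists 1} is fine and coincides with the paper's. The second half, however, diverges from the paper and has a genuine gap.

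The paper does not use Riemann--Hurwitz at all here. Instead it argues directly: choose a radializing skeleton $\Gamma$ of $f$ passing through $y$ (Theorem~\ref{thm: Temkin's thm}(1)); then for all but finitely many $\vt\in T_yY$ the element of $\vt$ attached to $y$ is an open unit disc $D$ on which $f_{|D}$ is radial with profile $\pf_{f,y}$. Lemma~\ref{lem: profile change} then computes the break-points of $\pf_{f,\eta_{\vt,\rho}}$ explicitly as $b_{\vt,i}(\rho)=\rho^{-1}b_i$, from which $\partial_\vt b_{\vt,i}$ is read off directly. No appeal to $\sigma$ or to Theorem~\ref{thm: RH} is needed.

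Your route through Theorem~\ref{thm: RH} breaks at the claim that each $\sigma_\vt(f)$ is a non-negative integer ``being the exponent of the different.'' That is not what $\sigma$ is: equation~\eqref{eq: sigma and breaks} says the different at $\eta_{\vt,\rho}$ equals $|\eps|\cdot\rho^{\sigma-d+1}$, so $\sigma-d+1$ is the \emph{slope} of the log-different along the skeleton, not a different-exponent, and it need not be non-negative. Concretely, on $A^T(0;r,1)$ with $r>|p|^{1/(p+1)}$, the aligned morphism $S=T^p+c\,T^{-1}$ with $|p|<|c|<r^{p+1}$ has $dS/dT$ dominated by $-c\,T^{-2}$ throughout, so $\sigma=-2<0$. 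Such annulus morphisms do arise as $f_\vt$ for finite morphisms of curves at a type~2 point. The same objection applies to your next step: even granting $\sigma_\vt(f)=0$, the additivity formula of Lemma~\ref{lem: sigma prop} only gives a positive linear combination of the $\sigma(f_l)$ equal to zero, and without $\sigma(f_l)\ge 0$ you cannot conclude each $\sigma(f_l)=0$.

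The vanishing of $\sigma_\vt(f)$ for cofinitely many $\vt$ is true, but it is part of the \emph{proof} of Theorem~\ref{thm: RH} (established precisely via the radiality of $f_{|D}$ for generic $\vt$), not a consequence of the formula together with a non-negativity you do not have. If you want to salvage the $\sigma$-route, one honest fix is to apply Theorem~\ref{thm: RH} separately to each simple factor $f_i:U_i\to U_{i+1}$ at $y_i$; since $f_{i-1}\circ\dots\circ f_1$ is residually purely inseparable at $y$, the induced map $T_yY\to T_{y_i}U_i$ is a bijection, and you can then intersect the cofinite sets on which each $\sigma_{\vt_i}(f_i)=0$. But this is considerably heavier than the paper's two-line computation via Lemma~\ref{lem: profile change}.
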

\begin{proof}
 That $\partial_{\vt}b_{\vt,j}$ exists follows from Lemma \ref{lem: partial exists 1}. Let $\vt\in T_yY$ be such that if $D\in \vt$ is an open disc in $Y$ attahed to $y$, the restriction $f_{|D}$ is radial morphism of open (unit) discs. We note that this will be the case for all $\vt\in T_yY$ except for at most finitely many. Also, the profile of the morphism $f_{|D}$ coincide with $\pf_{f,y}$. 
 
 Let $A^{T_\vt}(0;r_{\vt},1)$ be an open annulus in $\vt$ so that $r_\vt>b_n$, and let $\rho\in (r_\vt,1)$. Then, Lemma \ref{lem: profile change} implies that the breakpoints of the profile of $f$ at $\eta^{T_\vt}_\rho$ are $b_{\vt,i}(\rho)=\rho^{-1}b_i$, $i=1,\dots,n$. The claim follows.
\end{proof}

The main results of this section are the following formulas which refine Riemann-Hurwitz formula \ref{thm: RH} and describe harmonicity properties of morphisms at type 2 points.

\begin{thm}\label{thm: loc harmonicity}
 Keep the notation as above and let $i\in\{0,1,\dots,n\}$. 
 \begin{enumerate}
\item If $i\neq 0$, then
$$
 \big(p^{\alpha_{i}}-p^{\alpha_{i-1}}\big)\cdot\big(2-2\cdot g(y)\big)=\sum_{\vt\in T_yY}\Big(\sum_{j\in I_{\vt,i}}(p^{\alpha_{\vt,j}}-p^{\alpha_{\vt,j-1}})\cdot\big(\partial_{\vt}b_{\vt,j}-1\big)\Big).
 $$
 \item If $i=0$, then
 $$
 \deg(f,y)\cdot\big(2-2\cdot g(x)\big)-p^{\alpha_n}\cdot\big(2-2\cdot g(y)\big)=\sum_{\vt\in T_yY}\Big(\sum_{j\in I_{\vt,0}}(p^{\alpha_{\vt,j}}-p^{\alpha_{\vt,j-1}})\cdot\big(\partial_{\vt}b_{\vt,j}-1\big)+d_{\vt}-p^{\alpha_{\vt,n(\vt)}}\Big).
 $$
 \end{enumerate}
\end{thm}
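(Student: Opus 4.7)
The plan is to reduce both formulas to the local Riemann--Hurwitz formula (Theorem \ref{thm: RH}) applied factor by factor in a canonical local factorization of $f$ at $y$, translating each $\sigma$-invariant into break-point data via Lemma \ref{lem: partial exists 1}. First I would invoke Theorem \ref{thm: local fact} to obtain, on a neighborhood of $y$, a factorization $f=f_{n+1}\circ f_n\circ\cdots\circ f_1$ where $f_i$ for $i\le n$ is residually purely inseparable with simple profile at $y_i:=f_{i-1}\circ\cdots\circ f_1(y)$ of degree $p^{\alpha_i-\alpha_{i-1}}$, and $f_{n+1}$ is residually separable at $y_{n+1}$. By Remark \ref{rmk: insep degree}(2) one has $g(y_i)=g(y)$ for every $i\le n+1$; the residually purely inseparable factors $f_1,\dots,f_n$ induce bijections $\fC_{y_i}(\wtilde k)\to \fC_{y_{i+1}}(\wtilde k)$, hence canonical identifications $T_{y_i}U_i\cong T_yY$, and I will denote by $\vt_i$ the tangent direction at $y_i$ corresponding to $\vt\in T_yY$.

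Next I would apply Theorem \ref{thm: RH} separately to each $f_i$ at $y_i$. For $i\le n$ this yields $(p^{\alpha_i-\alpha_{i-1}}-1)(2-2g(y))=\sum_{\vt\in T_yY}\sigma_{\vt_i}(f_i)$, and for $i=n+1$ it yields $\fs_{f,y}(2-2g(x))-(2-2g(y))=\sum_{\vt}\sigma_{\vt_{n+1}}(f_{n+1})$. By shrinking the annuli sufficiently, or by refining the canonical factorization via Theorem \ref{thm: global fact} so that each $f_i$ becomes uniformly simple along the induced skeleton, I may assume that $f_i$ (for $i\le n$) is residually purely inseparable along the skeleton of $A_{\vt_i}$; then the correction term $d_{\vt_i}(f_i)-p^{\beta_{i,\mathrm{last}}}$ of Lemma \ref{lem: partial exists 1} vanishes and $\sigma_{\vt_i}(f_i)=\sum_l (p^{\beta_{i,l}}-p^{\beta_{i,l-1}})(\partial_{\vt_i}\tilde b_{i,l}-1)$, where $\tilde b_{i,l}(\rho_i)$ are the break-points of $\pf_{f_i,\eta_{\vt_i,\rho_i}}$.

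The main technical step, and the expected obstacle, is to identify this break-point data with that of $\pf_{f,\eta_{\vt,\rho}}$ indexed by $I_{\vt,i}$. Writing $F_{i-1}=f_{i-1}\circ\cdots\circ f_1$ and $\rho_i=\rho^{p^{\alpha_{i-1}}}$ (the multiplicative scalar is absorbed upon taking the limit), the composition identity $\pf_{f,\eta_{\vt,\rho}}=\pf_{f_{n+1}}\circ\cdots\circ\pf_{f_1}$ together with the fact that $b_i$ lies in the topmost monomial segment of $\pf_{F_{i-1},y}$ on which $\pf_{F_{i-1},y}(s)=s^{p^{\alpha_{i-1}}}$ (the leading coefficient is $1$ because $\pf_{F_{i-1}}(1)=1$) shows, for $\rho$ close enough to $1$, that the break-points of $\pf_{f,\eta_{\vt,\rho}}$ in $I_{\vt,i}$ correspond bijectively to break-points of $\pf_{f_i,\eta_{\vt_i,\rho_i}}$ via $b_{\vt,j}(\rho)=\tilde b_{i,l}(\rho_i)^{1/p^{\alpha_{i-1}}}$. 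From this one reads off $\partial_\vt b_{\vt,j}=\partial_{\vt_i}\tilde b_{i,l}$, while the chain rule for local degrees of a composition of profiles — using that $\pf_{F_{k-1},y}(b_i)$ lies strictly above the unique break of $\pf_{f_k,y_k}$ when $k<i$ and strictly below it when $k>i$ — gives $p^{\alpha_{\vt,j}}-p^{\alpha_{\vt,j-1}}=p^{\alpha_{i-1}}(p^{\beta_{i,l}}-p^{\beta_{i,l-1}})$. One also verifies that no break of $\pf_{f_i,\eta_{\vt_i,\rho_i}}$ can escape to $0$ (forbidden by the condition $\alpha_0=0$ in the definition of $\Lambda_p$) or to $1$ (the top local degree along the skeleton is constant), so the correspondence exhausts the break-point data. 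Multiplying the $i$-th Riemann--Hurwitz identity by $p^{\alpha_{i-1}}$ and substituting yields formula (1).

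For formula (2) the same procedure is applied to $f_{n+1}$, using $\deg(f,y)=\fs_{f,y}\cdot p^{\alpha_n}$ to rewrite the left-hand side as $p^{\alpha_n}[\fs_{f,y}(2-2g(x))-(2-2g(y))]$. Since $\pf_{f_{n+1},y_{n+1}}=\id$, every break-point of $\pf_{f_{n+1},\eta_{\vt_{n+1},\rho_{n+1}}}$ must converge to $1$, hence corresponds to an element of $I_{\vt,0}$; the analogous chain-rule computation with $\rho_{n+1}=\rho^{p^{\alpha_n}}$ gives $\partial_\vt b_{\vt,j}=\partial_{\vt_{n+1}}\tilde b_l$ and $p^{\alpha_{\vt,j}}-p^{\alpha_{\vt,j-1}}=p^{\alpha_n}(p^{\gamma_l}-p^{\gamma_{l-1}})$, while the extra summand $d_\vt-p^{\alpha_{\vt,n(\vt)}}=p^{\alpha_n}(d_{\vt_{n+1}}(f_{n+1})-p^{\gamma_{\mathrm{last}}})$ matches precisely the correction term of Lemma \ref{lem: partial exists 1} for the residually separable factor. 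Multiplying the Riemann--Hurwitz identity for $f_{n+1}$ by $p^{\alpha_n}$ and substituting concludes the proof; essentially all the real work is in the bookkeeping of the previous paragraph, the rest being formal.
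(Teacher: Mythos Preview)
Your proposal is correct and follows essentially the same route as the paper: invoke the canonical local factorization of $f$ at $y$ (Theorem \ref{thm: local fact}), apply the local Riemann--Hurwitz formula (Theorem \ref{thm: RH}) to each factor $f_i$, convert each $\sigma_{\vt_i}(f_i)$ into break-point data via Lemma \ref{lem: partial exists 1}, match those break-points with the subset $I_{\vt,i}$ of break-points of $\pf_{f,\eta_{\vt,\rho}}$ through the composition identity for profiles, and finally multiply by $p^{\alpha_{i-1}}$. The only cosmetic difference is that the paper isolates the simply ramified and residually separable base cases as separate lemmas (Lemmas \ref{lem: harmon simply ram} and \ref{lem: harmon sep}) and organizes the break-point bookkeeping as an explicit induction on $i$ by studying the tail composition $f_{\vt_n}\circ\cdots\circ f_{\vt_i}$, whereas you compare $f_i$ with $f$ directly; the content is the same.
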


\begin{lemma}\label{lem: harmon simply ram}
 Theorem \ref{thm: loc harmonicity} is true if $\sH(y)/\sH(x)$ is simply ramified. 
\end{lemma}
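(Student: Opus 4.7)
The strategy is to derive both identities from the local Riemann--Hurwitz formula (Theorem~\ref{thm: RH}) combined with Lemma~\ref{lem: partial exists 1}, once one exhibits the strong rigidity forced by the simply ramified hypothesis: for every $\vt\in T_yY$ the set $I_{\vt,0}$ is empty. Granted this structural observation, both identities become bookkeeping. I first extract the consequences of the hypothesis. Since $\sH(y)/\sH(x)$ is simply ramified, $\pf_{f,y}=\H_{\sH(y)/\sH(x)}$ has exactly one break-point and its degree equals $|\sH(y):\sH(x)|=\deg(f,y)$; so $n=1$ and $p^{\alpha_1}=p^{\alpha_n}=\deg(f,y)$. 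Lemma~\ref{lem: sep insep} then yields that $f$ is residually purely inseparable at $y$, and Remark~\ref{rmk: insep degree} gives $g(x)=g(y)$ together with $d_\vt=\deg(f,y)=p^{\alpha_1}$ for every $\vt\in T_yY$. After shrinking each $A_\vt$ I also assume that $n(\vt)$ and the local degrees $p^{\alpha_{\vt,j}}$ are constant along the skeleton of $A_\vt$.

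The main step is showing $I_{\vt,0}=\emptyset$ for every $\vt$. By continuity of $\pf_{f,\cdot}$ along skeleta (Theorem~\ref{thm: Temkin's thm}\,(2)) each sequence $b_{\vt,j}(\rho)$ converges as $\rho\to 1$. A limit equal to $0$ is excluded: if $b_{\vt,j}(\rho)\to 0$ and $b_{\vt,j+1}(\rho)\to b_*>0$, the monomial piece on the interval $(b_{\vt,j}(\rho),b_{\vt,j+1}(\rho))$ converges uniformly to a monomial on $(0,b_*)$; comparing with $\pf_{f,y}$ forces either $p^{\alpha_{\vt,j}}=p^{\alpha_0}=1$ (impossible since $\alpha_{\vt,j}\ge 1$ for $j\ge 1$) or $b_*>b_1$ (impossible since then the limit would acquire the break-point $b_1$ while monomials have none). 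Hence $\{1,\dots,n(\vt)\}=I_{\vt,1}\sqcup I_{\vt,0}$ with the $I_{\vt,1}$-indices in the first positions. Writing $m_1:=|I_{\vt,1}|$, the interval $(b_{\vt,m_1}(\rho),b_{\vt,m_1+1}(\rho))$ (setting $b_{\vt,n(\vt)+1}(\rho):=1$) limits to $(b_1,1)$, so uniform convergence of the profile forces $p^{\alpha_{\vt,m_1}}=p^{\alpha_1}$. On the other hand,
\[
p^{\alpha_{\vt,n(\vt)}}=\fri_{f,\eta_{\vt,\rho}}\leq \deg(f,\eta_{\vt,\rho})=d_\vt=p^{\alpha_1}.
\]
Combined with strict monotonicity $p^{\alpha_{\vt,m_1}}<p^{\alpha_{\vt,m_1+1}}<\cdots<p^{\alpha_{\vt,n(\vt)}}$ (valid if $m_1<n(\vt)$), this forces $n(\vt)=m_1$, hence $I_{\vt,0}=\emptyset$ and $p^{\alpha_{\vt,n(\vt)}}=d_\vt=p^{\alpha_1}$.

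With this rigidity in hand the two identities follow. For $i=0$ the LHS equals $p^{\alpha_1}\bigl[(2-2g(x))-(2-2g(y))\bigr]=0$, and every summand on the RHS vanishes since $I_{\vt,0}=\emptyset$ and $d_\vt-p^{\alpha_{\vt,n(\vt)}}=0$. For $i=1$, combining Theorem~\ref{thm: RH} with Lemma~\ref{lem: partial exists 1} gives
\[
(p^{\alpha_1}-1)(2-2g(y))=\sum_{\vt\in T_yY}\Bigl[\sum_{j=1}^{n(\vt)}(p^{\alpha_{\vt,j}}-p^{\alpha_{\vt,j-1}})(\partial_\vt b_{\vt,j}-1)+d_\vt-p^{\alpha_{\vt,n(\vt)}}\Bigr];
\]
substituting the vanishings from the previous step, together with $\{1,\dots,n(\vt)\}=I_{\vt,1}$ and $p^{\alpha_0}=1$, yields the $i=1$ identity. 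The main obstacle is the rigidity step: the squeeze $p^{\alpha_1}=p^{\alpha_{\vt,m_1}}\le p^{\alpha_{\vt,n(\vt)}}\le d_\vt=p^{\alpha_1}$ together with strict monotonicity of local degrees, which collapses $I_{\vt,0}$ to $\emptyset$; once this is in place, the rest is pure assembly.
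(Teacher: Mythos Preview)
Your proof is correct and follows essentially the same line as the paper's: both reduce the simply ramified case to the local Riemann--Hurwitz formula combined with Lemma~\ref{lem: partial exists 1}, after observing that $I_{\vt,0}=\emptyset$, $I_{\vt,1}=\{1,\dots,n(\vt)\}$, $d_\vt=p^{\alpha_{\vt,n(\vt)}}$, and $g(x)=g(y)$. The paper simply asserts these structural facts, whereas you supply a careful argument for them (the squeeze $p^{\alpha_1}=p^{\alpha_{\vt,m_1}}\le p^{\alpha_{\vt,n(\vt)}}\le d_\vt=p^{\alpha_1}$ and the exclusion of break-points limiting to $0$); you also explicitly dispatch the $i=0$ identity, which the paper leaves implicit.
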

\begin{proof}
 Indeed, in this case for each $\vt\in T_yY$ the set $I_{\vt,0}$ is empty and $I_{\vt,1}=\{1,\dots,n(\vt)\}$. Moreover, for each $\vt\in T_yY$, $d_{\vt}=p^{\alpha_{\vt,n(\vt)}}$. Since $f$ is residually purely inseparable at $y$ as well, then $g(y)=g(x)$. Applying Theorem \ref{thm: RH} and Lemma \ref{lem: partial exists 1} we obtain 
 $$
 p^{\alpha_{1}-\alpha_0}\cdot(2-2\cdot g(y))-(2-2\cdot g(y))=\sum_{\vt\in T_yY}\sum_{i=1}^{n(\vt)}(p^{\alpha_{\vt,i}}-p^{\alpha_{\vt,i-1}})\cdot(\partial_{\vt}b_{\vt,i}-1),
 $$
 which we were supposed to prove.
\end{proof}

\begin{lemma}\label{lem: harmon sep}
 Theorem \ref{thm: loc harmonicity} is true if $\sH(y)/\sH(x)$ is residually separable, that is $\pf_{f,y}=\id$. 
\end{lemma}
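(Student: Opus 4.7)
The plan is to reduce the residually separable case to a direct combination of the local Riemann--Hurwitz formula (Theorem \ref{thm: RH}) with the break-point/different formula of Lemma \ref{lem: partial exists 1}. Since $\pf_{f,y}=\id$ we have $n=0$ and $p^{\alpha_0}=1$, so only part \emph{(2)} of Theorem \ref{thm: loc harmonicity} has content, and the asserted identity reads
\begin{equation*}
\deg(f,y)\cdot(2-2g(x))-(2-2g(y))=\sum_{\vt\in T_yY}\Big(\sum_{j\in I_{\vt,0}}(p^{\alpha_{\vt,j}}-p^{\alpha_{\vt,j-1}})\cdot(\partial_{\vt}b_{\vt,j}-1)+d_{\vt}-p^{\alpha_{\vt,n(\vt)}}\Big).
\end{equation*}

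First I would check that $I_{\vt,0}=\{1,\dots,n(\vt)\}$ for every $\vt\in T_yY$, i.e.\ that no sequence of break-points $b_{\vt,j}(\rho)$ accumulates at a value $<1$. This uses the uniform convergence of $\pf_{f,\eta_{\vt,\rho}}$ to $\pf_{f,y}=\id$ as $\rho\to 1$ (Theorem \ref{thm: Temkin's thm} \emph{(2)}): if some $b_{\vt,j}(\rho)$ converged to a value $b_\ast\in[0,1)$, then the limit profile would have local degree $\geq p^{\alpha_{\vt,j}}>1$ on an interval containing a neighborhood of $b_\ast$, contradicting $\pf_{f,y}=\id$. Hence every break-point migrates to $1$ and belongs to $I_{\vt,0}$.

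Next, for each $\vt$ I would pick aligned coordinates $(T_\vt,S_\vv)$ on a sufficiently small annulus $A_\vt\in\vt$ and its image $A_\vv$, and apply Lemma \ref{lem: partial exists 1} to the finite \'etale morphism $f_\vt:A_\vt\to A_\vv$. Formula \eqref{eq: partial sigma and breaks} of that lemma yields
\begin{equation*}
\sigma_{\vt}(f)=\sum_{j=1}^{n(\vt)}\big(p^{\alpha_{\vt,j}}-p^{\alpha_{\vt,j-1}}\big)\cdot\big(\partial_{\vt}b_{\vt,j}-1\big)+d_{\vt}-p^{\alpha_{\vt,n(\vt)}}.
\end{equation*}
By the previous paragraph, this sum over $j=1,\dots,n(\vt)$ coincides with the inner bracket on the right-hand side of the formula we want.

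Finally, summing over $\vt\in T_yY$ and invoking Theorem \ref{thm: RH} gives
\begin{equation*}
\sum_{\vt\in T_yY}\sigma_{\vt}(f)=\deg(f,y)\cdot(2-2g(x))-(2-2g(y)),
\end{equation*}
which is exactly the identity of case \emph{(2)}. The one step that requires genuine care is the accumulation argument for the break-points; everything else is bookkeeping and a direct appeal to Theorem \ref{thm: RH} and Lemma \ref{lem: partial exists 1}. No separate treatment of case \emph{(1)} is needed, since the hypothesis $\pf_{f,y}=\id$ leaves no index $i\in\{1,\dots,n\}$ to consider.
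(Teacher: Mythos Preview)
Your proof is correct and follows essentially the same route as the paper: reduce to $n=0$, show that every break-point of $\pf_{f,\eta_{\vt,\rho}}$ migrates to $1$ (so $I_{\vt,0}=\{1,\dots,n(\vt)\}$), and then combine Lemma \ref{lem: partial exists 1} with Theorem \ref{thm: RH}. The only cosmetic difference is that the paper separates out the case $n(\vt)=0$ explicitly (where $I_{\vt,0}=\emptyset$ and $\sigma_{\vt}(f)=d_{\vt}-1$), whereas you absorb it into the general formula, and the paper asserts the accumulation fact without spelling out the uniform-convergence argument you give.
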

\begin{proof}
 In this case the local degree of $f$ at $y$ is $\fs_{f,y}$ and  $I_{\vt,0}$ is either empty or equal to $\{1,\dots,n(\vt)\}$. If it is empty, then $n(\vt)=0$ and the morphism is residually separable at every point $\eta_{\vt,\rho}$, $\rho\in (r_\vt,1)$. Hence the different of the extension $\sH(\eta_{\vt,\rho})/\sH(f(\eta_{\vt,\rho}))$ is 1 and \eqref{eq: sigma and breaks} and \eqref{eq: partial sigma and breaks} imply that $\sigma=d_\vt-1$. If $I_{\vt,0}=\{1,\dots,n(\vt)\}$ then by using Lemma \ref{lem: partial exists 1}, Theorem \ref{thm: RH} becomes
 $$
 \fs_{f,y}\cdot(2-2\cdot g(x))-(2-2\cdot g(y))=\sum_{\vt\in T_yY}\left(\sum_{i\in I_{\vt,0}}\big(p^{\alpha_{\vt,i}}-p^{\alpha_{\vt,i-1}}\big)\cdot\big(\partial_{\vt}b_{\vt,i}-1\big)+d_{\vt}-p^{\alpha_{\vt,n(\vt)}}\right),
 $$
which is true also in case $I_{\vt,0}=\emptyset$. The lemma follows.
 \end{proof}

\begin{proof}[Proof of Theorem \ref{thm: loc harmonicity}] 

Let us fix $\vt\in T_yY$ and let $\fU_y=(U_i,f_i)_{i=1,\dots,m}$ be a canonical factorization of $f$ over some neighborhood of $y$ in $Y$ and let us put $y_1=y$ and $y_{i+1}:=f_i(y_i)$ (we note that $m=n$ or $m=n+1$ depending on whether $f$ is residually purely inseparable or not at $y$). The restriction of $\fU_y$ over some small enough open annulus $A_\vt=A^{T_\vt}(0;r_{\vt},1)$ (oriented as usual) can be refined to a canonical factorization of $f$ over $A_{\vt}$. For a $\rho\in (r_{\vt},1)$ let $0<b_{\vt,1}(\rho)<\dots<b_{\vt,n(\vt)}(\rho)<1$ be all the break-points of $\pf_{f,\eta_{\vt,\rho}}$.  

We put $\vt_1:=\vt$ and inductively $\vt_{i+1}=f_i(\vt_i)$. Let us further fix an $i=1,\dots,m$ and consider the morphism $f_{\vt_i}$ which is the restriction of $f_i$ to a (small enough) open annulus $A_{\vt_i}=A^{T_{\vt_i}}(0;r_{\vt_i},1)$ in $\vt_i$. Then for $\rho\in (r_{\vt_i},1)$ let $0<b_{\vt_i,1}(\rho)<\dots<b_{\vt_i,n(\vt_i)}(\rho)<1$ be all the break-points of the profile $\pf_{f_i,\eta_{\vt_i,\rho}}$ and let $1=p^{\alpha_{\vt_i,0}}<\dots<p^{\alpha_{\vt_i,n(\vt_i)}}$ be the corresponding local degrees. We note that by our choice of $A_{\vt_i}$ (that is, since $A_{\vt_i}$ is chosen small enough), we may assume that $n(\vt_i)$ does not depend on $\rho\in (r_{\vt_i},1)$. 

By continuity of the profiles involved, the canonical factorization of each $f_{\vt_i}$ can be completed to a canonical factorization of $f_{\vt}$. Using this, by comparing the profiles  $\pf_{f_\vt}(\eta^{T_\vt}_\rho)$ and $\pf_{f_{\vt_1}}(\eta^{T_{\vt_1}}_\rho)$ we conclude that 
$$
n(\vt_1)=|I_{\vt,1}|
$$
and moreover 
$$
b_{\vt,l}(\rho)=b_{\vt_1,l}(\rho),\quad l=1,\dots,n(\vt_1),
$$
and the corresponding local degrees match. Consequently, 
$$
\partial_\vt b_{\vt,l}=\partial_{\vt_1} b_{\vt_i,l},\quad l=1,\dots,n(\vt_1),
$$
hence
$$
\sum_{l\in I_{\vt,1}}(p^{\alpha_{\vt,l}}-p^{\alpha_{\vt,l-1}})\cdot(\partial_\vt b_{\vt,l}-1)=p^{\alpha_0}\cdot\sum_{l=1}^{n(\vt_1)}(p^{\alpha_{\vt_1,l}}-p^{\alpha_{\vt_1,l-1}})\cdot(\partial_{\vt_1}b_{\vt_1,l}-1).
$$
Suppose for the sake of induction that for some fixed $i=2,\dots,m$  and each $j=1,\dots,i-1$ we have $n(\vt_j)=|I_{\vt,j}|$ and
\begin{equation}\label{eq: ind}
\sum_{l\in I_{\vt,j}}(p^{\alpha_{\vt,l}}-p^{\alpha_{\vt,l-1}})\cdot(\partial_\vt b_{\vt,l}-1)=p^{\alpha_{j-1}}\cdot\sum_{l=1}^{n(\vt_j)}(p^{\alpha_{\vt_j,l}}-p^{\alpha_{\vt_j,l-1}})\cdot(\partial_{\vt_j}b_{\vt_j,l}-1).
\end{equation}
We consider morphism $f_{\vt_n}\circ\dots\circ f_{\vt_i}:A_{\vt_i}\to f_{\vt_n}(A_{\vt_n})$ and study the first $|I_{\vt,i}|$ break-points of its profile at point $\eta^{T_{\vt_i}}_{\rho^{p^{\alpha_{i-1}}}}$ (which is the image of $\eta^{T_{\vt_1}}_\rho$ by a map $f_{\vt_{i-1}}\circ\dots\circ f_{\vt_1}$). On one side, by applying the same reasoning as before, we conclude that $n(\vt_i)=|I_{\vt,i}|$ and that the break-points are given by 
\begin{equation}\label{eq: br 1}
b_{\vt_i,l}(\rho^{p^{\alpha_{i-1}}}),\quad l=1,\dots,n(\vt_i).
\end{equation}
On the other side, by using that 
$$
\pf_{f_{\vt_n}\circ\dots\circ f_{\vt_i}}(\eta^{T_{\vt_i}}_{\rho^{p^{\alpha_{i-1}}}})=\pf_{f_\vt}(\eta^{T_\vt}_\rho)\circ\left(\pf_{f_{\vt_{i-1}}\circ\dots\circ f_{\vt_1}}(\eta^{T_{\vt}}_\rho)\right)^{-1},
$$
and the first part of inductive hypothesis \eqref{eq: ind} we obtain that the break-points are given by  
\begin{equation}\label{eq: br 2}
\big(b_{\vt,l}(\rho)\big)^{p^{\alpha_{i-1}}},\quad l\in I_{\vt,i},
\end{equation}
and moreover, the local degrees of the profile of $f_{\vt_n}\circ\dots\circ f_{\vt_i}$ at $\eta^{T_{\vt_i}}_{\rho^{p^{\alpha_{i-1}}}}$ are given by numbers $p^{\alpha_{\vt,l}-\alpha_{i-1}}$, for $l\in I_{\vt,i}$, as $p^{\alpha_{i-1}}$ is the degree of $\pf_{f_{\vt_{i-1}}\circ\dots\circ f_{\vt_1}}$ (because $f_{i-1}\circ\dots\circ f_1$ is purely inseparable at $y_1$, its degree $p^{\alpha_{i-1}}$ is also the degree of $f_{\vt_{i-1}}\circ\dots\circ f_{\vt_1}$, see Remark \ref{rmk: insep degree} \emph{(1)}).
By comparing \eqref{eq: br 1} and \eqref{eq: br 2} we conclude that 
$$
\sum_{l\in I_{\vt,i}}(p^{\alpha_{\vt,l}-\alpha_{i-1}}-p^{\alpha_{\vt,l-1}-\alpha_{i-1}})\cdot(\partial_\vt b_{\vt,i}-1)=\sum_{l=1}^{n(\vt_i)}(p^{\alpha_{\vt_i,l}}-p^{\alpha_{\vt_i,l-1}})\cdot(\partial_{\vt_i}b_{\vt_i,l}-1),
$$
which by multiplying by $p^{\alpha_{i-1}}$ becomes
\begin{equation}\label{eq: br eq}
 \sum_{l\in I_{\vt,i}}(p^{\alpha_{\vt,l}}-p^{\alpha_{\vt,l-1}})\cdot(\partial_\vt b_{\vt,i}-1)=p^{\alpha_{i-1}}\cdot\sum_{l=1}^{n(\vt_i)}(p^{\alpha_{\vt_i,l}}-p^{\alpha_{\vt_i,l-1}})\cdot(\partial_{\vt_i}b_{\vt_i,l}-1).
\end{equation}
By induction, the previous equation holds for each $i=1,\dots,m$.

We continue the proof of the theorem. If $i\neq 0$, then we note that Lemma \ref{lem: harmon simply ram} applied to the morphism $f_i:U_i\to U_{i+1}$ together with equation \eqref{eq: br eq} yields 
$$
\big(p^{\alpha_{i}-\alpha_{i-1}}-1\big)\cdot\big(2-2\cdot g(y_i)\big)=\sum_{\vt\in T_yY}\Big(\frac{1}{p^{\alpha_{i-1}}}\cdot\sum_{j\in I_{\vt,i}}(p^{\alpha_j}-p^{\alpha_{j-1}})\cdot\big(\partial_{\vt}b_{\vt,j}-1\big)\Big),
$$
 and since $g(y_i)=g(y)$ (Remark \ref{rmk: insep degree} \emph{(2)}) the theorem follows in this case by multiplying both sides by $p^{\alpha_{i-1}}$.
 
 If $i=0$, then Lemma \ref{lem: harmon sep} applied to the morphism $f_{n+1}:U_{n+1}\to f(U)$ together with equation \eqref{eq: br eq} and the fact $\deg(f_{n+1})=\fs_{f,y}$ implies
 $$
 \fs_{f,y}\cdot\big(2-2\cdot g(x)\big)-\big(2-2\cdot g(y_{n+1})\big)=\sum_{\vt\in T_yY}\Big(\frac{1}{p^{\alpha_{n}}}\cdot\sum_{j\in I_{\vt,0}}(p^{\alpha_j}-p^{\alpha_{j-1}})\cdot\big(\partial_{\vt}b_{\vt,j}-1\big)+d_{\vt}-p^{\alpha_{\vt,n(\vt)}}\Big).
 $$
 
The theorem follows by multiplying the previous equation by $p^{\alpha_n}$ and by noticing that $\fs_{f,y}\cdot p^{\alpha_n}=\deg(f,y)$ and once again that $g(y_{n+1})=g(y)$ by Remark \ref{rmk: insep degree} \emph{(2)}.
\end{proof}

\begin{rmk}
 We note that by summing the equations in Theorem \ref{thm: loc harmonicity} for $i=0,\dots,n$, and using Lemma \ref{lem: partial exists 1} we obtain the Riemann-Hurwitz formula in Theorem \ref{thm: RH}.
\end{rmk}

\bibliographystyle{plain}
\bibliography{biblio}

\end{document}